\DeclareMathOperator{\rank}{rank}
\DeclareMathOperator{\Inf}{Inf}
\numberwithin{equation}{section}
\newcommand{\U}{\mathcal{Q}}
\newcommand{\R}{\mathbb{R}}
\newcommand{\Q}{\mathbb{Q}}
\newcommand{\Z}{\mathbb{Z}}
\newcommand{\C}{\mathbb{C}}
\newcommand{\N}{\mathbb{N}}
\newcommand{\K}{\mathcal{K}}
\newcommand{\T}{\mathcal{T}}
\newcommand{\TT}{\mathbb{T}}
\newcommand{\CC}{\mathcal{C}}
\newcommand{\W}{\mathcal{W}}
\newcommand{\B}{\mathcal{B}}
\newcommand{\J}{\mathcal{J}}
\newcommand\nnfootnote[1]{%
  \begin{NoHyper}
  \renewcommand\thefootnote{}\footnote{#1}%
  \addtocounter{footnote}{-1}%
  \end{NoHyper}
}
\newtheorem{theorem}{Theorem}[section]
\newtheorem{definition}[theorem]{Definition}
\newtheorem{lemma}[theorem]{Lemma}
\newtheorem{proposition}[theorem]{Proposition}
\newtheorem{remark}[theorem]{Remark}
\newtheorem{example}[theorem]{Example}
\newtheorem{corollary}[theorem]{Corollary}
\newtheorem{conjecture}[theorem]{Conjecture}
\title[Extensions of quasidiagonal $C^*$-algebras]{Extensions of quasidiagonal $C^*$-algebras and controlling the $K_0$-map of embeddings}
\author{Iason Moutzouris}
\begin{document}
\maketitle
\nnfootnote{2020 Mathematics Subject Classification. Primary: 46L05.\par
Key words and phrases: $C^*$-algebras, extensions, quasidiagonality, Blackadar-Kirchberg Conjecture, ASH algebras.}

\begin{abstract}
    We study the validity of the Blackadar-Kirchberg conjecture for extensions of separable, nuclear, quasidiagonal $C^*$-algebras that satisfy the UCT. More specifically, we show that the conjecture for the extension has an affirmative answer if the ideal lies in a class of $C^*$-algebras that is closed under local approximations and contains all separable ASH algebras, as well as certain classes of simple, unital $C^*$-algebras and crossed products of unital $C^*$-algebras with $\Z$.
\end{abstract}
\makeatletter
\@setabstract
\makeatother

\section{Introduction}
We know that every quasidiagonal $C^*$-algebra is stably finite \cite[Prop 7.1.15]{brown2008textrm}. The converse is not true, even for exact $C^*$-algebras. A counterexample is $C^*_r(\mathbb{F}_2)$. Indeed, it is stably finite because it has a faithful trace, but it is not quasidiagonal because $\mathbb{F}_2$ is not amenable \cite[Cor 7.1.17]{brown2008textrm}. However, it is a very interesting question to ask under which extra conditions the converse holds. Because $C^*_r(\mathbb{F}_2)$ is not nuclear \cite[Thm 2.6.8]{brown2008textrm}, we should ask for a nuclearity assumption. In \cite{Blackadar1997GeneralizedIL}, Blackadar and Kirchberg conjectured that it is enough to assume nuclearity and separability.
\begin{conjecture}[Question 7.3.1, \cite{Blackadar1997GeneralizedIL}]\label{Blackadar-Kirchberg} If $A$ is separable, stably finite and nuclear, then it is quasidiagonal.
\end{conjecture}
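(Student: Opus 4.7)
The plan is to reduce the conjecture to a setting where the modern classification-theoretic machinery applies. First, I would pass to the unitization: both quasidiagonality and stable finiteness behave well under adjoining or removing a unit, so it suffices to prove the statement when $A$ is unital. Next, I would extract a faithful tracial state. By a theorem of Blackadar--Handelman, a unital, stably finite, exact C*-algebra carries a faithful lower semicontinuous quasitrace; by Haagerup's theorem, quasitraces on exact (in particular nuclear) C*-algebras are traces. So $A$ admits a faithful tracial state $\tau$, and since $A$ is nuclear, $\tau$ is automatically amenable (uniformly locally finite-dimensional in the Connes sense).

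With $(A,\tau)$ in hand, the decisive step would be to invoke the Tikuisis--White--Winter theorem: every separable, nuclear C*-algebra that satisfies the UCT and admits a faithful amenable trace is quasidiagonal. The finite-dimensional approximations produced by TWW are exactly what the definition of quasidiagonality demands, so applying their result to $(A,\tau)$ would finish the proof, \emph{provided} $A$ is known to satisfy the UCT.

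The main — and in fact currently insurmountable — obstacle is precisely this UCT hypothesis. It is not known whether every separable nuclear C*-algebra satisfies the UCT, and no technique is presently available that produces the almost-multiplicative, almost-central, finite-rank maps required for quasidiagonality without some such K-theoretic input. In this sense the Blackadar--Kirchberg conjecture in full generality is at least as hard as the UCT problem for nuclear C*-algebras, and no unconditional proof should be expected from the outline above. This is exactly the reason the present paper restricts attention to extensions: rather than confront the UCT head on, one assumes the ideal belongs to a class (such as separable ASH algebras) for which quasidiagonality is already established, imposes the UCT on the separable nuclear quotient so that TWW is available there, and then reduces the conjecture for the extension to controlling the $K_0$-map of the embedding of the ideal, which is the permanence-type problem the paper actually solves.
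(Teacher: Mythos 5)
This statement is a conjecture, not a theorem: the paper offers no proof of it and explicitly describes it as open, so there is no ``paper's own proof'' to compare against. Your proposal is honest on this point --- you do not claim to prove the conjecture, you outline the known partial strategy and correctly identify the UCT as a fundamental obstruction, and you correctly describe how the paper sidesteps the general conjecture by restricting to extensions and imposing the UCT on the quotient. In that sense your assessment of the state of the art is essentially right.

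However, one step in your outlined reduction is genuinely wrong, and it matters because it is not the UCT that blocks it. You assert that Blackadar--Handelman plus Haagerup yields a \emph{faithful} tracial state on a unital, stably finite, nuclear $C^*$-algebra. What those results give is a (quasi)tracial state; faithfulness is not automatic and in fact fails in general. The standard example is $\widetilde{\K}$, the unitization of the compacts: it is separable, nuclear, stably finite, and satisfies the UCT, yet every tracial state on it annihilates $\K$ (since $\K$ admits no bounded trace), so it has no faithful trace at all --- even though it is quasidiagonal, being AF. This is precisely why the paper's list of partial results states item (i) as ``$A$ is unital, satisfies the UCT \emph{and has a faithful trace},'' and separately cites Gabe's result for the traceless case: even granting the UCT for all nuclear algebras, the Tikuisis--White--Winter theorem does not cover the stably finite algebras whose tracial states all have nontrivial kernel. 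So the conjecture is strictly harder than ``UCT plus TWW''; your outline should flag the faithful-trace gap as a second independent obstruction rather than folding everything into the UCT problem.
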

Although the the conjecture is still open, there are some partial results confirming it. For instance, the conjecture holds when $A$ (in addition to the conjecture assumption) has either one of the following properties: i) $A$ is  simple and satisfies the UCT  (\cite [Cor. B]{tikuisis2017quasidiagonality}),ii) $A$ is traceless \cite[Cor C]{gabeembeddings} or iii) $A=B\rtimes_{\sigma}\Z$, where $B$ is an AH algebra of real rank zero \cite[Thm 1.1]{MR3915315}. \par
Because nuclearity and separability are closed under taking extensions, a positive answer on the Blackadar-Kirchberg conjecture would automatically guarantee a positive answer to the following conjecture.

\begin{conjecture}\label{extension_conjecture}
If $$\begin{tikzcd}
0 \arrow[r] & I \arrow[r, "\iota"] & A \arrow[r, "\pi"] & B  \arrow[r] & 0
\end{tikzcd}$$
is a short exact sequence, with $I,B$ separable, nuclear quasidiagonal, then $A$ is quasidiagonal iff $A$ is stably finite. 
\end{conjecture}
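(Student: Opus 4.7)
The implication quasidiagonal $\Rightarrow$ stably finite is immediate from Proposition 7.1.15 of \cite{brown2008textrm} and requires nothing about the ideal or quotient. The content is the converse, so assume $A$ is stably finite; my plan is to reduce to the Tikuisis-White-Winter theorem \cite{tikuisis2017quasidiagonality}, which guarantees quasidiagonality for every separable, nuclear, UCT $C^*$-algebra carrying a faithful amenable trace. Separability of $A$ is automatic, nuclearity is inherited from $I$ and $B$ via the standard exact-sequence argument, and --- adding the UCT hypothesis on $I$ and $B$ that the abstract indicates is in force throughout the paper --- UCT for $A$ follows from the two-out-of-three property of the bootstrap class. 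The only remaining ingredient is a faithful tracial state on $A$.

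Since $A$ is nuclear and stably finite, Blackadar-Handelman together with Haagerup's identification of quasitraces with traces produces a tracial state on $A$, but faithfulness is the subtle point: any trace pulled back from $B$ along $\pi$ necessarily vanishes on $I$, so tracial data on $I$ must be added in separately. I would fix a faithful trace $\tau_I$ on $I$ --- available for each ideal class mentioned in the abstract: ASH algebras, classifiable simple unital $C^*$-algebras, and $\Z$-crossed products of unital $C^*$-algebras --- attempt to extend it to a trace $\tilde\tau_I$ on $A$, and then convex-combine $\tilde\tau_I$ with a trace factoring through $B$ to produce a faithful trace on $A$.

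The main obstacle is the extension step: traces on ideals do not extend in general, and even when they do, the extension can behave uncontrollably on $A\setminus I$. This is exactly where the title's phrase ``controlling the $K_0$-map of embeddings'' becomes relevant: extending $\tau_I$ is governed by the behaviour of the dimension function attached to $\tau_I$ under $K_0(\iota)\colon K_0(I)\to K_0(A)$. My plan would be to use the UCT to realise $\iota$ at the level of $KK$-theory, produce a geometric model of the inclusion with prescribed, controllable action on $K_0$, and then exploit closure of the ideal class $\E$ under local approximation to reduce this model construction to building blocks of $\E$ (subhomogeneous $C^*$-algebras for the ASH case, Jiang-Su-stable models for the simple case, Pimsner-Voiculescu sequences for the $\Z$-crossed-product case). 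Producing the $K_0$-controlled $KK$-lift is where I expect the bulk of the technical difficulty to lie; the remaining pieces are conceptually a routine application of the machinery above.
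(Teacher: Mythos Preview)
The statement you are addressing is a \emph{conjecture}; the paper does not prove it in general, only the special case recorded as Theorem~\ref{main-theorem}. More importantly, the paper's route to that special case is entirely different from yours, and your proposed strategy has a genuine gap.

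The paper never attempts to produce a faithful trace on the extension $A$. Instead it invokes the Brown--Dadarlat machinery \cite{brown2004extensions}: a separable quasidiagonal ideal $I$ has the \emph{$K_0$-embedding property} if every singular subgroup $G\leq K_0(I)$ can be annihilated by an embedding $\rho\colon I\hookrightarrow C$ into some quasidiagonal $C$; this property is equivalent to the QD-extension property, which directly yields the conclusion. The phrase ``controlling the $K_0$-map of embeddings'' in the title refers to these auxiliary embeddings $\rho\colon I\hookrightarrow C$ into AF targets, constructed via Schafhauser's lifting theorem (Theorem~\ref{lifting_theorem}) so that $\rho_*$ has prescribed behaviour. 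It has nothing to do with the inclusion $\iota\colon I\to A$ or with extending traces along it.

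Your reduction to Tikuisis--White--Winter cannot succeed as stated. First, a faithful trace on $I$ is not ``available for each ideal class mentioned in the abstract'': the compact operators $\K$ are separable ASH and carry no tracial state at all, yet the paper's theorem covers $\K$ as ideal. Second, even when $I$ does admit a faithful trace, there is no mechanism---$KK$-theoretic or otherwise---for extending it to a trace on $A$; traces on ideals simply do not extend in general, and nothing in the UCT or in a $KK$-lift of $\iota$ produces positive functionals. Third, the extension $A$ can be quasidiagonal without possessing any faithful trace (again, take $I=\K$), so TWW is the wrong endpoint. The Brown--Dadarlat/$K_0$-embedding approach sidesteps all of this by working entirely with embeddings of $I$ into auxiliary algebras, never touching the trace simplex of $A$.
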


It has to be mentioned that under the aforementioned assumptions, $A$ is not automatically quasidiagonal. One easy counterexample is \par
$$\begin{tikzcd}
0 \arrow[r] & \K \arrow[r, "\iota"] & \T \arrow[r, "\pi"] & C(\TT)  \arrow[r] & 0
\end{tikzcd}$$
where $\T$ denotes the Toeplitz algebra and $\K$ the compact operators. Notice that $\K$ and $C(\TT)$ are quasidiagonal, but $\T$ is not. However, it is not stably finite either, as it contains a non-unitary isometry (the unilateral shift). Trying to verify Conjecture \ref{extension_conjecture} is the starting point for this paper. \par
In \cite{brown2004extensions}, Brown and Dadarlat, connected Conjecture \ref{extension_conjecture} with the presence of a property related to K-theory which they named \emph{$K_0$-embedding property} (see Def \ref{K_0_emb}). More specifically, if the ideal has the $K_0$-embedding property and the quotient is separable, nuclear, quasidiagonal and satisfies the UCT, then the Blackadar-Kirchberg Conjecture holds for the $C^*$-algebra in the middle (see also Remark \ref{drop_compacts}). If $A$ is a separable and quasidiagonal $C^*$-algebra, the presence of $K_0$-embedding property means that for every $G\leq K_0(A)$ with $G\cap K_0(A)^+=\{0\}$, there exists an embedding $\rho:A\hookrightarrow B$, where $B$ is quasidiagonal and $\rho_*(G)=0$. Note that $G\cap K_0(A)^+=\{0\}$ is easily seen to be a necessary condition for the existence of such an embedding. Apart from specific easy cases (see end of section 2), not much has been known regarding which $C^*$-algebras have the $K_0$-embedding property. \par
Let $\mathcal{Y}$ be the class of $C^*$-algebras that can be written as a finite direct sum of algebras belonging to either one of the following classes:
\begin{enumerate}[label=\roman*.]
     \item $D\rtimes_{\sigma} \Z$, where $D$ is separable, nuclear, unital, quasidiagonal and satisfies the UCT, $\sigma:\Z\rightarrow Aut(D)$ is a minimal action and $D$ has a $\sigma$-invariant trace.
    \item separable ASH algebras.

\end{enumerate}
In our paper, we will show that all algebras in $\mathcal{Y}$ have the $K_0$-embedding property. Combining this with other results from our paper (mainly Proposition \ref{UHF-stability} and Proposition \ref{local_approximations}) and the aforementioned comments, we deduce the following Theorem.
\begin{theorem}\label{main-theorem}
Assume that
$$\begin{tikzcd}
0 \arrow[r] & A \arrow[r, "\iota"] & E \arrow[r, "\pi"] & B  \arrow[r] & 0
\end{tikzcd}$$
is a short exact sequence where $A$ is separable, $A\otimes \U$ is locally approximated by algebras in $\mathcal{Y}$ and $B$ is separable, nuclear, quasidiagonal, satisfying the UCT. \par
Then $E$ is quasidiagonal iff it is stably finite.
\end{theorem}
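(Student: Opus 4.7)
The forward direction is the standard fact recalled in the introduction that every quasidiagonal $C^*$-algebra is stably finite, so the content is the converse. Assume $E$ is stably finite; the plan is to deduce quasidiagonality of $E$ by applying the Brown--Dadarlat criterion referenced above and amplified in Remark \ref{drop_compacts}: if the ideal of an extension has the $K_0$-embedding property and the quotient is separable, nuclear, quasidiagonal and satisfies the UCT, then the extension is quasidiagonal iff it is stably finite. Since $B$ already satisfies the quotient hypotheses by assumption, the entire theorem reduces to showing that $A$ itself has the $K_0$-embedding property.

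To reach this, I would route through $A \otimes \U$ and assemble three inputs that the paper prepares separately. First, every algebra in $\CC$ has the $K_0$-embedding property; this is asserted just before the statement of Theorem \ref{main-theorem} and is the core technical contribution of the paper, proved case by case for ASH algebras and for the crossed-product class, then extended to finite direct sums (which is immediate since the $K_0$-embedding property splits over direct summands). Second, Proposition \ref{local_approximations} should ensure that the $K_0$-embedding property passes to $C^*$-algebras locally approximated by algebras that have it; applied to the hypothesis that $A \otimes \U$ is locally approximated by algebras in $\CC$, it yields the $K_0$-embedding property for $A \otimes \U$. Third, Proposition \ref{UHF-stability} should allow one to descend the property from $A \otimes \U$ back to $A$, presumably via the standard UHF-absorption trick: an embedding $A \otimes \U \hookrightarrow B'$ with $B'$ quasidiagonal and with the prescribed subgroup of $K_0$ killed restricts along $a \mapsto a \otimes 1$ to an embedding $A \hookrightarrow B'$ whose $K_0$-map kills any given subgroup $G \leq K_0(A)$ with $G \cap K_0(A)^+ = \{0\}$, after identifying $K_0(A)$ with its image in $K_0(A \otimes \U)$ rationally.

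Chaining these three inputs gives the $K_0$-embedding property for $A$, and the Brown--Dadarlat criterion then produces quasidiagonality of $E$ from its stable finiteness. The only verification needed along the way is that the ideal $A$ is separable and nuclear, which is inherited from $A \otimes \U$ and ultimately from $\CC$ (every algebra in $\CC$ is separable and nuclear, and both properties are preserved by local approximation and by passage between $A$ and $A \otimes \U$). So the deduction of Theorem \ref{main-theorem} is essentially a clean assembly once the ingredients are in place; the main obstacle is not in this assembly but in the three supporting results it depends on — the Brown--Dadarlat criterion, and the two preservation propositions, together with the case-by-case proof that $\CC$ has the $K_0$-embedding property — with the hardest piece being the $K_0$-embedding property for crossed products $D \rtimes_\sigma \Z$, where one must both engineer an embedding with controlled $K_0$ and guarantee that the target algebra remains quasidiagonal.
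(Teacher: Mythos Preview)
Your assembly is correct and matches the paper's own proof: reduce via Remark \ref{drop_compacts} to showing $A$ has the $K_0$-embedding property, then use Proposition \ref{UHF-stability} and Proposition \ref{local_approximations} to reduce to showing it for every algebra in $\CC$, which is the technical core established in Sections 5--6.

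One correction to your commentary, though: the extension to finite direct sums is \emph{not} immediate. A singular subgroup $G \leq K_0(A_1) \oplus K_0(A_2)$ need not decompose as $G_1 \oplus G_2$ with each $G_i$ singular --- the paper flags exactly this obstruction at the start of Section 4 (e.g.\ elements like $(x,[1]_0)$ with $x$ singular in $K_0(A_1)$), and Proposition \ref{direct_sums} handling direct sums from $\widetilde{\E}$ is one of the main technical contributions, not a triviality. Relatedly, your difficulty assessment is inverted: the crossed products are dispatched fairly quickly (Proposition \ref{crossed_products} shows they lie in $\E$), while the ASH case occupies all of Section 6 and requires substantially more machinery.
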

\begin{remark}
Note that if $A$ is separable, simple, nuclear, unital, quasidiagonal and satisfies the UCT, then $A\otimes \U$ has finite nuclear dimension by the main result of \cite{MR4228503}, so by \cite[Thm. 6.2(iii)]{tikuisis2017quasidiagonality}, it is an ASH algebra. Thus $A$ satisfies the hypothesis of Theorem \ref{main-theorem}.
\end{remark}
\begin{remark}
Note that $\mathcal{Y}$ is closed under taking matrix algebras, direct sums, as well as tensoring with $\mathcal{Q}$. So, if a $C^*$-algebra $A$ satisfies the hypothesis of Theorem \ref{main-theorem}, then $A\otimes \K$ (and more generally $A\otimes D$ for every AF-algebra $D$) also satisfies the hypothesis of Theorem \ref{main-theorem}.
\end{remark}
Let $E$ be as in Theorem \ref{main-theorem}. Then it is not simple and usually it does not admit any faithful trace. For example, the latter is guaranteed if the ideal $A$ is stable. So, Theorem \ref{main-theorem} verifies the Blackadar and Kirchberg conjecture for a large class of $C^*$-algebras that have no faithful trace. Actually, even the case where the $C^*$-algebra arises as an extension of a separable, nuclear, quasidiagonal $C^*$-algebra with the UCT by $C(X)\otimes \K$, cannot be deduced in a straightforward way from any of the previous results that we could find in the literature.

\par

For the proof of Theorem \ref{main-theorem}, we need results regarding ordered K-theory from \cite{Goodearl1986PartiallyOA}, as well as techniques from \cite{SPIELBERG1988325}, in order to construct many of the $K_0$ maps. We then rely on a classification Theorem of Schafhauser \cite[Cor. 5.4]{Schafhauser2018SubalgebrasOS} to "lift" these maps to the $C^*$-algebra level, and thus achieve the existence of the embeddings needed to show the presence of $K_0$-embedding property. For the case when the ideal is a separable ASH algebra, we use results and techniques from \cite{elliott2017classification}, \cite{elliott2020decomposition} and \cite{phillips2007recursive}.

The paper is structured as follows: In Section 2, we give the definitions, some needed preliminaries and mention already known examples of $C^*$-algebras with the $K_0$-embedding property. In Section 3, we show that $K_0$-embedding Property is a local property. In Section 4, we set the stage for the last two sections, in which we produce new examples of $C^*$-algebras with the $K_0$-embedding Property. More specifically, in Section 5 we show the $K_0$-embedding property for direct sums of certain $C^*$-algebras, including simple ones and crossed products with $\Z$ via minimal actions (Proposition \ref{direct_sums}). It has to be noted that the results we show in Section 4 make us realize for which direct sums we can show the $K_0$-embedding property when applying our strategy. In Section 6, we establish $K_0$-embedding property for separable ASH algebras. There is also an appendix, on which, for the sake of completion, we present a proof of a result regarding crossed products, that is essential for the proof of Proposition \ref{crossed_products}.\par
Throughout the paper $\N^*=\{1,2,3,...\}, \U$ will be the universal UHF algebra, by $F\subset \subset A$ we will denote a finite subset of A, while $\otimes$ will be the minimal tensor product. Moreover, for an element $a$ in a $C^*$-algebra, $\bar a$ will denote an image under some quotient map. For a $C^*$-algebra $A$, $sr(A)$ and $RR(A)$ will be the stable and real rank of a $C^*$-algebra respectively. If $F$ is a set and $C$ is a $C^*$-algebra, with $F\subset_{\varepsilon} C$ we mean that for every element in $a\in F$ there is an element $b\in C$ that is $\varepsilon$-close to $a$ in norm.We will use the abbreviation ccp for completely positive and contractive maps. And finally, if $\tau\in T(A)$ is a trace and $a\in M_n(A)$, we will sometimes abuse the notation and write $\tau(a)=Tr\otimes \tau(a)=\sum_{i=1}^n \tau (a_{ii})$.

\section{Preliminaries and basic examples}
We start by giving a few definitions: \par
\begin{definition}[Def. 7.1.1, \cite{brown2008textrm}] A separable $C^*$-algebra is \textbf{quasidiagonal} if there exists a sequence of asymptotically multiplicative and asymptotically isometric ccp maps $\phi_n:A\rightarrow M_{k(n)}$.
\end{definition}

\begin{definition}[Def. 4.4, \cite{brown2004extensions}]\label{K_0_emb} We say that a separable and quasidiagonal $C^*$ algebra A has the \textbf{$K_{0}$-embedding property} if for every $ G\leq K_{0}(A)$ such that $G\cap K_{0}^{+}(A)=\{0\}$, there is a faithful *-homomorphism $\rho : A\rightarrow C$, where C is quasidiagonal and $\rho_{*}(G)=0$.
\end{definition}

\begin{definition}[Def. 4.3, \cite{brown2004extensions}] We say that a separable and quasidiagonal $C^{*}$ algebra $A$ has the \textbf{QD-extension property} if for every separable, nuclear, quasidiagonal $C^{*}$-algebra $B$ which satisfies the UCT, and for every short exact sequence
$$\begin{tikzcd}
0 \arrow[r] & A\otimes \K \arrow[r, "\iota"] & E \arrow[r, "\pi"] & B  \arrow[r] & 0
\end{tikzcd}$$
$E$ is quasidiagonal iff $E$ is stably finite.
\end{definition}

We need to add more notation:
\begin{definition} Let $(G,G^+)$ be an ordered abelian group. A subgroup $H\leq G$ is called \textbf{singular} if $H \cap G^+=\{0\}$. If $x\in G$, we will say that $x$ is \textbf{singular} if $\Z x\cap G^+=\{0\}$.\par
\end{definition}

\begin{remark}\label{drop_compacts}
 If $A$ is separable and quasidiagonal, by \cite[Prop 4.6]{brown2004extensions} it has the $K_0$-embedding property iff it has the qd extension property. Assume that $A$ is separable, quasidiagonal, has the $K_0$-embedding property and
 $$\begin{tikzcd}
0 \arrow[r] & A \arrow[r, "\iota"] & E \arrow[r, "\pi"] & B  \arrow[r] & 0
\end{tikzcd}$$
is a short exact sequence with $B$ separable, nuclear, quasidiagonal, satisfying the UCT and $E$ is stably finite. Then, if we tensor everything with $\K$, the sequence remains exact. Also $E\otimes \K$ is stably finite and the properties of $B$ pass to $B\otimes \K$. Because $A$ has the $K_0$-embedding property and hence the QD-extension property, $E\otimes \K$ is quasidiagonal hence $E$ is quasidiagonal. Thus, by the aforementioned and \cite[Thm. 4.11]{brown2004extensions}, in order to prove Conjecture \ref{extension_conjecture} (when $B$ satisfies the UCT), it is enough to show that every nuclear, separable, quasidiagonal $C^*$-algebra has the $K_0$-embedding property.
 \end{remark}

Let $A$ be a $C^*$-algebra. Then we can write $A\otimes \U$ as an inductive limit, i.e.
$$ A\otimes \U=\varinjlim A\otimes M_{n!}(\C).$$
where the connecting maps are
$$id\otimes \phi_i, \text{ where } \hspace{2mm} \phi_i:M_{i!}(\C)\rightarrow M_{(i+1)!}(\C) \text{ is defined by } \phi_i(a)=diag(a,a,...,a).$$ and $id\otimes \mu_i$, where $$\mu_i: M_{i!}(\C)\rightarrow \mathcal{Q}$$ is the inclusion from the definition of $(\U, \mu_i)=\varinjlim M_{n!}(\C)$ as an inductive limit. \par
By the stability of $K_0$ (\cite[Proposition 4.3.8]{Rrdam2000AnIT}) $K_0(A)\cong K_0(M_n(A))$. \par
By the K\"{u}nneth Theorem \cite[Thm. 23.1.2]{blackadar1998k} (or the continuity of $K_0$ \cite[Thm. 6.3.2]{Rrdam2000AnIT}), we have
\begin{equation}
    \label{2.1}
    K_0(A\otimes \U)=K_0(A)\otimes \Q.
\end{equation}
Let $x,y \in K_0(A)$ and $a,b,c,d\in \Z$. Then $$x\otimes \frac{a}{b}+ y\otimes \frac{c}{d}=x\otimes \frac{ad}{bd}+y\otimes \frac{bc}{bd}=$$ $$=adx\otimes \frac{1}{bd}+bcy\otimes \frac{1}{bd}=(adx+bcy)\otimes \frac{1}{bd}.$$ So, every (finite) sum of elementary tensors in $K_0(A)\otimes \Q$ is still an elementary tensor. Thus
$$K_0(A)\otimes \Q=\{x\otimes y \hspace{2mm} \vert \hspace{2mm} x\in K_0(A),y\in \Q \}.$$
Recall that for a $C^*$-algebra $A$, the \textbf{positive cone} of $K_0(A)$ is
$$K_0(A)^+:=\{[p]_0 \hspace{3mm} | \hspace{3mm} p\in P_{\infty}(A)\}.$$
We can put an order on $K_0(A)$ as follows:
$$x\leq y \text{ iff } y-x\in K_0(A)^+.$$
The following proposition is well-known but we present a proof for the sake of completion.
\begin{proposition}\label{k_0 of U-stable} The positive cone of $K_0(A\otimes \U)$ is $$ K_0(A\otimes \U)^+=\{x\otimes y \hspace{2mm} \vert \hspace{2mm} x\in K_0(A)^+,y\in \Q_{\geq 0}\}.$$

\begin{proof}
First of all, note that by the continuity of $K_0$ we have $$K_0(A\otimes \U)^+=\bigcup_{i=1}^{\infty}(id\otimes \mu_i)_*(K_0(A\otimes M_{n!}(\C))^+).$$
Note that $(id\otimes \mu_i)_*: K_0(A)\rightarrow K_0(A)\otimes \Q$ is the division with $i$. To see this, observe that  $(id\otimes \phi_i)_*: K_0(A)\rightarrow K_0(A)$ is the multiplication with $i+1$ and then see that the properties of the inductive limit are satisfied. Result follows.
\end{proof}
\end{proposition}

The aforementioned will help us show our first permanence property.
\begin{proposition}\label{UHF-stability}
 If A is separable and quasidiagonal then $A\otimes \U$ has the $K_0$-embedding property, iff $A$ has it.
 \end{proposition}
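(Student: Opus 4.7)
The plan is to transfer the problem to $A \otimes \U$ using the canonical unital embedding $\iota : A \hookrightarrow A \otimes \U$, $a \mapsto a \otimes 1_{\U}$, and leverage the hypothesis there. Given $G \leq K_0(A)$ with $G \cap K_0(A)^+ = \{0\}$, I would set $H := \iota_*(G) \leq K_0(A \otimes \U)$, apply the $K_0$-embedding property of $A \otimes \U$ to produce a faithful $\tilde\rho : A \otimes \U \hookrightarrow C$ into a quasidiagonal $C$ with $\tilde\rho_*(H) = 0$, and then define $\rho := \tilde\rho \circ \iota$. Injectivity of $\iota$ (forced by $1_{\U} \neq 0$) yields injectivity of $\rho$, while $\rho_*(G) = \tilde\rho_*(H) = 0$, giving exactly the embedding required by the $K_0$-embedding property for $A$.

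The only nontrivial step is to verify that $H \cap K_0(A \otimes \U)^+ = \{0\}$, so that the hypothesis actually applies to $H$. For this, suppose $\iota_*(g) \in K_0(A \otimes \U)^+$ for some $g \in G$. Using the explicit description (2.1), write $\iota_*(g) = g \otimes 1 = x \otimes y$ with $x \in K_0(A)^+$ and $y = p/q \in \Q_{\geq 0}$, $q > 0$. Clearing denominators in $K_0(A) \otimes \Q$ gives $(qg - px) \otimes 1 = 0$, which forces $qg - px$ to lie in the kernel of the rationalization $K_0(A) \to K_0(A) \otimes \Q$, i.e.\ to be torsion. Hence $Nqg = Npx$ in $K_0(A)$ for some $N > 0$; the right-hand side lies in $K_0(A)^+$ while the left lies in $G$, so $Nqg \in G \cap K_0(A)^+ = \{0\}$. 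Thus $g$ is torsion, and consequently $\iota_*(g) = g \otimes 1 = 0$.

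The main (and essentially only) obstacle is this torsion manipulation, which rests on (2.1) together with the standard fact that rationalization kills precisely the torsion subgroup. Once that is in place, everything else is a routine composition-of-maps argument.
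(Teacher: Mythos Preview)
Your proof is correct and follows exactly the same strategy as the paper: embed $A$ into $A\otimes\U$ via $\iota$, check via (2.1) that $\iota_*(G)$ remains singular, apply the hypothesis, and compose. The only difference is that you spell out the torsion argument behind the implication from (2.1), whereas the paper simply asserts that (2.1) yields $\iota_*(G)\cap K_0(A\otimes\U)^+=\{0\}$.
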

\begin{proof}
Let $\iota:A\hookrightarrow A\otimes \U$ be the natural embedding. Assume first that $A\otimes \U$ has the $K_0$-embedding Property.
Let $G\leq K_0(A)$ singular. By Proposition \ref{k_0 of U-stable}, $\iota_*(G)$ is singular. Hence, we have an embedding $\rho :A\otimes\U \hookrightarrow D$, where $D$ is quasidiagonal and $\rho_*(\iota_*(G))=0$.
After considering $\rho\circ\iota: A\hookrightarrow D$, we deduce that $A$ has the desired property. Conversely, assume that $A$ has the $K_0$-embedding property and let $G\leq K_0(A\otimes \U)$ singular. Set  $H:=\{a\in K_0(A)\hspace{2mm}| \hspace{2mm} \exists b\in \Q \text{ such that }a\otimes b \in G\}$. Note that $H$ is a singular subgroup of $K_0(A)$. By assumption there exists a quasidiagonal $C^*$-algebra $B$ and an embedding $h:A\hookrightarrow B$, such that $h_*(H)=0$. Consider the map $h \otimes id: A\otimes \U \hookrightarrow B\otimes \U$. Note that $(h \otimes id)_*(G)=0$, so $A\otimes \U$ has the $K_0$-embedding property.
 \end{proof}
 If we want to show that the $K_0$-embedding property is satisfied for a class of $C^*$-algebras, the previous proposition allows us (at least in the most cases) to assume in addition that the $C^*$-algebras are $\U$-stable.
This helps a lot because for every  separable, stably finite, unital, $\U$-stable $C^*$-algebra, the ordered $K_0$ group is well behaved. \begin{definition}[Def. 7.2.5, \cite{Rrdam2000AnIT}] \label{unperforated_def}
An ordered abelian group $(G,G^+)$ is called \textbf{unperforated} if every $x\in G$ for which $nx\geq 0$ for some $n\in \N$ satisfies $x\geq 0$.
\end{definition} 
Notice that an unperforated ordered group must be torsion free. If $A$ is $\U$-stable, then $K_0(A) \cong K_0(A)\otimes \Q$ by \ref{2.1} and $K_0(A)$ is unperforated. Indeed $A\cong A\otimes \U$, so if $nx\geq 0$, then $nx=a\otimes b$, where $a\in K_0(A)^+$ and $b\in \R_{\geq 0}$ by Proposition \ref{k_0 of U-stable}. Thus $x=a\otimes \frac{b}{n}\in K_0(A)\otimes \Q$. Hence $a\geq 0$ in $K_0(A)$. \par

For the rest of the section, all groups will be abelian, unless clearly stated otherwise.
Let $(G,G^+,u)$ be a scaled, ordered, countable group that satisfies $G\cong G\otimes \Q.$ In this case the isomorphism is on the category of ordered groups and $(G\otimes \Q)^+=\{a\otimes b\hspace{2mm} \vert \hspace{2mm} a\in G^+,b\in \Q_{\geq 0}$\}. Note that $(G\otimes \Q,(G\otimes \Q)^+)$ is indeed an ordered group by \cite [Prop. 2.3]{MR845669}. It follows that it is unperforated and hence torsion free. Let $H\leq G$ be a singular subgroup. Set
$$\mathcal{F}=\{ L\leq\ G \hspace{2mm} \vert \hspace{2mm}  L\cap G^+={0}, H\subset L \}. $$  Then $\mathcal{F}\neq \emptyset \hspace{2mm}(H\in \mathcal{F})$
and for every $(L_i)_{i\in I}$ increasing chain in $\mathcal{F}, \bigcup_{i\in I} L_{i} \in \mathcal{F}$.
Hence, Zorn's Lemma applies and $\mathcal{F}$ has a maximal element. Such subgroup will be called \emph{maximally singular}. Observe that every singular subgroup is contained inside a maximally singular subgroup.\vspace{2mm}

Let $H_0\leq G$ be a singular subgroup. Consider the following property for $H_0$:
\begin{equation}
    \label{2.2}
    \text{If there exists }k\in \N^* \text{ such that } kx\in H_0 \text{ then it follows that } x\in H_0.
\end{equation}
\begin{lemma}\label{(2.2)-holds}
If $H_0$ is maximally singular or $H_0\cong H_0\otimes \Q$, then (2.2) holds.
\begin{proof}
Assume first that $H_0$ is maximally singular. For the sake of contradiction, assume that there exist $x\in G$ and $k\in \N^*$, such that $ kx\in H_0$, but $x\notin H_0$. Consider $H_0^{'}:=span_{\Z}\{H_0,x\}\leq G$. Then $H_0^{'}\supsetneq H_0$ and also $H_0^{'}$ is singular because $kH_0^{'}\subset H_0$, contradicting the maximality of $H_0$.
Assume now that $H_0\cong H_0\otimes \Q$ and let $x\in G$ such that $kx\in H_0$. By assumption, $kx=a\otimes b$ for some $a\in H_0$ and $b\in \Q$. It follows that $x=a\otimes \frac{b}{k}\in H_0\otimes \Q\cong H_0$.
\end{proof}
\end{lemma}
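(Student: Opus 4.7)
The plan is to treat the two hypotheses on $H_0$ separately, since they give rise to genuinely different arguments, and to exploit the standing assumption $G \cong G \otimes \Q$, which forces $G$ to be torsion free. Note that in both cases the conclusion to establish is the same: any $x \in G$ with $kx \in H_0$ must already lie in $H_0$.

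For the maximally singular case, I would argue by contradiction. Suppose $x \in G \setminus H_0$ but $kx \in H_0$, and form $H_0' := \mathrm{span}_{\Z}\{H_0, x\}$. The natural move is to show $H_0'$ is still singular, which would contradict the maximality of $H_0$. The key observation is that every element of $H_0'$ has the form $h + m x$ with $h \in H_0$ and $m \in \Z$, and multiplying by $k$ lands it inside $H_0$ because $k x \in H_0$; thus $k H_0' \subseteq H_0$. Now if $y \in H_0' \cap G^+$, then $k y \in H_0 \cap G^+ = \{0\}$ (using that $G^+$ is closed under the $k$-fold sum), and torsion-freeness of $G$ forces $y = 0$. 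So $H_0'$ is a strictly larger singular subgroup, the desired contradiction.

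For the divisible case $H_0 \cong H_0 \otimes \Q$, the group $H_0$ is itself a $\Q$-vector space and in particular uniquely divisible. Thus given $kx \in H_0$, I can find $y \in H_0$ with $k y = k x$. Reading this back inside $G$, the element $y - x$ is $k$-torsion; since $G \cong G \otimes \Q$ is torsion free, $y = x$ and so $x \in H_0$.

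I do not expect any serious obstacle: the only nontrivial input beyond the definitions is the torsion-freeness of $G$ supplied by $G \cong G \otimes \Q$, and both sub-arguments are essentially one-line verifications once the correct candidate subgroup (case one) or element (case two) is named. The main thing to be careful about is not to conflate the two hypotheses on $H_0$, since a maximally singular subgroup need not be divisible and vice versa.
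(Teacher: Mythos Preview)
Your proof is correct and follows essentially the same approach as the paper. The maximally singular case is identical (you simply spell out more carefully why $kH_0' \subset H_0$ forces $H_0'$ to be singular, using torsion-freeness of $G$); in the divisible case you phrase the argument via unique divisibility of $H_0$ and torsion-freeness of $G$, whereas the paper writes the same step directly in tensor-product notation $kx = a\otimes b \Rightarrow x = a\otimes \tfrac{b}{k}$---the content is the same.
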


Let $G_0=G\slash H_0$. We can put an order on $G_0$ as follows:
\begin{equation}
    \label{2.3}
    G_0^+=\{\bar x\in G_0 \hspace{2mm} \vert \hspace{2mm} \exists y\in H_0 \text { such that } x+y\geq 0 \text{ in } G\}.
\end{equation}

\begin{lemma}\label{order_on_quotient}
$(G_0,G_0^+,\bar u)$, as defined in (2.3) is a scaled ordered group and the quotient map $\pi:G\rightarrow G_0$ is positive. If, in addition (2.2) holds, then it is unperforated. Moreover, if $H_0$ is maximally singular, then $(G_0,G_0^+,\bar u)$ is also totally ordered, hence dimension group.
\begin{proof}
This is essentially contained in the proof of \cite[Lemma 1.14]{SPIELBERG1988325}. However, for the sake of completion, we will repeat the arguments here: \par
First of all, it is straightforward to check that $\pi(x)\in G_0^+$ for every $x\in G^+.$ Moreover, we have: \par
\textbf{Claim 1:} $G_0^+ + G_0^+\subset G_0^+.$ \par
\textbf{Proof:} Let $\bar x,\bar y \in G_0^+$. By definition there exist $x_1,y_1 \in H_0$ such that $x+x_1\geq 0$ and $y+y_1\geq 0$ in $G$. Adding the two inequalities together, we get $(x+y)+(x_1+y_1)\geq 0$. Notice that $x_1+y_1 \in H_0$, so $x+y\in G_0^+$ by definition. \qed \vspace{4mm}

\textbf{Claim 2:} $G_0^+-G_0^+=G_0$. \par
\textbf{Proof:} This follows from the facts that $G^+-G^+=G$ and $\pi(G^+)\subset G_0^+$. \qed \vspace{4mm}

\textbf{Claim 3:} $G_0^+\cap -G_0^+=\{0\}$. \par
\textbf{Proof:} Let $x\in G$ such that $\bar x$ and $-\bar x \in G_0^+$. Then there exists $e,f\in H_0$ such that $x+e\geq 0$ and $-x+f\geq 0$. If $x+e>0$, then adding the two relations together yields $e+f>0$. But $e+f\in H_0$, so we have a contradiction. Thus $x+e=0$, which implies $x\in H_0$, so $\bar x=0$. \qed. \vspace{4mm}

Combining Claims 1,2 and 3, we deduce that $(G_0,G_0^+)$ is an ordered group. By the first sentence of the proof, $\pi$ is positive. Let $\bar x\in G_0$. Because $u$ is an order unit for $(G,G^+)$, there exist $n\in \N^*$ such that $-nu \leq x \leq nu $. Because $\pi$ is positive, $-n \bar u\leq \bar x \leq n \bar u$. Because $\bar x$ is arbitrary, $\bar u$ is an order unit for  $(G_0,G_0^+)$. \par
Assume now that (2.2) holds. We will show that the ordered group is unperforated. More specifically, let $n\bar x\in G_0^+$ for some positive integer $n$ and $\bar x \in G_0$. Then there is $y\in H_0$ such that $nx+y\geq 0$ in $G$. Observe that because $G\cong G\otimes \Q$, $\frac{y}{n}$ is a well-defined element of $G$. By (2.2) $\frac{y}{n} \in H_0$ and because $G$ is unperforated, $x+\frac{y}{n}\in G^+$. So, $\bar x \in G_0^+,$ thus $(G_0,G_0^+)$ is unperforated. \par
Assume in addition that $H_0$ is maximally singular. We will show that $(G_0,G_0^+)$ is totally ordered. Let $\bar x \in G_0\backslash \{0\}$. Notice that $x\notin H_0$ so the fact that $H_0$ is maximally singular yields that there exist $n\in \Z$ and $e\in H_0$ such that $nx+e>0$. If $n>0$, then $n\bar x>0$. Because $(G_0,G_0^+)$ is unperforated, $\bar x >0$. Similarly, if $n<0$, then $\bar x<0$. Hence $(G_0,G_0^+)$ is totally ordered.

\end{proof}
\end{lemma}

Notice that if (2.2) does not hold, then $G_0$ will have torsion and thus perforation. \par
Recall that a \emph{state} on an ordered group $(G,G^+,u)$ is a group homomorphism $\phi:G\rightarrow \R$ such that $\phi(u)=1$ and $\phi(G^+)\subset \R^{+}$ . The set of states is defined to be $S(G)$. We call a state $\phi$ \emph{faithful} if $\phi(x)>0 \hspace{2mm}\forall x\in G^+\backslash \{0\}$. Every (non-zero) scaled ordered group has a state by \cite[Cor. 4.4]{Goodearl1986PartiallyOA}.\par
Following the previous notation, we can see that if $\phi \in S(G) \text{ with } \phi(H_0)=0$, then
$\widetilde{\phi}: G_0\rightarrow \R$ with $\widetilde{\phi}(\bar x)=\phi(x)$ is a state on $(G_0,G_0^+,\bar u)$.
And moreover, the map
\begin{equation}
    \label{2.4}
    \Phi:\{\phi\in S(G) \hspace{2mm} \vert \hspace{2mm}\phi(H_0)=0\}\rightarrow S(G_0)
\end{equation}
with $\Phi(\phi)=\widetilde{\phi}$ is a bijection.\par
If $\tau\in T(A)$, then we have an induced state $\hat{\tau}\in S(K_0(A))$ such that $\forall p\in P_n(A)$,\par
$\hat{\tau}[p]_0=Tr\otimes \tau(p)=\sum_{i=1}^n \tau(p_{ii}).$\par
Thus, we have an induced map
\begin{equation}
    \label{2.5}
    \Psi:T(A){\rightarrow S(K_0(A))}
\end{equation}
with $\Psi(\tau)=\hat{\tau}$. \par

This map is onto if $A$ is exact by \cite[Thn. 5.11]{haagerup2014quasitraces} and \cite[Thm. 3.3]{BLACKADAR1992240}. If we also have $RR(A)=0$, \cite[Prop. 1.1.12]{rordam2002classification} and the comments below it, yield that the map is a bijection. Note that if $A$ is an AF-algebra, $RR(A)=0$.\par
 Moreover, for a scaled ordered group $(G,G^+,u)$, the group of infinitesimals is
 \begin{equation}
     \label{2.6}
     \Inf(G)=\{x\in G : \hspace{2mm}\rho(x)=0 \hspace{2mm} \forall \rho \in S(G)\}.
 \end{equation}
 If, moreover, the ordered group is unperforated, it is known that
 \begin{equation}
     \label{2.7}
     \Inf(G)=\{x\in G: \hspace{2mm} u+nx\geq 0 \hspace{2mm} \forall n\in \Z\}
 \end{equation}
 (see for instance \cite[Lemma 2.5]{dadarlat2004morphisms} if the ordered group is simple but not necessarily unperforated. In our case the proof is identical.) \par
 \vspace{4mm}
 We will need the following simple Lemma.
 \begin{lemma}\label{infinitesimals_after_homomorphisms}
 Let $A,B$ be unital and stably finite $C^*$-algebras and $\phi:A\rightarrow B$ a unital *-homomorphism. Then $\phi_*(\Inf(K_0(A)))\subset \Inf(K_0(B))$.
 \begin{proof}
 Let $x\in \Inf(K_0(A))$ and $\rho_B \in S(K_0(B),K_0(B)^+,[1_B]_0)$. Then $\rho_A:=\rho_B \circ \phi_*: K_0(A)\rightarrow \R$ is a state in $(K_0(A), K_0(A)^+, [1_A]_0)$. Note that we use the fact that $\phi(1_A)=1_B$. By (\ref{2.6}), $\rho_A(x)=0$. Hence, $\rho_B(\phi_*(x))=0$. Because $\rho_B$ is arbitrary, $\phi_*(x)\in \Inf(K_0(B))$ by (\ref{2.6}).
 \end{proof}
 \end{lemma}

 The next Lemma will be very useful in Section 5.
 \begin{lemma}\label{order_lemma}
 Let $(G,G^+,u)$ be a scaled ordered, countable abelian group with $G\cong G\otimes \Q$ via an order isomorphism. Let also $H_1\subset H_2\subset G$ such that $H_1\leq G$ is a subgroup with $H_1\cap G^+=\{0\}$ and $H_2\subset G$ is a subsemigroup with $H_2\cap -G^+=\{0\}$ and $H_2\cap -H_2=H_1$. Then there exists a state $\rho\in S(G)$ such that $\rho(H_1)=0$ and $\rho(x)\geq 0$ for every $x\in H_2$.
 \begin{proof}
 Set $G_0=G/ H_1$ and let $\pi:G\rightarrow G_0$ be the quotient map. By Lemma \ref{order_on_quotient}, $(G_0,G_0^+,\bar u)$ endowed with the order as defined in (2.3), is a scaled ordered group. Set $P=G_0^++\overline{H_2}$. We will show that $(G_0,P,\bar u)$ is a scaled ordered group. Indeed, $P-P=G_0$, because $P\supset G_0^+$. Because $H_2$, and hence $\overline{H_2}$, is a semigroup, we have that $P+P\subset P$. Let $x\in P\cap -P$. Then $x=a_1+b_1=-a_2-b_2$, where $a_i\in G_0^+$ and $b_i\in \overline{H_2}.$ Hence $$(a_1+a_2)+(b_1+b_2)=0.$$ But $a_1+a_2\in G_0^+$, while $b_1+b_2\in \overline{H_2}$. By assumption, we have that $H_2\cap -G^+=\{0\}.$ Thus $-G_0^+\cap \overline{H_2}=\{0\}$, which yields
 \begin{equation}
     \label{2.8}
     a_1+a_2=0 \text{ and } b_1+b_2=0.
 \end{equation}
 Moreover, by assumption, we have that $H_2 \cap -H_2=H_1$, which implies that $\overline{H_2} \cap -\overline{H_2}=\{0\}$. Also, $G_0^+ \cap -G_0^+=\{0\}$. So, (2.8) yields that $a_1=a_2=b_1=b_2=0$, thus $x=0$. This means that $P\cap -P=\{0\}$. It follows that $(G_0,P)$ is an ordered group. Because $\bar u$ is an order unit on $(G_0,G_0^+)$ and $P\supset G_0^+$, $\bar u$ must be an order unit on $(G_0,P)$. By \cite[Cor. 4.4]{Goodearl1986PartiallyOA}, $(G_0,P,\bar u)$, has a state, call it $\tau$. Because $P\supset G_0^+$, $\tau \in S(G_0,G_0^+)$ and $\tau(z)\geq 0$, for every $z\in \overline{H_2}$. Finally, because the map in (2.4) is onto, $\tau=\Phi(\rho)$, for some $\rho\in S(G)$. It is not difficult to show that $\rho$ satisfies the desired properties.
 \end{proof}
 \end{lemma}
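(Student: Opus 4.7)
The plan is to reduce the problem to the existence-of-states result \cite[Cor. 4.4]{Goodearl1986PartiallyOA} applied to the quotient $G/H_1$, after \emph{enlarging} its positive cone so that the image of the semigroup $H_2$ becomes nonnegative.

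First I would pass to $G_0 := G/H_1$. Since $H_1$ is singular and satisfies (2.2), Lemma \ref{order_on_quotient} already supplies a scaled ordered group structure $(G_0, G_0^+, \bar u)$ via formula (2.3). I would then coarsen this order by setting $P := G_0^+ + \bar H_2$, so that every element of $\bar H_2$ is nonnegative by construction. If $(G_0, P, \bar u)$ turns out to be a scaled ordered group, then \cite[Cor. 4.4]{Goodearl1986PartiallyOA} produces a state $\tau$ of it; because $P \supseteq G_0^+$, any such $\tau$ is automatically a state of the finer $(G_0, G_0^+, \bar u)$ that is nonnegative on $\bar H_2$. Pulling $\tau$ back through the bijection $\Phi$ of (2.4) then yields the desired $\rho \in S(G, G^+, u)$ with $\rho(H_1) = 0$ and $\rho(H_2) \subseteq [0, \infty)$.

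The only real content is therefore verifying that $(G_0, P, \bar u)$ is indeed a scaled ordered group. Three of the four axioms are essentially automatic: $P - P = G_0$ because $P \supseteq G_0^+$, $P + P \subseteq P$ because $G_0^+$ and $\bar H_2$ are both semigroups, and $\bar u$ remains an order unit since the cone only grew. The nontrivial axiom is $P \cap (-P) = \{0\}$. Given $a_1 + b_1 = -(a_2 + b_2)$ with $a_i \in G_0^+$ and $b_i \in \bar H_2$, I would split the equation by first using the hypothesis $H_2 \cap (-G^+) = \{0\}$ — which I expect descends to $\bar H_2 \cap (-G_0^+) = \{0\}$ in $G_0$ — to force $a_1 + a_2 = 0$ and $b_1 + b_2 = 0$; properness of $G_0^+$ then gives $a_i = 0$, and the hypothesis $H_2 \cap -H_2 = H_1$, which becomes $\bar H_2 \cap -\bar H_2 = \{0\}$, gives $b_i = 0$.

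The step I expect to be most delicate is precisely the claim that the two separation hypotheses on $H_2$ survive quotienting by $H_1$. Concretely, passing from $H_2 \cap (-G^+) = \{0\}$ to $\bar H_2 \cap (-G_0^+) = \{0\}$ requires chasing a representative-level identity of the form $h_1 + h_2 - y \in -G^+$, with $h_i \in H_2$ and $y \in H_1$, and using $H_1 \subseteq H_2$ together with the fact that $H_1$ is a group to conclude that $h_1 + h_2 - y$ lies in $H_2 \cap (-G^+)$ and hence is zero. Once this bookkeeping is secured, $P \cap (-P) = \{0\}$ follows and the rest of the argument is formal.
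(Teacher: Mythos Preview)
Your proposal is correct and follows essentially the same route as the paper: pass to the quotient $G_0=G/H_1$, enlarge the positive cone to $P=G_0^+ + \bar H_2$, verify $(G_0,P,\bar u)$ is a scaled ordered group (with $P\cap(-P)=\{0\}$ being the only nontrivial check), invoke \cite[Cor.~4.4]{Goodearl1986PartiallyOA} to obtain a state, and pull it back via the bijection $\Phi$ of (2.4). The paper asserts the descent of the two separation hypotheses without further comment, whereas you correctly flag and sketch the representative-level verification; otherwise the arguments coincide step for step.
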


 \begin{remark}\label{trace_kills_singular}
 Let $A$ be a separable, exact, stably finite, unital and $\U$-stable $C^*$-algebra and $G\leq K_0(A)$ a singular subgroup. Note that the comments after Definition \ref{unperforated_def} guarantee that $K_0(A)$ satisfies the hypothesis of Lemma \ref{order_lemma}. By Lemma \ref{order_lemma} for $H_1=H_2=G$ and the fact that the map in (2.5) is onto, we deduce that there exists $\tau \in T(A)$ such that $\hat{\tau}(G)=0$. If, moreover, every state in $K_0(A)$ is induced by a faithful trace (this happens for instance if $A$ is simple), then we can choose $\tau$ to be faithful.
 \end{remark}

The next proposition allows us to pass to unitizations:
\begin{proposition}\label{unitization}
If $\widetilde{A}$ has the $K_{0}$-embedding property, then $A$ has it, too.
\end{proposition}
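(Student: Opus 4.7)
The plan is to lift a singular subgroup of $K_0(A)$ to a singular subgroup of $K_0(\widetilde{A})$, apply the $K_0$-embedding property of $\widetilde{A}$ to obtain an embedding into a quasidiagonal $C^*$-algebra, and then restrict that embedding back to $A$. First, I would note that since adjoining a unit preserves both separability and quasidiagonality (any $C^*$-subalgebra of a quasidiagonal algebra is quasidiagonal, and conversely $\widetilde{A}$ is quasidiagonal whenever $A$ is), $\widetilde{A}$ is itself a separable, quasidiagonal $C^*$-algebra, so the hypothesis is meaningful. The natural inclusion $\iota:A\hookrightarrow \widetilde{A}$ gives an injection $\iota_*:K_0(A)\to K_0(\widetilde{A})$ whose image is $\ker K_0(\pi)$, where $\pi:\widetilde{A}\to \C$ is the scalar quotient.

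The key point I would establish is that $\iota_*$ reflects positivity, i.e.\ $K_0(A)^+=K_0(\widetilde{A})^+\cap \iota_*(K_0(A))$. Indeed, if an element of $\iota_*(K_0(A))$ is represented by a projection $p\in M_n(\widetilde{A})$, then membership in $\ker K_0(\pi)$ forces $[\pi(p)]=0$ in $K_0(\C)=\Z$, hence $\pi(p)=0$, so $p\in M_n(A)$ and the class already comes from $K_0(A)^+$. The reverse inclusion is immediate.

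With this in hand, suppose $G\leq K_0(A)$ is singular, i.e.\ $G\cap K_0(A)^+=\{0\}$. The positivity identification gives $\iota_*(G)\cap K_0(\widetilde{A})^+=\iota_*\bigl(G\cap K_0(A)^+\bigr)=\{0\}$, so $\iota_*(G)$ is singular in $K_0(\widetilde{A})$. By hypothesis there exists a faithful *-homomorphism $\rho:\widetilde{A}\to C$, with $C$ quasidiagonal, such that $\rho_*(\iota_*(G))=0$. Then $\rho\circ\iota:A\to C$ is still faithful (as a restriction of a faithful map), lands in a quasidiagonal algebra, and kills $G$ on $K_0$, which is exactly what the $K_0$-embedding property for $A$ demands.

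The only mild obstacle is the positivity identification $K_0(A)^+=K_0(\widetilde{A})^+\cap \iota_*(K_0(A))$; once that is recorded, the proof is purely formal and requires no structure theory specific to any class of $C^*$-algebras.
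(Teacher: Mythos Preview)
Your proof is correct and follows essentially the same approach as the paper's: both arguments hinge on the fact that the inclusion $\iota:A\hookrightarrow\widetilde{A}$ reflects positivity on $K_0$, so singular subgroups of $K_0(A)$ push forward to singular subgroups of $K_0(\widetilde{A})$, and then one composes with the embedding supplied by the hypothesis. The paper simply cites this positivity fact from R{\o}rdam's book, whereas you spell out the short argument via $\ker K_0(\pi)$; otherwise the proofs are the same.
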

\begin{proof}
Observe that $\iota :A\hookrightarrow \widetilde{A}$  ($\iota $ is the natural inclusion) has the property that $\iota_*(x)\geq 0$ iff $x\geq 0$ (see \cite[Chapter 4]{Rrdam2000AnIT} for instance), hence sends singular elements to singular elements. Then our assumption yields the result immediately.
\end{proof}

We close the section with some basic examples of $C^*$ algebras with the $K_0$-embedding property. \par
\begin{enumerate}[label=\roman*.]
    \item \emph{AF algebras} \cite[Lemma 1.14]{SPIELBERG1988325}.
    \item $A\rtimes \Z$, where $A$ is an AF algebra, provided that $A\rtimes \Z$ is quasidiagonal.\cite[Thm 5.5]{brown1998af}.

    \item Every  $C^*$-algebra $A$ for which there exists $D$ quasidiagonal with $K_0(D)$ being a torsion group and $\rho: A\hookrightarrow D$ faithful *-homomorphism. This is a direct corollary of Proposition \ref{UHF-stability}. Note that $K_0(A\otimes \U)=0$.\par
    This yields more examples like \emph{suspensions}, \emph{cones} and more generally
    \emph{exact and connective $C^*$ algebras}. This happens because if $A$ is exact and connective then by \cite[Thm A]{gabeembeddings}, $A$ embeds to the R\o rdam algebra $\mathcal{A}_{[0,1]}$ (see \cite{MR2063025} for the construction and properties of $\mathcal{A}_{[0,1]}$). Note that $ \mathcal{A}_{[0,1]}$ is quasidiagonal and has trivial K-theory.
    \item \emph{Separable, quasidiagonal, $C^*$ algebras with totally ordered $K_0$ group} (This is obvious from the definition). 
\end{enumerate}

\section{\texorpdfstring{$K_0$}{embedding Property is a local property} embedding Property is a local property}
Our main goal of the section is to show that $K_0$-embedding property is a local property. This is a very crucial result, because it will help us show that the property is satisfied by a wide variety of $C^*$-algebras, including the approximate subhomogeneous ones. Our first step is to show that in order to "kill" singular subgroups via an embedding, it is enough to do it via a sequence of asymptotically isometric and asymptotically multiplicative ccp maps. This is important, because, in order to pass from locally to globally, we need to extend maps. Although we can't in general extend *-homomorphisms, we can use Arveson's extension Theorem \cite[Thm 1.6.1]{brown2008textrm} to extend ccp maps defined on some subalgebra, to the whole algebra. But first we need the following proposition, which is almost identical to \cite[Prop. 2.5]{brown2004extensions}. \par
Recall that a $C^*$-algebra is called \textbf{MF} if there exists a faithful *-homomorphism $\phi:A\hookrightarrow \prod_{i=1}^{\infty} M_{k(n)}\slash \bigoplus_{i=1}^{\infty}M_{k(n)}$. Notice that a quasidiagonal $C^*$-algebra is MF. The converse is not true in general (for instance $C^*_r(\mathbb{F}_2)$ is MF but not quasidiagonal). However the Choi-Effros lifting Theorem \cite[Thm. 3.10]{choi-effros} implies that a nuclear, MF algebra is quasidiagonal. \par
For an extension $\eta$, $E(\eta)$ will be the $C^*$-algebra in the middle. Moreover, with $\mathcal{M}(A)$ we will denote the multiplier algebra of $A$ and we will use the notation $Q(A)=\mathcal{M(A)}\slash A$.

\begin{proposition}\label{MF-extensions}
Let $$ (\eta): \hspace{6mm} \begin{tikzcd}
0 \arrow[r] & A\otimes \K \arrow [r] & E \arrow[r] & B \arrow[r] & 0
\end{tikzcd}$$ be an extension, where $B$ is separable, nuclear and quasidiagonal, $A$ is MF and $\sigma$-unital (i.e it has a countable approximate unit) and $[\eta]=0 \in Ext(B, A\otimes \K)$. Then $E$ is MF.
\begin{proof}
Let $\rho:B\rightarrow B(H)$ be a faithful representation such that $H$ is separable, $\rho(B)\cap \K(H)=\{0\}$ and the orthogonal complement of the non-degeneracy subspace of $\rho(B)$ is infinite dimensional. Let $\tau$ be the extension from \cite[Thm 2.3]{brown2004extensions}. Then $\tau$ is both trivial and absorbing. By \cite[Lemma 2.2]{brown2004extensions}, $$E\hookrightarrow E(\eta \oplus \tau).$$ Because $[\eta]=0\in Ext(B,A\otimes \K)$ and $\tau$ is absorbing,
$$E(\eta \oplus \tau)\cong E(\tau).$$ Moreover, by looking at the statement of \cite[Thm 2.3]{brown2004extensions}, we can see that there exists an embedding 
$$E(\tau)\hookrightarrow (\rho(B)+\K(H))\otimes \widetilde{A}.$$ Thus $E\hookrightarrow (\rho(B)+\K(H))\otimes \widetilde{A}$. But $\rho(B)+\K(H)$ is quasidiagonal from \cite[Thm. 7.2.5]{brown2008textrm} and also nuclear, hence exact. So $(\rho(B)+\K(H))\otimes \widetilde{A}$ is MF by \cite[Prop 3.6]{MR3915315}. Thus $E$ is MF.
\end{proof}
\end{proposition}
Now we are ready to show the result we promised.
\begin{proposition}\label{MF-reducing}
 Assume that $A$ is separable, nuclear, quasidiagonal, and for some $G\leq K_0(A)$ singular, there exists a faithful *-homomorphism $\rho:A \hookrightarrow \prod M_{k(n)} \slash \bigoplus M_{k(n)}$ such that $\rho_*(G)=0$. Then there exists a quasidiagonal $C^*$-algebra $B$ and a faithful *-homomorphism $\phi:A\rightarrow B$, such that $\phi_*(G)=0.$
 \begin{proof}
 The proof of this proposition is essentially contained in the proof of \cite[Thm 4.11]{brown2004extensions}. However, for the sake of completion we will repeat the arguments here. Let $G$ be a singular subgroup of $K_0(A)$. Because $\oplus_{\N}C(\TT)$ satisfies the UCT, there exists a short exact sequence
 $$\begin{tikzcd}
 0\arrow{r} & A\otimes \K \arrow{r} & E \arrow{r}  & \oplus_{\N} C(\TT) \arrow{r} &0
 \end{tikzcd}$$
 satisfying $\delta_1(K_1(\oplus_{\N}C(\TT)))=G$.  Recall that $\delta_1$ is the boundary map in the K-theory six term exact sequence. 
 By assumption there exists an embedding $\rho_0:A \hookrightarrow \prod M_{k(n)} \slash \bigoplus M_{k(n)}$ such that $(\rho_0)_*(G)=0$. Let $D\subset \prod M_{k(n)} \slash \bigoplus M_{k(n)}$ be the hereditary $C^*$-subalgebra generated by $\rho_0(A)$. Then the *-homomorphism $\rho:A\otimes \K \rightarrow D\otimes \K$ satisfying $\rho(a\otimes b)=\rho_0(a)\otimes b$, is approximately unital (see \cite[Def 3.1]{brown2004extensions} for the definition of this property and the paragraphs below for an explanation of why this is true). Thus $\mathcal{M}(A\otimes \K)\subset \mathcal{M}(D\otimes \K)$ by [Ped. 3.12.12]. Hence there exists a Busby invariant $\eta_2: \oplus_{\N} C(\TT)\rightarrow Q(D\otimes \K)$ and an embedding $E\hookrightarrow E(\eta_2)$ such that the following diagram
  $$\begin{tikzcd}
 (\eta_1): \hspace{6mm} 0\arrow{r} & A\otimes \K \arrow[r, "\iota"] \arrow[hookrightarrow,d] & E \arrow{r} \arrow [hookrightarrow,d] & \oplus_{\N} C(\TT) \arrow{r} \arrow[d, "id"] &0 \\
(\eta_2):\hspace{6mm} 0\arrow{r} & D\otimes \K \arrow{r} & E(\eta_2) \arrow{r} & \oplus_{\N} C(\TT) \arrow{r} & 0
 \end{tikzcd}$$
 is commutative. Moreover, $K_1(D)=0$ by the proof of \cite[Lemma 3.2]{brown2004extensions} and $\rho_*(G)=0$ by \cite[Lemma 4.5]{brown2004extensions} and the fact that $\prod M_{k(n)} \slash \bigoplus M_{k(n)}$ has real rank zero and cancellation of projections. Hence both boundary maps on the bottom short exact sequence are zero. Indeed $K_1(D)=0$ implies $\delta_0=0$, while $\delta_1=0$ because of $\rho_*(G)=0$ and the naturality of the six term exact sequence. From the UCT and the fact that $K_i(\oplus_{\N} C(\TT))$ is a free $\Z$-module for $i=0,1$, we get that $[\eta_2]=0\in Ext(\oplus_{\N} C(\TT), D\otimes \K)$. Moreover, the fact that $A$ is separable, implies that $D \otimes \K$ is $\sigma$-unital.  So, by Proposition \ref{MF-extensions} $E(\eta_2)$ is an MF-algebra. Because MF-algebras are closed under taking subalgebras and every nuclear MF algebra is quasidiagonal, $E$ is quasidiagonal. Finally by the 6-term exact sequence, $\iota_*(G)=0$, so $A\hookrightarrow A\otimes \K \hookrightarrow E$ is the desired embedding.
 \end{proof}
 \end{proposition}

 The next step is to reduce to "killing" finitely generated singular subgroups.
 \begin{proposition}\label{f.g-reducing}
 Let $A$ be separable, unital, nuclear, quasidiagonal and assume that for every finitely generated singular subgroup $G$ of $K_0(A)$, there exists a faithful *-homomorphism $\rho: A \rightarrow B$, where B is quasidiagonal and $\rho_*(G)=0$. Then $A$ has the $K_0$-embedding property.
 \begin{proof}
 Let $G\leq K_0(A)$ be any singular subgroup. Because $G$ is countable, there is an increasing sequence of singular and finitely generated subgroups $G_n \leq K_0(A)$, such that $\bigcup_{n=1}^{\infty} G_n=G$. By assumption, there are faithful *-homomorphisms $\rho_n:A\rightarrow B_n$, where $B_n$ is quasidiagonal and $(\rho_n)_*(G_n)=0$.
 Because $B_n$ is quasidiagonal, for each $n$ there exists a sequence of ccp, asymptotically isometric and asymptotically multiplicative maps $\phi_{mn}: B_n\rightarrow M_{k_{mn}}$. Set $\psi_n=\phi_{d(n)n}$. Observe that we can take the $d(n)$'s to be large enough, so that $(\psi_n \circ \rho_n)_{n\in \N}$ is asymptotically multiplicative and asymptotically isometric. Thus if $$\rho:A\rightarrow \prod M_{l(n)} \slash \bigoplus M_{l(n)} \text{ with } \rho(x)=(\psi_n \circ \rho_n(x))_{n\in \N} (\text{where } l(n)=k_{d(n)n})$$ then  $\rho$ is a faithful *-homomorphism.
  Let now $g \in G$. Then there is $n_0 \in \N \text{ such that } \hspace{2mm} g\in G_n \hspace{2mm} \forall n\geq n_0$.
  Hence $(\rho_n)_*(g)=0$, which implies $ (\psi_n \circ \rho_n)_*(g)=0\hspace{2mm} \forall n\geq n_0$, so $\rho_*(g)=0$ in $K_0(\prod M_{l(n)}\slash \bigoplus M_{l(n)})= l^{\infty}(\Z)\slash c_{00}(\Z)$. Because $g$ is arbitrary, it follows that $\rho_*(G)=0$. Finally, Proposition \ref{MF-reducing} applies to yield that $A$ has the $K_0$-embedding property.
 \end{proof}
 \end{proposition}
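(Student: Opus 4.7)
The plan is to reduce an arbitrary singular subgroup of $K_0(A)$ to the finitely generated case handled by the hypothesis, glue the resulting data together inside a quotient of a product of matrix algebras, and then invoke Proposition \ref{MF-reducing} to land in a genuine quasidiagonal $C^*$-algebra.

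Since $A$ is separable, $K_0(A)$ is countable, so any singular subgroup $G \leq K_0(A)$ can be exhausted by an increasing sequence of finitely generated subgroups $G_1 \subset G_2 \subset \cdots$ with $\bigcup_n G_n = G$; each $G_n$ is automatically singular in $K_0(A)$. Applying the hypothesis yields faithful $*$-homomorphisms $\rho_n : A \to B_n$ with $B_n$ quasidiagonal and $(\rho_n)_*(G_n) = 0$. Quasidiagonality of each $B_n$ then provides asymptotically multiplicative and asymptotically isometric ccp maps $\phi_{n,k} : B_n \to M_{d(n,k)}$.

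A standard diagonal argument against a countable dense subset of $A$ allows me to pick $k(n)$ large enough so that, with $\psi_n := \phi_{n,k(n)}$, the composites $\psi_n \circ \rho_n : A \to M_{l(n)}$ are themselves asymptotically multiplicative and asymptotically isometric. Assembling these componentwise produces a faithful $*$-homomorphism
\[
\rho : A \to \prod_n M_{l(n)} \bigg/ \bigoplus_n M_{l(n)}, \qquad \rho(x) = \bigl[(\psi_n \circ \rho_n(x))_n\bigr].
\]
Given $g \in G$, there is $n_0$ with $g \in G_{n_0}$, so $(\rho_n)_*(g) = 0$ and hence $(\psi_n \circ \rho_n)_*(g) = 0$ for $n \geq n_0$. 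Reading this inside $K_0\bigl(\prod_n M_{l(n)}/\bigoplus_n M_{l(n)}\bigr) \cong \ell^\infty(\Z)/\bigoplus \Z$, an eventually zero sequence represents zero, so $\rho_*(g) = 0$, and therefore $\rho_*(G) = 0$.

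Proposition \ref{MF-reducing} then converts $\rho$ into a faithful $*$-homomorphism into a genuine quasidiagonal $C^*$-algebra with the same $K$-theoretic behaviour on $G$, finishing the argument. The main delicate point is the diagonal choice of $k(n)$: I need $\|\psi_n\rho_n(xy) - \psi_n\rho_n(x)\psi_n\rho_n(y)\|$ and $\bigl|\|\psi_n\rho_n(x)\| - \|x\|\bigr|$ to tend to zero simultaneously as $x,y$ range over a dense sequence in $A$, which forces a careful interleaving of the tolerances coming from the inner approximation ($\phi_{n,k}$ for $B_n$) and the outer discretisation (the finite subset of $A$ being tested at stage $n$). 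The remaining ingredients — identifying $K_0$ of the quotient and noting that eventually zero sequences descend to zero there — are essentially formal.
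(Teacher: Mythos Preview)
Your proposal is correct and follows essentially the same route as the paper's own proof: exhaust $G$ by finitely generated singular subgroups, compose the resulting faithful maps $\rho_n$ with well-chosen ccp approximants $\psi_n$ into matrix algebras, assemble these into a faithful $*$-homomorphism into $\prod M_{l(n)}/\bigoplus M_{l(n)}$, observe that each $g\in G$ is killed at the level of $\ell^\infty(\Z)/\bigoplus\Z$ because the sequence is eventually zero, and then apply Proposition~\ref{MF-reducing}. Your explicit discussion of the diagonal choice of $k(n)$ is in fact more careful than the paper's one-line ``we can take the $d(n)$'s to be large enough'', so nothing is missing.
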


 Now we have the tools to show that $K_0$-embedding property is a local property. But first we need to say explicitly what we mean by this.

 \begin{definition}[Def. 1.5, \cite{elliott2020decomposition}]Let $\CC$ be a class of $C^*$-algebras and $A$ be a $C^*$-algebra. We say that $A$ is \textbf{locally approximated by algebras in $\CC$} if for every finite subset $F\subset \subset A$ and for every $\varepsilon >0$, there exists a $C^*$-subalgebra $C\subset A$ such that $C\in \CC$ and $F\subset_{\varepsilon}C$.
 \end{definition}
 Assume that $A$ is locally approximated by algebras in $\CC$. Notice that by shrinking the class $\CC$, so that it contains only the $C^*$-algebras needed to guarantee local approximation, we may assume that $\CC$ consists only of $C^*$-subalgebras of $A$. Observe that  $\widetilde{A}$ is locally approximated by algebras in $\widetilde{\CC}$, where $\widetilde{\CC}=\{B+\C1_{\widetilde{A}} \hspace{3mm} | \hspace{3mm} B\in \CC\}$. This observation is important for managing the non-unital technicalities on the following proposition. Moreover, $A\otimes \U$ can be locally approximated by $\{\B\otimes \U \hspace{3mm} | \hspace{3mm} B\in \CC\}$. Note also that by the standard picture for $K_0$, every element of $K_0(A)$ is of the form $[p]_0-[s(p)]_0$, where $p\in P_m(\widetilde{A})$ and $s:\widetilde{A}\rightarrow \widetilde{A}$ satisfies $s(a+b\cdot 1)=b\cdot 1$ \cite[Prop. 4.2.2]{Rrdam2000AnIT}. \par
Let $h:A\rightarrow B$ be a ccp map that is $(F, \varepsilon)$ multiplicative, for some subset $F\subset P_{\infty}(A)$ and $0<\varepsilon<\frac{1}{4}$. Let $p\in F$ be a projection. Then, $||h(p)-h(p)^{2}||< \varepsilon$, hence \cite[Prop. 6.3.1]{Rrdam2000AnIT} implies that there exist $q\in P_{\infty}(B)$ such that $||h(p)-q||<2\varepsilon<\frac{1}{2}$. So, we can define $h_*([p]_0)=[q]_0$. Because every two projections that have distance $<1$ give rise to the same element in $K_0$, $h_*([p]_0)$ is well-defined. Moreover, observe that if $p_1,p_2,p_1\oplus p_2 \in F$, then $h_*([p_1]_0)+h_*([p_2]_0)=h_*([p_1\oplus p_2]_0)$. In this way, we can extend the notion of the induced $K_0$ map to sequences of asymptotically multiplicative ccp maps (for more details see, for instance, \cite[Def. 2.4 and below]{dadarlat2004morphisms}).

 \begin{proposition}\label{local_approximations}
 Let $A$ be a separable $C^*$-algebra that is locally approximated by algebras in $\CC$, where $\CC$ is a class that contains only separable, nuclear, quasidiagonal $C^*$-algebras with the $K_0$-embedding property. Then $A$ has the $K_0$-embedding property.
 \begin{proof}
 Notice that because nuclearity and quasidiagonality are both local properties, $A$ satisfies them. (For nuclearity, it is \cite[Ex. 2.3.7]{brown2008textrm}. For quasidiagonality, although we couldn't find an explicit reference, the result is well-known. Also the reader can deduce that the proof of this proposition implies that quasidiagonality is a local property). \par 
 Because of Proposition \ref{UHF-stability} and the fact that separability, nuclearity and quasidiagonality are preserved under tensoring with $\U$, we may assume that $A$ is $\U$-stable. Let $G\leq K_0(A)$ be finitely generated and singular.
 Let $$G=span_{\Z}\{g_1,g_2,...,g_m\}, \text{ where } g_i=[p^{(0)}_i]_0-[s(p^{(0)}_i)]_0, \text{ with } p_i^{(0)} \in P_w(\widetilde{A}).$$ Note that because $K_0(A)$ is torsion free and the structure theorem of finitely generated abelian groups, $G$ is a free $\Z$-module, hence the $g_i$'s are linearly independent. Because, nuclearity, quasidiagonality, separability and the ordered $K_0$ group are not affected when passing to matrix algebras, we may assume that $p^{(0)}_i \in \widetilde{A}$.  Set $$P=\{p^{(0)}_i,i=1,2,...,m\}$$ and fix an increasing sequence $P\subset F_n \subset \subset \widetilde{A}$ with $\overline{\bigcup F_n}=\widetilde{A}$, $\varepsilon_n \rightarrow 0$, $\varepsilon_n<\frac{1}{80}$ for every $n$. \par
 By assumption (and the comments before the statement of this proposition) there exist $C_n\in \CC, C_n\subset A$ such that $F_n\subset_{\varepsilon_n}\widetilde{C_n}$, where $\widetilde{C_n}=C_n+\C1_{\widetilde{A}}$. By \cite[Lemma 6.3.1]{Rrdam2000AnIT} we can find $p_{in} \in P(\widetilde{C_n})$ such that $||p_{in}-p_i^{(0)}||<1$. Moreover, we can find a sequence $(\overline{F_n})_n$ of finite subsets of $\widetilde{C_n}$ such that $p_{in}\in \overline{F_n}$ and for every $a\in F_n$ there exists $b\in \overline{F_n}: \vert\vert b-a \vert \vert <\varepsilon_n<\frac{1}{80}$.\par
 Observe that $g_i=[p_{in}]_0-[s(p_{in})]_0$ in $K_0(A)$. Let $\iota:C_n\hookrightarrow A$ be the natural embedding. Consider $L:=span_{\Z}\{l_1,l_2,...,l_m\}$, for some $l_i$ that satisfy $\iota_*(l_i)=g_i$. Such $l_i$ indeed exist because  $p_{in} \in P(\widetilde{C_n})$. Note that these need not be unique, because $\iota_*$ is not in general injective. Assume that there exists $v\in L$ such that $v>0$. Then $\iota_*(v)\geq 0$. But we have a contradiction, because $\iota_*(v)\in G$, which is singular, and $\iota_*(v)\neq 0$ by construction of $L$ and the fact that the $g_i$'s are linearly independent. Thus, we can view $G$ as a singular subgroup of $K_0(C_n)$, or even as a singular subgroup of $K_0(\widetilde{C_n})$ (see Proposition \ref{unitization}). \par
 By assumption, $C_n$ has the $K_0$-embedding Property, so there exists $\phi_n:\widetilde{C_n}\rightarrow M_{k(n)}$ ucp such that:
\begin{equation}
    \label{4.1}
    (\phi_n) \text{ is }(\overline{F_n},\varepsilon_n) \text{ multiplicative, }\vert \vert \phi_n(a)\vert \vert \geq \vert \vert a \vert \vert -\frac{10\varepsilon_n}{12} \text{ for every } a\in \overline{F_n} \text{ and } (\phi_n)_*(G)=0.
\end{equation}

 By Arveson's Extension Theorem, we can extend $\phi_n$ to $\overline{\phi}_n:\widetilde{A}\rightarrow M_{k(n)}$ ucp. Of course, $(\overline{\phi}_n)_*(G)=0$. \par
 Let $\varepsilon>0, x,y\in A$. Then there exists $n_0=n_0(\varepsilon)$ with the following two properties:
 \begin{equation}
     \label{3.2}
     \forall N\geq n_0, \varepsilon_N<\frac{\varepsilon}{24} \hspace{10mm} \text{ and }
\end{equation}
\begin{equation}
\label{3.3}
\exists x^{(0)}_n,y^{(0)}_n \in F_{n_0}\subset F_N \text{ such that }\vert \vert x^{(0)}_n-x \vert \vert <\frac{\varepsilon}{24}, \vert \vert y^{(0)}_n-y \vert \vert <\frac{\varepsilon}{24}.
\end{equation}

Moreover, there exist $x_N,y_N\in \overline{F_N}$ such that
$$||x_N-x_n^{(0)}||<\varepsilon_N \hspace{4mm}\text{and} \hspace{4mm} ||y_N-y_n^{(0)}||<\varepsilon_N.$$ Thus
\begin{equation}
    \label{3.4}
    \vert \vert x_N-x \vert \vert <\frac{\varepsilon}{12}, \vert \vert y_N-y \vert \vert <\frac{\varepsilon}{12}.
\end{equation}
Now if $N\geq n_0$, it follows that
$$ ||\overline{\phi}_N(xy)-\overline{\phi}_N(x)\overline{\phi}_N(y)||\leq$$
$$\leq||\overline{\phi}_N(x_Ny_N)-\overline{\phi}_N(x_N)\overline{\phi}_N(y_N)||+||\overline{\phi}_N(xy)-\overline{\phi}_N(x_Ny_N)||+||\overline{\phi}_N(x)\overline{\phi}_N(y)-\overline{\phi}_N(x_N)\overline{\phi}_N(y_N)||<$$
$$<\varepsilon_N+||xy-x_Ny_N||+||\overline{\phi}_N(x)\overline{\phi}_N(y)-\overline{\phi}_N(x_N)\overline{\phi}_N(y_N)||<$$
$$<\varepsilon_N+4\frac{\varepsilon}{12}+4\frac{\varepsilon}{12}<\varepsilon$$
where on the second inequality we use (3.1), on the third we use (3.4) and on the fourth we use (3.2). The fact that $\overline{\phi}$ is contractive is used on the second and third inequalities. Moreover, we have
$$||\overline{\phi}_N(x)||\geq ||\overline{\phi}_N(x_N)||-||\overline{\phi}_N(x-x_N)||\geq$$ $$\geq||x_N||-\frac{10\varepsilon}{12}-||x-x_N||\geq ||x||-\frac{\varepsilon}{12}-\frac{10\varepsilon}{12}-\frac{\varepsilon}{12}=||x||-\varepsilon.$$

Hence $(\overline{\phi}_n)_{n\in \N}$ is asymptotically multiplicative and isometrically isometric. The result yields from Proposition \ref{MF-reducing} and Proposition \ref{f.g-reducing}.
 \end{proof}
 \end{proposition}
 \begin{remark}\label{inductive_limit}
 Let $\CC$ be a class that contains only nuclear, separable, quasidiagonal $C^*$-algebras that have the $K_0$-embedding property. Let also $A=\varinjlim A_n$, where $A_n\in \CC$ for every $n$. Assume that the connecting maps are injective. Then $A$ is locally approximated by algebras in $\CC$. Thus, by Proposition \ref{local_approximations}, $A$ has the $K_0$-embedding property.
 \end{remark}

 \section{Constructing AF embeddings with control on K-theory}
 Let $A$ be a unital, separable, nuclear, $C^*$-algebra with a faithful trace that satisfies the UCT and $G\leq K_0(A)$ a singular subgroup. By \cite[Thm.A]{Schafhauser2018SubalgebrasOS} $A$ is AF-embeddable. In order to show the $K_0$-embedding property for $A$, it is enough to find (for every such $G$) an AF-algebra $B$ and an embedding $\rho:A\hookrightarrow B$ such that $\rho_*(G)$ is singular. Indeed, because AF-algebras have the $K_0$ embedding property, there exists an embedding $\phi:B\hookrightarrow D$, where $D$ is a quasidiagonal $C^*$-algebra and $\phi_*(\rho_*(G))=0$. Our desired embedding is $\phi \circ \rho$. However, constructing $B$ and $\rho$ can be very difficult. On the other hand, it is easier to construct the $K_0$ map. Then, if $A$ is "nice enough" and the $K_0$ map is chosen suitably, we can "lift" it to the $C^*$-algebra level. Our main tool is the following Theorem, which is a direct Corollary of \cite[Cor 5.4]{Schafhauser2018SubalgebrasOS}. We would like to thank Jose Carrion and Chris Schafhauser for pointing us out how can it be deduced.

 \begin{theorem}(cf. [Cor. 5.4, \cite{Schafhauser2018SubalgebrasOS}]) \label{lifting_theorem} Assume that $A,B$ are unital, separable $C^*$-algebras such that: $A$ is nuclear and satisfies the UCT, $B$ is a $\U$-stable AF algebra that has a unique trace $\tau_B$. Assume also that there exist $\sigma : K_0(A)\rightarrow K_0(B)$ positive group homomorphism and $\tau_A \in T(A)$ faithful, such that $\sigma([1_A]_0)=[1_B]_0$ and $\hat{\tau}_A=\hat{\tau}_B\circ \sigma$.
Then there exists a unital, faithful *-homomorphism $\phi:A\rightarrow B$ such that $\phi_*=\sigma$.
\begin{proof}
Let $A,B,\tau_A, \tau_B$ and $\sigma$ as in the hypothesis. Set $$P:=\{x \in K_0(B) \hspace{2mm} \vert \hspace{2mm} \hat{\tau}_B(x)>0 \}\cup \{0\}.$$ Then $(K_0(B),P,[1_B]_0)$ is a simple ordered dimension group with unique state, so by the Effros-Handelman-Shen Theorem \cite[Thm. 7.2.6]{Rrdam2000AnIT} as well as \cite[Cor. 1.5.4, Prop. 1.5.5]{rordam2002classification}, there exists a unital, separable, simple AF algebra $C$ with unique trace $\tau_C$ such that $(K_0(C),K_0(C)^+,[1_C]_0) \cong (K_0(B),P,[1_B]_0).$ via an order isomorphism $\gamma: K_0(C)\rightarrow K_0(B)$. Denote with $\beta:(K_0(C), K_0(C)^+)\rightarrow(K_0(B),K_0(B)^+)$ the map that satisfies $\beta(x)=\gamma(x)$ for every $x\in K_0(C)$. Denote also with $\alpha:K_0(A)\rightarrow K_0(C)$ the (unique) group homomorphism such that $\beta\circ \alpha= \sigma$. Let $y\in K_0(A)^+ \backslash\{0\}.$ Because $\tau_A$ is faithful, $\hat{\tau}_A(y)>0$, which implies $ \hat{\tau}_B(\sigma(y))>0$, so $ \sigma(y)\in P\backslash \{0\}$, which implies that $\gamma^{-1}\circ \sigma(y)\in K_0(C)^+\backslash \{0\}$. Because $\gamma^{-1}$ and $\beta$ are set theoretic inverses, we get $a(y)=\gamma^{-1} \circ \sigma(y)$, so $\alpha$ is a positive group homomorphism. Moreover $\alpha[1_A]_0=[1_C]_0$. By \cite[Cor. 5.4]{Schafhauser2018SubalgebrasOS}, there is a unital, faithful *-homomorphism $\rho: A\rightarrow C$ such that $\rho_*=\alpha$. Let $x\in K_0(C)\backslash \{0\}$. Then $\hat{\tau}_C(x)>0$ which implies $\hat{\tau}_B(\beta(x))>0$. Because $\hat{\tau}_B$ is the unique state in $(K_0(B), K_0(B)^+)$, it follows from \cite[Cor. 4.13]{Goodearl1986PartiallyOA} that $\beta(x)>0$. So $\beta$ is a positive group homomorphism. Moreover $\beta[1_C]_0=[1_B]_0$. By Elliott's classification of AF algebras \cite[Thm. 7.3.4]{Rrdam2000AnIT}, there exists a unital *-homomorphism $\psi:C\rightarrow B$ such that $\psi_*=\beta$. Because injective and $C$ simple, $\psi$ is automatically faithful. Thus $\phi:=\psi \circ \rho: A\rightarrow B$ is a unital, faithful *-homomorphism such that $\phi_*=\sigma.$
\end{proof}
\end{theorem}

A natural question to ask is whether $K_0$-embedding property is preserved under direct sums. \par
Let $A,B$ separable, unital, nuclear and quasidiagonal with the $K_0$-embedding property and $G\leq K_0(A\oplus B)\simeq K_0(A)\oplus K_0(B)$ be a singular subgroup. If $G=G_1\oplus G_2$, where $G_1\leq K_0(A), G_2\leq K_0(B)$ singular subgroups, then everything works fine. The problem, however, is that $G$ can be way more complicated. Let $x\in K_0(A)$ singular with the property that $mx\neq 0$ for every $m\in \N^*$. Then  $(x,-[1]_0),(x,[1]_0), (x,0)$ are all singular. Thus, if $A\oplus B$ has the $K_0$-embedding property, then there have to be $\phi_i:A\hookrightarrow B_i,i=1,2,3$, where $B_i$ is quasidiagonal, such that $(\phi_1)_*(x)>0$, $(\phi_2)_*(x)<0$ and $(\phi_3)_*(x)=0$. We will show that this indeed happens if there exists $\tau \in T(A)$ faithful such that $\hat{\tau}(x)=0$ (Proposition \ref{control_K_0}). But first we need a simple lemma.

\begin{lemma}\label{total_order}
Let $G$ be a countable abelian group that is also a $\Q$-vector space. Then there exists a total order on $G$.
\begin{proof}
 Because $G$ is countable and a $\Q$-vector space, it has a countable (Hamel)
 basis, call it $\mathcal{B}$. If the basis is finite, then $G\cong \Q^n$ for some $n\in \N$, while if the basis is countably infinite, then $G\cong c_{00}(\Q)$. In any case, we may see the elements of $G$ as sequences. We will put an order as follows: \par
 If $x=(x_n)_n$ and $y=(y_n)_n$, $x,y\in G$, then we initially look at the first coordinate. If
 $x_1<y_1$, set $x\preceq y$. If $x_1>y_1$, set $y\preceq x$. If $x_1=y_1$, we look at the
 second coordinate. If $x_2<y_2$, set $x\preceq y$. If $x_2>y_2$, set $y\preceq x$. If $x_2=y_2$,
 we look at the third coordinate, etc. In this way we have defined a total order $\preceq$ on $G$. Note that this order is heavily used and is called the lexicographic order.
\end{proof}
\end{lemma}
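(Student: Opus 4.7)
The plan is to put the total order on $G$ by exploiting the $\Q$-vector space structure: pick a Hamel basis, identify elements with their finitely-supported coordinate sequences, and declare the order lexicographically on the first coordinate where two elements differ. First I would use countability of $G$ together with the fact that $G$ is a $\Q$-vector space to extract a Hamel basis $\{e_n\}_{n\in I}$ with $I$ at most countable, so we may take $I=\{1,\dots,k\}$ or $I=\N$ and in particular $I$ comes equipped with its usual well-ordering.

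Next, each $x\in G$ has a unique expansion $x=\sum_{n\in I} q_n(x) e_n$ with $q_n(x)\in\Q$ and only finitely many $q_n(x)$ nonzero. I would set $x\succ 0$ whenever the smallest $n\in I$ with $q_n(x)\ne 0$ satisfies $q_n(x)>0$, and then define $x\preceq y$ to mean either $x=y$ or $y-x\succ 0$. Since the support of every nonzero element is finite and nonempty, the ``smallest nonzero coordinate'' is well-defined and unambiguous, which is the only point where one might worry.

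Verification that $\preceq$ is a total order is then straightforward combinatorics. Trichotomy holds because any nonzero $x$ satisfies exactly one of $x\succ 0$ or $-x\succ 0$ (look at the leading coordinate). Transitivity of $\prec$ follows by comparing leading coordinates of $y-x$ and $z-y$ and noting that the leading coordinate of $z-x$ agrees with whichever has the smaller index (or, if they tie, with their sum, which is positive in $\Q$ because $\Q$ is ordered). Antisymmetry is immediate from the definition. If one also wants the order to be compatible with the group structure --- which is what the intended application to ordered $K$-theory really requires --- then compatibility $x\preceq y \Rightarrow x+z\preceq y+z$ is automatic because $q_n$ is $\Q$-linear, so $q_n(y+z-x-z)=q_n(y-x)$ for every $n$.

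There is really no substantive obstacle here; the content of the lemma is simply that the lexicographic construction works once one has a basis, and the existence of a Hamel basis is guaranteed by Zorn (or by a direct inductive construction since $G$ is countable). The only thing one needs to be careful about is well-ordering the basis so that ``the first coordinate where $x$ and $y$ differ'' makes sense --- but $I\subseteq \N$ with the natural order settles this.
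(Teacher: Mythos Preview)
Your proposal is correct and follows essentially the same approach as the paper: both pick a countable Hamel basis, identify $G$ with finitely supported rational sequences, and impose the lexicographic order by looking at the first coordinate where two elements differ. Your write-up is in fact more careful than the paper's, since you explicitly verify trichotomy, transitivity, and translation invariance, whereas the paper simply describes the construction.
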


Now we can prove the result we promised. \par

\begin{proposition}\label{control_K_0}
Let $A$ be a separable, unital, nuclear $C^*$-algebra that satisfies the UCT. Let also $\tau \in T(A)$ be a faithful trace and $x\in K_0(A)$ such that $x$ is non-torsion and $\hat{\tau}(x)=0$. Then, there exist faithful, unit preserving *-homomorphisms $\phi_i:A\rightarrow B_i$, where $i=1,2,3, \text{ and }  B_i$ are unital AF algebras, such that $$(\phi_1)_*(x)>0, \hspace{4mm} (\phi_2)_*(x)<0 \hspace{4mm} \text{ and } \hspace{2mm} (\phi_3)_*(x)=0$$
\begin{proof}
After tensoring with the universal UHF algebra, Proposition \ref{UHF-stability} allows us to assume that $A$ is $\U$-stable. Then $K_0(A)$ is countable and also a $\Q$-vector space. Hence, by Lemma \ref{total_order}, there exists a total order on $K_0(A)$, call it $\succeq$. Fix a faithful trace $\tau \in T(A)$ and set
$$P=\{a\in K_0(A) : \hat{\tau}(a)>0\}\cup \{a\in K_0(A): \hat{\tau}(a)=0 \text{ and }  a\succeq0\}\subset K_0(A).$$ Observe that because $\succeq$ is a total order, $(K_0(A),P, [1_A]_0)$ is a scaled, totally ordered (hence dimension) group. Indeed, let $a,b \in P$. Then $\hat{\tau}(a),\hat{\tau}(b)\geq 0$. Thus $\hat{\tau}(a+b)\geq 0$. If $\hat{\tau}(a+b)>0$, then by construction $a+b\in P$. If $\hat{\tau}(a+b)=0$, we must have $\hat{\tau}(a)=\hat{\tau}(b)=0$ which implies $ a\succeq 0 \text { and } b\succeq 0$. Thus $a+b\succeq 0$, which, along with $\hat{\tau}(a+b)=0$, implies that $ a+b \in P$. Moreover, if $a \in P \cap -P$, we deduce that $ \hat {\tau}(a)\geq 0$ and $ \hat{\tau}(a)\leq 0$. Hence $\hat{\tau}(a)=0 $. So $a \in P \text{ implies } a\succeq 0$ while $a\in -P \text{ implies } 0 \succeq a$. Thus $a=0$. This shows that $P \cap -P=\{0\}$. If  $a\in K_0(A)$, then we have three cases. We either have $\hat{\tau}(a)>0$, which yields $a\in P$, or $\hat{\tau}(a)<0$, which yields $a\in -P$, or $\hat{\tau}(a)=0$. In this case, if $a\succeq 0$, then $a\in P$, while if $0\succeq a$, it follows that $a \in -P$. So, the order is total. Moreover, if $x\in K_0(A)^+\backslash \{0\}$, then, because $\tau$ is faithful, we have that $\hat{\tau}(x)>0$ which implies $ x\in P$. So $K_0(A)^+ \subset P$. Hence, $[1_A]_0$ is still an order unit and $$\beta: (K_0(A),K_0(A)^+,[1_A]_0)\rightarrow (K_0(A),P, [1_A]_0)$$ is a positive group homomorphism sending the order unit to the order unit.\par
\textbf{Claim:} \hspace{2mm} $S(K_0(A),P, [1_A]_0)=\{\hat{\tau}\}$. \par
\textbf{Proof of Claim:} \hspace{2mm} Let $\rho \in S(K_0(A),P, [1_A]_0)$ and $y\in K_0(A)$. For better notation set also $u=[1_A]_0$. Assume that $\rho(y)\neq \hat {\tau}(y)$. Assume first that $\rho(y) < \hat{\tau}(y)$. Then $\exists q\in \Q: \rho(y)<q<\hat{\tau}(y).$ Thus $$\hat{\tau}(y-qu)>0 \Rightarrow y-qu\geq 0 \Rightarrow$$ $$ y\geq qu \xRightarrow{\rho\in S(K_0(A),P^+)}\rho(y)\geq \rho(qu) \Rightarrow \rho(y)\geq q,$$ contradiction. The case $\rho(y) > \hat{\tau}(y)$ is contradicted in an identical way. Because $y$ is arbitrary, $\rho=\hat{\tau}$. \qed \par
 By the Effros-Handelman-Shen Theorem \cite[Thm 7.2.6]{Rrdam2000AnIT}, there is a unital, separable, AF algebra $B_1$ such that \par
 $(K_0(B_1),K_0({B_1})^+,[1_{B_1}]_0)\cong(K_0(A),P, [1_A]_0) \text{  via an order isomorphism  } \gamma:K_0(A)\rightarrow K_0(B_1). $ Because of the Claim and the fact that the map $\Psi$ in (2.5) is a bijection, $B_1$ has a unique trace, call it $\tau_{B_1}$ that satisfies $\hat{\tau}_{B_1}\circ \beta=\hat{\tau}$. Also $B_1$ is $\U$-stable because $K_0(A)\cong K_0(A)\otimes \Q$. Indeed,  $$(K_0(B_1),K_0(B_1)^+,[1]_0)\cong (K_0(B_1\otimes \U),K_0(B_1 \otimes \U)^+,[1]_0)$$ which implies$$  B_1\cong B_1\otimes \U$$ by Elliott's Classification Theorem for AF algebras \cite[Thm. 7.3.4]{Rrdam2000AnIT}. Furthermore, we have the following commutative diagram:

 \begin{center}
 \begin{tikzcd}
 K_0(A) \arrow {r}{\gamma \circ \beta} \arrow[swap]{dr}{\hat{\tau}} & K_0(B_1) \arrow{d}{\hat{\tau}_{B_1}} \\
 & \R
 \end{tikzcd}
 \end{center}

 and we also have $\beta[1_{A}]_0=[1_{B_1}]_0$ \par
 So by Theorem \ref{lifting_theorem}, there is a unital, faithful *-homomorphism $\phi_1 :A \hookrightarrow B_1$ such that $(\phi_1)_*=\beta$. \par
 For $x,y\in K_0(A)$ we define a new order $\preceq$ such that $x\preceq y$ iff $y \succeq x$. Notice that $\preceq$ is also a total order on $K_0(A)$. If we use $\preceq$, instead of $\succeq$ as our total order and do the same work as before, we can find $B_2$ unital, AF and $\phi_2: A\rightarrow B_2$ faithful *-homomorphism. Fix any $x\neq 0$ such that $\hat{\tau}(x)=0$. Then, out of $(\phi_1)_*(x)$ and $(\phi_2)_*(x)$, one is positive and the other negative. WLOG $x\succeq 0$. Then $(\phi_1)_*(x)>0$ and $(\phi_2)_*(x)<0$. Finally, the existence of $\phi_3$ is already known from the proof of \cite[Thm A]{Schafhauser2018SubalgebrasOS}.
\end{proof}
\end{proposition}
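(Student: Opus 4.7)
The plan is to reduce to the $\U$-stable case and then invoke Theorem \ref{lifting_theorem}. Tensoring with $\U$ and using that any faithful embedding of $A\otimes \U$ into an AF algebra restricts, via the natural inclusion $A \hookrightarrow A\otimes \U$, to one of $A$ with the same K-theoretic effect, I may assume $A \cong A\otimes \U$. Then $K_0(A)$ is a countable $\Q$-vector space, hence torsion-free and unperforated. Faithfulness of $\tau$ makes $\hat\tau$ faithful on $K_0(A)^+$ (a positive class with trace zero represents a projection of trace zero, which vanishes), and combined with $\hat\tau(x)=0$ and $x$ non-torsion this forces the $\Q$-span $\Q x \subset K_0(A)$ to be singular. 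The strategy is then to construct, for each $i\in\{1,2,3\}$, a unital $\U$-stable AF algebra $B_i$ with unique trace together with a positive unit-preserving group homomorphism $\sigma_i : K_0(A) \to K_0(B_i)$ satisfying $\hat{\tau_{B_i}}\circ \sigma_i = \hat\tau$ and sending $x$ to the required place; Theorem \ref{lifting_theorem} then lifts $\sigma_i$ to the desired faithful unital $\phi_i$.

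For $\phi_1$ (and, symmetrically, $\phi_2$) I would keep the underlying abelian group $K_0(A)$ and merely refine its positive cone. By Lemma \ref{total_order}, the countable $\Q$-vector space $K_0(A)$ admits a total order $\succeq$, which I choose so that $x \succeq 0$. Set
$$P_1 := \{y \in K_0(A) : \hat\tau(y) > 0\} \cup \{y \in K_0(A) : \hat\tau(y) = 0 \text{ and } y \succeq 0\}.$$
The main verification is that $(K_0(A), P_1, [1_A]_0)$ is a scaled, totally ordered abelian group and that $\hat\tau$ is its unique state: closure under addition, antisymmetry, and totality all split cleanly into trace-positive and trace-zero subcases, and uniqueness of state is a standard Archimedean squeeze against rational multiples of $[1_A]_0$. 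Since $\hat\tau$ is faithful on $K_0(A)^+$, one has $K_0(A)^+ \subseteq P_1$, so the identity is a positive homomorphism from the original cone into $P_1$. Effros--Handelman--Shen yields a unital AF algebra $B_1$ with $(K_0(B_1), K_0(B_1)^+, [1_{B_1}]_0) \cong (K_0(A), P_1, [1_A]_0)$, automatically with a unique trace realizing $\hat\tau$ and $\U$-stable because its $K_0$ is a $\Q$-vector space. Theorem \ref{lifting_theorem} then produces $\phi_1 : A \to B_1$ faithful unital with $(\phi_1)_* = \mathrm{id}$, and since $x \in P_1 \setminus \{0\}$ we get $(\phi_1)_*(x) > 0$. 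Reversing $\succeq$ yields $\phi_2$ with $(\phi_2)_*(x) < 0$.

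For $\phi_3$ the cleanest route is to run the same construction on the quotient $G := K_0(A)/\Q x$. Singularity of $\Q x$ together with it being a $\Q$-subspace makes condition (2.2) hold by Lemma \ref{(2.2)-holds}, so Lemma \ref{order_on_quotient} equips $G$ with a natural unperforated scaled order, on which $\hat\tau$ descends to a faithful state. Applying the total-order refinement to $G$ gives a scaled, totally ordered $\Q$-vector space with unique state, Effros--Handelman--Shen then produces a $\U$-stable unital AF algebra $B_3$ with the corresponding $K_0$, and Theorem \ref{lifting_theorem} applied to the quotient map $\sigma_3 : K_0(A) \to G \cong K_0(B_3)$ yields the desired faithful unital $\phi_3 : A \to B_3$ with $(\phi_3)_*(x) = 0$; alternatively, the existence of such a $\phi_3$ is implicit in the proof of \cite[Thm A]{Schafhauser2018SubalgebrasOS}. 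The main obstacle throughout is the ordered-group bookkeeping for the refined cones: verifying that they endow $K_0(A)$ (or $G$) with the structure of a dimension group whose unique state pulls back to $\hat\tau$ is where the $\Q$-vector space hypothesis and faithfulness of $\tau$ are both genuinely used, and without either of them the construction would collapse.
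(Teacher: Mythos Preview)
Your proposal is correct and follows essentially the same route as the paper: reduce to the $\U$-stable case, refine the cone on $K_0(A)$ lexicographically via $\hat\tau$ and a total order from Lemma~\ref{total_order}, realize the resulting dimension group by Effros--Handelman--Shen, and lift with Theorem~\ref{lifting_theorem}; reversing the auxiliary order handles $\phi_2$. The only difference is for $\phi_3$: the paper simply cites the proof of \cite[Thm~A]{Schafhauser2018SubalgebrasOS}, whereas you also sketch a self-contained construction via the quotient $K_0(A)/\Q x$ and Lemmas~\ref{(2.2)-holds}--\ref{order_on_quotient}, which is a valid and slightly more explicit alternative but not a genuinely different idea.
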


A natural question to ask is how this total order on $K_0(A)$ looks like. To get some idea, we will exhibit the following (basic) example:
\begin{example}
Let $A=\U \oplus \U$ and consider the faithful trace $\tau$ such that $\tau(a,b)=\frac{\sigma(a)+\sigma(b)}{2}$, where $\sigma$ is the unique trace on $\U$. Let's also use the following total order on $K_0(A)=\Q \oplus\Q$ (which will help us order the elements in $\ker(\hat{\tau})$): Define $(x,y)\succeq (a,b)$ iff either $x>a$ or $x=a$ and $y\geq b$. Then $P=\{(a,b) \hspace{2mm} \vert \hspace{2mm} a+b>0\} \cup \{(a,-a) \hspace{2mm} \vert \hspace{2mm} a\geq 0\}$. So if we see $\Q \oplus \Q$ as the points on the Euclidean plane with rational coefficients, then $P$ contains everything to the right of the line with equation $x+y=0$ plus the bottom part of the line.
\end{example}
The following corollary is immediate from the proof of Proposition \ref{control_K_0}, but we write it down, as it has its own independent interest, and we will also need it in Section 6.

\begin{corollary}\label{AF-embedding}
Let A be a separable, unital, nuclear $C^*$-algebra that satisfies the UCT and $\tau \in T(A)$ is faithful trace. Then there exists a unital, faithful *-homomorphism $\phi: A\rightarrow B$, where B is unital AF algebra that is $\U$-stable, and $\phi_*(x)$ is non-zero singular for every $x\in K_0(A)$ non-torsion with $\hat{\tau}(x)=0$.
\begin{proof}
Take $\phi=\phi_1 \oplus \phi_2$, where $\phi_1, \phi_2$ are as in the proof of Proposition \ref{control_K_0}.
\end{proof}
\end{corollary}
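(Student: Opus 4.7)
The plan is to take $\phi := \phi_1 \oplus \phi_2 : A \to B := B_1 \oplus B_2$, where $\phi_1 : A \to B_1$ and $\phi_2 : A \to B_2$ are the two unital, faithful $*$-homomorphisms produced inside the proof of Proposition \ref{control_K_0}. Recall that these are obtained by putting two opposite total orders $\succeq$ and $\preceq$ on $K_0(A)$ (after first reducing to the $\U$-stable case so that $K_0(A)$ becomes a countable $\Q$-vector space and Lemma \ref{total_order} applies), identifying the resulting ordered dimension groups with $K_0$ of unital, $\U$-stable AF algebras $B_1, B_2$ via Effros--Handelman--Shen, and lifting the identity on $K_0$ to $\phi_i$ via Theorem \ref{lifting_theorem}.

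First I would verify the formal properties of $B$ and $\phi$. A finite direct sum of unital AF algebras is unital AF, and a finite direct sum of $\U$-stable algebras is $\U$-stable (since $\otimes$ distributes over $\oplus$), so $B$ is unital AF and $\U$-stable. The map $\phi$ is unital because each $\phi_i$ is, and it is faithful because $\phi_1$ alone is faithful: if $\phi(a) = 0$, then in particular $\phi_1(a) = 0$, whence $a = 0$.

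The main point is the $K_0$-control. Fix a non-torsion $x \in K_0(A)$ with $\hat{\tau}(x) = 0$. The proof of Proposition \ref{control_K_0} shows that, after possibly replacing $x$ by $-x$, one has $(\phi_1)_*(x) > 0$ in $K_0(B_1)$ while $(\phi_2)_*(x) < 0$ in $K_0(B_2)$. Consequently $\phi_*(x) = ((\phi_1)_*(x),(\phi_2)_*(x))$ is non-zero, and for every $n \in \Z \setminus \{0\}$ the two coordinates of $n\phi_*(x)$ carry opposite strict signs, so $n\phi_*(x) \notin K_0(B_1)^+ \oplus K_0(B_2)^+ = K_0(B)^+$. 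Hence $\Z\phi_*(x) \cap K_0(B)^+ = \{0\}$, i.e.\ $\phi_*(x)$ is a non-zero singular element of $K_0(B)$, as required. I do not expect any real obstacle here: all the substantive work sits in Proposition \ref{control_K_0}, and the corollary amounts to packaging its two embeddings into a single direct sum.
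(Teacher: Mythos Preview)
Your proposal is correct and takes exactly the same approach as the paper: the paper's proof is the single line ``Take $\phi=\phi_1\oplus\phi_2$, where $\phi_1,\phi_2$ are as in the proof of Proposition~\ref{control_K_0},'' and you have simply unpacked why this direct sum has all the required properties. The key observation you spell out---that $\phi_1,\phi_2$ are built once and for all from a fixed total order and its opposite, so that for \emph{every} non-torsion $x$ with $\hat\tau(x)=0$ the two coordinates of $\phi_*(x)$ have opposite strict signs---is precisely what the paper is relying on.
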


From the result of Corollary 4.5, the following question arises naturally: If $A$ is a separable, quasidiagonal, nuclear, unital $C^*$-algebra and $x\in K_0(A)$ singular, when is it true that $x$ can be "killed" by a state induced by a faithful trace? If $A$ is also simple, this always happens because of Remark \ref{trace_kills_singular}. Unfortunately, in the non-simple case this fails even for a commutative $C^*$-algebra.

\begin{example}\label{counterexample}
Consider $A=C(S^2)\oplus \C$. \par
In \cite[Example 6.3.4, (d)]{blackadar1998k}, it is shown that
$$K_0(C(S^2))=\Z^2, K_0(C(S^2))^+=\{(x,y), x>0\} \cup \{(0,0)\} \text{ and } [1_{C(S^2)}]=(1,0)$$
Set $x=(0,1,-1)\in K_0(A)$. Obviously $\Z x \cap K_0(A)^+=\{0\}$. \par
Moreover, $u=(1,0,1)$ is an order unit of $K_0(A)$ and if $y=(0,1,0)$, it follows that $u+ny>0 \hspace{2mm} \forall n\in\Z$. By (2.6) and (2.7), we have that
\begin{equation}
    \label{3.1}
    \rho(y)=0 \hspace{2mm} \forall \rho\in S(K_0(A)).
\end{equation}
Assume  that $\rho_0(x)=0$ for some $\rho_0 \in S(K_0(A))$. Then (4.1) implies $$0=\rho_0(x)=\rho_0(x-y)=\rho_0(0,0,-1).$$
But $(0,0,-1)< 0$, so $\rho_0$ cannot be induced by a faithful trace. \par

\end{example}

  \section{New examples of \texorpdfstring{$C^*$}{1}-algebras with the \texorpdfstring{$K_0$}{1}-embedding Property}
 Let $\mathcal{G}$ be the class of all separable, unital, nuclear, quasidiagonal $C^*$-algebras that satisfy the UCT, and have the property that every state in $K_0(A)$ is induced by a faithful trace. \par
 Because $$T(A\otimes \U)=\{\tau \otimes \sigma \hspace{2mm} \vert \hspace{2mm} \tau \in T(A)\},$$
 where $\sigma$ is the unique trace on $\U$, it follows that $A\in \mathcal{G}$ iff $A\otimes \U \in \mathcal{G}$.

A natural question to ask is how big this class is. \par
First of all, it has to be noted that by \cite[Thm A]{Schafhauser2018SubalgebrasOS} all the algebras in the class, are embeddable (in a unit preserving way) into simple, AF algebras.
\begin{proposition}\label{easy_case}
Every $A\in \mathcal{G}$ has the $K_0$-embedding property.
\begin{proof}
Let $A\in \mathcal{G}$. After tensoring with the Universal UHF-algebra, we may assume that $A$ is $\U$-stable because of Proposition \ref{UHF-stability}. Recall that the aforementioned comments imply $A\otimes \U\in \mathcal{G}$. Fix a singular subgroup $H\leq K_0(A)$. By Lemma \ref{order_lemma} for $G=K_0(A)$, $H_1=H_2=H$ and the assumption that every state in $K_0(A)$  can be induced by a faithful trace, we deduce that there exists $\tau\in T(A)$ faithful, such that $\hat{\tau}(H)=0$ (see Remark \ref{trace_kills_singular}). Result follows from Corollary \ref{AF-embedding} and the fact that AF-algebras have the $K_0$-embedding Property.
\end{proof}
\end{proposition} 
We will now exhibit some examples of $C^*$-algebras in $\mathcal{G}.$
\begin{proposition}\label{connected}
If $X$ is a separable, compact, Hausdorff and connected topological space, then $C(X)\in \mathcal{G}$.
\begin{proof}
By \cite[Ex 6.10.3 and Cor 6.3.6]{blackadar1998k}, $K_0(C(X))$ is a simple ordered group with unique state, call it $\rho$. Because the map in (2.5) is onto, $\rho$ is induced by (any) faithful trace in $C(X)$. Finally it is well-known that $C(X)$ is nuclear, quasidiagonal and satisfies the UCT. Hence $C(X)\in \mathcal{G}$.
\end{proof}
\end{proposition}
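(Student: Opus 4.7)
The plan is to verify each defining property of $\E$ for $A = C(X)$. That $C(X)$ is unital, separable, nuclear and satisfies the UCT is standard: compactness of $X$ gives unitality, the separability hypothesis on $X$ (which makes $X$ metrizable and $C(X)$ separable) takes care of separability, and commutative $C^*$-algebras are nuclear and lie in the bootstrap class. Quasidiagonality of $C(X)$ is classical---one can invoke Voiculescu's theorem on commuting normals, or simply note that $C(X)$ is Type I and stably finite. Thus the real content is to show that every state on $K_0(C(X))$ is induced by a faithful trace.

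The key ingredient is the ordered $K$-theory of $C(X)$ for connected $X$: by \cite[Ex.~6.10.3 and Cor.~6.3.6]{blackadar1998k}, $K_0(C(X))$ is a simple ordered group with a unique state $\rho$. Intuitively this is because a projection in $M_n(C(X))$ corresponds to a complex vector bundle over the connected space $X$, and hence has a constant positive integer rank, so that the rank map is the one and only state. Since $C(X)$ is nuclear, and in particular exact, the map $\Psi : T(C(X)) \to S(K_0(C(X)))$ from (2.5) is surjective; as its codomain is the singleton $\{\rho\}$, every trace on $C(X)$ automatically induces $\rho$.

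It therefore suffices to produce a single faithful trace on $C(X)$. Since $X$ is separable, compact and Hausdorff, it is metrizable and admits a regular Borel probability measure $\mu$ of full support; the functional $\tau(f) := \int_X f \, d\mu$ is then a faithful tracial state on $C(X)$, which by the previous paragraph induces $\rho$. Hence $C(X) \in \E$. There is no real obstacle here: once the ordered $K$-theoretic description of $C(X)$ is accepted, the proposition is essentially formal, and the only point requiring some care is extracting the faithful trace from the metrizability hypothesis.
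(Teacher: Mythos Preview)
Your proof is correct and follows essentially the same line as the paper's: cite Blackadar to get that $K_0(C(X))$ is a simple ordered group with a unique state, then use surjectivity of the map $\Psi$ in (2.5) together with the existence of a faithful trace on $C(X)$ to conclude. You simply supply more detail than the paper does (quasidiagonality, the explicit construction of a faithful trace via a full-support measure), but the argument is identical; one small quibble is that ``separable compact Hausdorff'' does not literally force metrizability, though your construction of a faithful trace goes through from separability alone.
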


\begin{proposition}\label{simple}
If A is separable, nuclear, unital, simple, satisfies the UCT and has a trace, then $A\in\mathcal{G}$.
\begin{proof}
First, notice that by \cite [Cor. B]{tikuisis2017quasidiagonality}, $A$ is automatically quasidiagonal. Because the map in (2.5) is onto, every state in $K_0(A)$ can be induced by a trace. But $A$ is simple, so all traces are automatically faithful. Result follows.
\end{proof}
\end{proposition}
\begin{proposition}\label{crossed_products}
Let $A$ be unital, separable, nuclear, satisfies the UCT and $\sigma:\Z\rightarrow Aut(A)$ be a minimal action. Assume that $A$ has a $\sigma$-invariant trace. Then $A\rtimes_{\sigma} \Z \in \mathcal{G}.$
\begin{proof}
$A\rtimes_{\sigma} \Z$ is unital and separable. By \cite[Thm 4.2.4]{brown2008textrm} $A\rtimes_{\sigma} \Z \simeq A\rtimes_{\sigma,r} \Z$ and $A\rtimes_{\sigma} \Z$ is nuclear. Because $A$ has a $\sigma$-invariant trace, $A\rtimes_{\sigma} \Z$ is quasidiagonal by \cite[Cor 6.5]{Schafhauser2018SubalgebrasOS}. Because the bootstrap class in closed under taking crossed products with $\Z$ \cite[22.3.5]{blackadar1998k}, $A\rtimes_{\sigma} \Z $ satisfies the UCT. Let now $\rho\in S(K_0(A\rtimes_{\sigma} \Z))$. Because $A\rtimes_{\sigma} \Z$ is nuclear hence exact, the map in (2.5) is onto, so there is $\tau \in T(A\rtimes_{\sigma} \Z)$ such that $\hat{\tau}=\rho.$ The restriction $\tau |_{A}$ is an invariant trace. Notice that $N_{\tau|_{A}}=\{x\in A \hspace{2mm} \vert \hspace{2mm} \tau|_A(x^*x)=0\}$ is a $\sigma$-invariant ideal. Because the action is minimal, $N_{\tau|_A}=0$, so $\tau|_A$ is faithful. Hence $\tau|_A \circ E \in T(A\rtimes_{\sigma} \Z)$, where $E:A\rtimes_{\sigma} \Z \rightarrow A$ is the conditional expectation that sends $\sum_{g\in \Z}a_g g$ to $a_0$, is also faithful. From the comments in \cite[p.84]{blackadar1998k}, we have that $\widehat{\tau|_A \circ E}=\rho$ (see appendix for a detailed proof of this). Thus $A\rtimes_{\sigma} \Z \in \mathcal{G}.$
\end{proof}
\end{proposition}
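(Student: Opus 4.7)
The plan is to verify all six defining conditions for membership in $\E$ one by one, with the last one (every state on $K_0$ is induced by a faithful trace) being the heart of the matter. The structural conditions should fall out of well known permanence properties: $A\rtimes_{\sigma}\Z$ is separable and unital because $A$ is and $\Z$ is countable discrete; it is nuclear because $\Z$ is amenable, so the full and reduced crossed product agree and nuclearity passes through the reduced crossed product by an amenable group (e.g.\ \cite[Thm 4.2.4]{brown2008textrm}); it satisfies the UCT because the bootstrap class is closed under crossed products by $\Z$. For quasidiagonality, I would invoke the fact (which Schafhauser establishes in the cited paper) that crossed products of unital separable nuclear UCT $C^*$-algebras with $\Z$ by actions admitting an invariant trace are quasidiagonal, which fits our hypotheses exactly.

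The real content is the state condition. Fix $\rho \in S(K_0(A\rtimes_{\sigma}\Z))$. Since $A\rtimes_\sigma\Z$ is exact, the map $\Psi$ from traces to states in (2.5) is surjective, so $\rho = \hat\tau$ for some $\tau \in T(A\rtimes_\sigma \Z)$. This $\tau$ need not be faithful, so one has to modify it without changing the induced state. The natural idea is to restrict to $A$ and then push back through the canonical conditional expectation $E:A\rtimes_\sigma\Z \to A$ defined by $\sum_{g\in\Z} a_g g \mapsto a_0$. Set $\tau_0 = \tau|_A$; this is automatically $\sigma$-invariant because $\tau$ is tracial and the unitary implementing $\sigma$ lies in the crossed product. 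Now one considers the new trace $\tau_0\circ E$ on $A\rtimes_\sigma\Z$.

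The point at which minimality enters is the faithfulness of $\tau_0$: the set $N = \{x\in A : \tau_0(x^*x)=0\}$ is a closed two-sided ideal in $A$, and, because $\tau_0$ is $\sigma$-invariant, $N$ is $\sigma$-invariant as well. Minimality of $\sigma$ forces $N=\{0\}$, so $\tau_0$ is faithful on $A$; this in turn makes $\tau_0\circ E$ faithful on $A\rtimes_\sigma\Z$, since $E$ is a faithful conditional expectation (positivity and injectivity on positive elements are standard).

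The remaining check, and the only subtle bookkeeping step, is that $\widehat{\tau_0\circ E} = \rho$. Projections in $M_n(A\rtimes_\sigma\Z)$ need not live in $M_n(A)$, but when one computes $(\tau_0\circ E)$ on a projection $p$ one only sees the diagonal Fourier coefficient in $A$, and a standard fact (used in \cite[p.~84]{blackadar1998k}) is that for any trace $\tau$ on a crossed product, $\hat\tau$ depends only on $\tau|_A\circ E$; equivalently, $\hat\tau = \widehat{\tau|_A\circ E}$ on $K_0$. Applying this to our $\tau$ gives $\widehat{\tau_0\circ E} = \hat\tau = \rho$, so $\rho$ is induced by the faithful trace $\tau_0\circ E$, completing the verification that $A\rtimes_\sigma\Z\in\E$. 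The main obstacle I anticipate is exactly this last identification: one needs the $K_0$-level pairing to be insensitive to replacing $\tau$ by $\tau|_A\circ E$, and this is where the tracial and $E$-compatibility properties have to mesh precisely.
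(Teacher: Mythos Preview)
Your proposal is correct and follows essentially the same route as the paper's proof: verify the structural properties via standard permanence results (including Schafhauser's quasidiagonality theorem), lift a given state to a trace via exactness/nuclearity, use minimality to force faithfulness of $\tau|_A$, and then replace $\tau$ by $\tau|_A\circ E$ using the fact that both induce the same state on $K_0$. The only cosmetic difference is that the paper cites nuclearity rather than exactness for the surjectivity of $\Psi$, and leaves implicit (as you do) that $N\neq A$ since $\tau|_A$ is a state.
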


If the action $\sigma$ is trivial, then it is minimal iff $A$ is simple. In this case, $A\rtimes \Z=A\otimes C(\TT)$. Thus, if $A$ is simple, $A\otimes C(\TT)\in \mathcal{G}.$ Actually, in order to deduce that the tensor product is in $\mathcal{G}$, we can weaken the simplicity condition for $A$, and assume $A\in \mathcal{G}$ instead.
\begin{proposition}
Let $A\in \mathcal{G}$. Then $A\otimes C(\TT)\in \mathcal{G}.$
\begin{proof}
Assume that $A\in \mathcal{G}.$ There is a split exact sequence $$\begin{tikzcd}
0\arrow[r]& SA\arrow[r, "\iota"] & A\otimes C(\TT) \arrow[r, "\pi"] & A \arrow[r] &0
\end{tikzcd},$$
where we identify $A\otimes C(\TT)$ with $C(\TT,A)$, $\iota$ is the inclusion map and $\pi(f)=f(1)$.
So, $K_0(A\otimes C(\TT))\cong K_0(A)\oplus K_1(A)$. By \cite[Prop 5.7]{MR3915315}, we have that $([1_A]_0,b)\in K_0(A\otimes C(\TT))^+$ for every $b\in K_1(A)$. Hence $(0,b)\in \Inf K_0(A\otimes C(\TT))$ for every $b\in K_1(A)$. Let $\rho\in S(K_0(C(\TT)\otimes A)).$ Then $\rho(a,b)=\rho_0(a)$, where $\rho_0 \in S(K_0(A))$. By assumption, $\rho_0=\hat{\tau_0}$, where $\tau_0$ is a faithful trace on $A$. Observe that $\rho=\hat{\tau}$, where $\tau=\tau_0 \otimes \sigma$ and $\sigma$ is (any) faithful trace on $C(\TT)$. So $\tau$ is faithful and hence $A\otimes C(\TT)\in \mathcal{G}$.
\end{proof}
\end{proposition}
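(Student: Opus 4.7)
My plan is to mirror the structure of Proposition 5.5 above for the crossed product case, treating the tensor product with $C(\TT)$ as a benign operation on the order structure of $K_0$. The structural requirements for membership in $\E$ transfer almost for free: since $C(\TT)$ is separable, unital, nuclear and lies in the bootstrap class, $A \otimes C(\TT)$ inherits separability, unitality, nuclearity and the UCT from $A$; quasidiagonality passes to the tensor product because $A$ is nuclear and both tensor factors are quasidiagonal. So the only non-trivial condition to verify is that every state on $K_0(A \otimes C(\TT))$ comes from a faithful trace.

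For the $K$-theoretic computation I would identify $A \otimes C(\TT)$ with $C(\TT, A)$ and use the split exact sequence
\[
0 \to SA \to A \otimes C(\TT) \to A \to 0,
\]
with splitting given by $a \mapsto a \otimes 1$, to write $K_0(A \otimes C(\TT)) \cong K_0(A) \oplus K_1(A)$. The key order-theoretic input is \cite[Prop 5.7]{MR3915315}, asserting that $([1_A]_0, b) \in K_0(A \otimes C(\TT))^+$ for every $b \in K_1(A)$. Applied to $nb$ for all $n \in \Z$, this gives $[1_{A \otimes C(\TT)}]_0 + n(0,b) \geq 0$, so by (2.7) the entire summand $\{0\} \oplus K_1(A)$ sits inside $\mathrm{Inf}(K_0(A \otimes C(\TT)))$.

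For the trace-lifting step, any state $\rho$ on $K_0(A \otimes C(\TT))$ must kill infinitesimals and therefore factors as $\rho(a,b) = \rho_0(a)$ for a unique state $\rho_0 \in S(K_0(A))$. The hypothesis $A \in \E$ furnishes a faithful trace $\tau_0 \in T(A)$ with $\hat{\tau_0} = \rho_0$. I would then take $\tau := \tau_0 \otimes \sigma$, where $\sigma$ is any faithful trace on $C(\TT)$ (for example normalized Lebesgue integration). The minimal tensor product of faithful traces, with $A$ nuclear, remains faithful, and a direct check on elementary tensors of projections together with the fact that $(0,b)$ is infinitesimal gives $\hat{\tau} = \rho$, completing the verification that $A \otimes C(\TT) \in \E$.

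The only real subtlety is the positivity statement $([1_A]_0, b) \geq 0$ for all $b \in K_1(A)$; without this input, states on $K_0(A \otimes C(\TT))$ could in principle detect the $K_1(A)$ summand, and the candidate trace $\tau_0 \otimes \sigma$ would no longer suffice to realize an arbitrary $\rho$. Everything else is routine bookkeeping around the Künneth decomposition and the axioms of $\E$.
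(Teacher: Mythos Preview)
Your proposal is correct and follows essentially the same route as the paper's proof: the same split exact sequence and K\"unneth decomposition, the same citation of \cite[Prop 5.7]{MR3915315} to force $\{0\}\oplus K_1(A)$ into the infinitesimals, and the same lift of a state via $\tau_0\otimes\sigma$. You have simply spelled out a few points (the structural hypotheses of $\E$, and the use of (2.7) to pass from $[1]+n(0,b)\geq 0$ to infinitesimality) that the paper leaves implicit.
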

\vspace{5mm}

On the other hand, notice that non-trivial direct sums of unital, quasidiagonal $C^*$-algebras cannot be on $\mathcal{G}$. \vspace{5mm}

So, it is natural to set $$ \mathcal{O}=\{\oplus_{i=1}^{n}A_i \hspace{2mm} \vert \hspace{2mm} n\in \N^* \text{ and for every } i,\text{ either } A_i \in \mathcal{G} \text{ or }A_i \text{ is an AF algebra}\}.$$

Corollary \ref{AF-embedding} gives us an indication that the elements of this class should have the $K_0$ embedding property. We will show that this indeed happens.
\begin{proposition}\label{direct_sums}
If $A\in \mathcal{O}$, then $A$ has the $K_0$-embedding property.
\begin{proof}
Let $A\in \mathcal{O}$. Then $A=\oplus_{i=1}^{n} A_i$, for some $A_i\in \mathcal{G}$ or AF algebra. After tensoring with the Universal UHF-algebra,  Proposition \ref{UHF-stability}, allows us to assume that $A_i$ is $\U$-stable for every $A_i$. Recall that $K_0(A)=\oplus_{i=1}^n K_0(A_i)$. Let $G\leq K_0(A)$ singular. Because $A_i$ is separable, $K_0(A_i)$ is countable. Because of Proposition \ref{unitization} and its proof we may assume that if $A_i$ is an AF-algebra, then it is unital. Because every singular subgroup is contained in a maximally singular subgroup, we may assume that $G$ is maximally singular. Because of Lemma \ref{(2.2)-holds}, we have that $G$ satisfies (2.2). \par
We want to find an embedding $\phi$ of $A$ into an AF algebra $B$ such that $\phi_*(G)$ is singular. The easiest way to do this, is to find embeddings $\phi_i:A_i\hookrightarrow B_i$ and take $\phi$ to be their direct sum. Because it is not easy to construct the AF-algebras $B_i$ explicitly, we will first construct the maps on the $K_0$-level and then use Theorem \ref{lifting_theorem} to "lift" to the $C^*$-algebra level. Note that if some $A_i$ is an AF algebra, we might not be able to use Theorem \ref{lifting_theorem}, as it is even possible that $A_i$ does not have any faithful trace. However, in this case every positive group homomorphism $\pi_i: K_0(A_i)\rightarrow K_0(B_i)$ with $\pi_i[1]=[1]$, lifts to the $C^*$-algebra level (recall that $A_i$ is an AF-algebra in this case). So, we need to find positive group homomorphisms $\pi_i:K_0(A_i,K_0(A_i)^+,[1]_0)\rightarrow (H_i,P_i,u_i)$ with $\pi_i([1])=u_i$ and $(H_i,P_i)$ dimension groups. One way to construct a dimension group, is to construct a totally ordered group. We will also want to secure that $(H_i,P_i,u_i)$ has a unique state. Finally, we want to define traces $\tau_i\in T(A_i)$ such that the unique state on $(H_i,P_i,u_i)$ is the composition of $\hat{\tau}_i$ with $\pi_i$. \par
For every $i=1,2,...,n$ set $$G_i^{zero}=\{x_i \in K_0(A_i) \text{ such that }(0,0,...,x_i,0,...,0)\in G\}.$$
We have that $\ G_i^{zero}\cap K_0(A_i)^+=\{0\}$ and also $G_i^{zero}$ satisfies (2.2) because $G$ satisfies (2.2). Thus, if $H_i=K_0(A_i)\slash G_i^{zero}$ is endowed with the order as defined in (2.3), $(H_i,H_i^+,\overline{[1_{A_i}]}_0)$ is a scaled ordered, unperforated group by Lemma \ref{order_on_quotient} and $H_i\cong H_i\otimes \Q$. The latter holds because $K_0(A_i)\cong K_0(A_i)\otimes \Q$ and $G_i^{zero}\cong G_i^{zero}\otimes \Q$.\par
If we want to achieve our goal, we must kill the elements of $G_i^{zero}$.\par

Let $\pi_i:K_0(A_i)\rightarrow H_i$ be the quotient map and $$H=\oplus_{i=1}^n H_i, H^+=\oplus_{i=1}^n H_i^+, \pi=\oplus_{i=1}^n\pi_i:K_0(A)\rightarrow H.$$ \par
\vspace{5mm}
Our first claim allows us to make sure that after moving to the quotient our group is still maximally singular and there are no more nonzero elements of the form $(0,0,...,a,0,...,0)$. \par
\textbf{Claim 1:} $\pi(G)$ is maximally singular and if  $y_i\in H_i$ with $(0,...,0,y_i,0,...,0)\in \pi(G)$, then $y_i=0$.\par
\textbf{Proof of Claim 1:} Assume that there exists $y\in H^+\cap \pi(G)$ with $ y\neq 0$. Then
\begin{equation}
    \label{5.1}
    y=\pi(x) \text{ for some }x=(x_1,...,x_n)\in G.
\end{equation}
Because $y\in H^+$, for each $i=1,2,...,n$ there exists \begin{equation}
    \label{5.2}
    r_i\in G_i^{zero} : x_i+r_i \geq 0.
\end{equation}
Because $y\neq 0$, there exists $i_0$ such that
\begin{equation}
    \label{5.3}
    x_{i_0}+r_{i_0}>0.
\end{equation}
Because $ r_i\in G_i^{zero},$
\begin{equation}
    \label{5.4}
     (0,0...,0,r_i,0,...,0)\in G.
\end{equation}
(5.1),(5.2),(5.3),(5.4) yield that $$0<(x_1+r_1,...,x_n+r_n)\in G.$$
contradicting the fact that $G$ is singular. Hence, $\pi(G)$ is singular.\par
To show maximality assume for the sake of contradiction that \begin{equation}
    \label{5.5}
    L\supsetneq \pi(G), L\cap H^+=\{0\}.
\end{equation}
This yields that
\begin{equation}
\label{5.6}
\pi^{-1}(L)\cap K_0(A)^+=\{0\}.
\end{equation}
Indeed, if there exists $z\in \pi^{-1}(L),z>0$, then we have that $\pi(z)\in L$ and $\pi(z)\geq 0$, However, if $\pi(z)=0$, then $z=(z_1,...,z_n)$ for some $z_i\in G_i^{zero}$, which contradicts the fact that $z>0$. So, $\pi(z)>0$ which contradicts $L\cap H^+=\{0\}$.\par
(5.5),(5.6), along with the maximality of $G$ give that $\pi^{-1}(L)=G$ so $\pi(\pi^{-1}(L))=\pi(G)$ which implies $ L=\pi(G)$ contradiction. So, $\pi(G)$ is maximally singular.\par

To prove the last statement, assume for the sake of contradiction that \begin{equation}
    \label{5.7}
    (0,...,0,y_i,0,...0)\in \pi(G)
\end{equation}
where $y_i\in H_i$ for some $i$. Then there exist
\begin{equation}
    \label{5.8}
    x=(x_1,...,x_n)\in G
\end{equation}
such that \par
$$\pi(x)=(0,...,0,y_i,0,..,0).$$
If $j\neq i$, then $\pi_j(x_j)=0$. Thus $x_j\in G^{zero}_j$. Moreover, $\pi_i(x_i)=y_i$. \par

Because $x_j\in G_j^{zero}$, it follows that
\begin{equation}
    \label{5.9}
    (0,...,0,x_j,0,...,0)\in G.
\end{equation}
for every $j\neq i$. Finally, (5.8),(5.9)$\Rightarrow$ $(0,...,0,x_i,0,...,0)\in G\Rightarrow x_i\in G^{zero}_i\Rightarrow  y_i=0$ as desired.
\qed \vspace{5mm}

Set  $$H_i^{neg}=\{ x_i \in H_i, \text{ such that }\exists  ( a_{i},..., a_{i-1},  x_i,  a_{i+1},..., a_n)\in \pi(G), \text{ and } a_j \geq 0 \forall j\neq i\}.$$ and
$$H_i^{pos}=\{ x_i \in H_i, \text{ such that }\exists  ( a_{i},..., a_{i-1},  x_i,  a_{i+1},..., a_n)\in \pi(G), \text{ and } a_j \leq 0 \forall j\neq i\}.$$ \par
By construction of $H_i$,
\begin{equation}
    \label{5.10}
    H_i^{pos}\cap H_i^{neg}=\{0\}, \hspace{2mm} H_i^{pos}=-H_i^{neg},\hspace{2mm} H_i^{pos} \text{ and }H_i^{neg} \text{ are semigroups and } H_i^{neg}\cap H_i^+=\{0\}.
\end{equation}   \par
In order to achieve our goal, we must make (on the totally ordered group) the elements of $H_i^{pos}$ positive and the elements of $H_i^{neg}$ negative. \par
On the next claim, we will define our traces. \par
\textbf{Claim 2:}\hspace{2mm} For every $i=1,2,...,n$, there exists $\tau_i\in T(A_i)$, which can be taken faithful if $A_i\in \mathcal{G}$, such that
\begin{enumerate}[label=\roman*.]
    \item $\hat{\tau}_i(G_i^{zero})=0$.
    \item If $\bar \tau_i$ is the induced state on $H_i$, then $\bar \tau(z)\geq 0$ for every $z\in H_i^{pos}$ and thus $\bar \tau(z)\leq 0$ for every $z\in H_i^{neg}$.
\end{enumerate}

\textbf{Proof of Claim 2:}\hspace{2mm} The fact that $G_i^{zero}$ is singular,(5.10) and $K_0(A_i)\cong K_0(A_i)\otimes \Q$ guarantee that the assumptions of Lemma \ref{order_lemma} are satisfied for $G_i^{zero}=H_1$ and $\pi_i^{-1}(H_i^{pos})=H_2$. Result follows from the fact that the map in (2.5) is onto and Lemma \ref{order_lemma} itself. Note that if $A_i\in \mathcal{G}$ for some $i$, then every state can be induced by a faithful trace, which allows us to choose $\tau_i$ to be faithful (see Remark \ref{trace_kills_singular}).

\qed\par

\vspace{3mm}
Now, we are ready to define our total orders. \par
 Let $x_i\neq 0, x_i\in H_i$. Then by maximality of $\pi(G)$ and the fact that $x=(0,0,...,x_i,0,...0)\notin \pi(G)$, we deduce that $$\exists k\in \Z \text{ and }a=(a_1,a_2,...,a_n)\in \pi (G), \text{ such that }a+kx>0 \hspace{2mm}. \text{ Thus } $$\begin{equation}
\label{5}
a_i+kx_i\geq 0. \hspace{2mm}
\end{equation}
Observe that $\forall l\neq i,$ we have $ a_l\geq 0$, so \begin{equation}
    \label{6}
    a_i\in H_i^{neg}.
\end{equation}
Define $\Phi_i:H_i\rightarrow\{0,-1,1\} $ such that \par
\begin{align}
\Phi_i(x) = \left\{ \begin{array}{cc}
                0 & \hspace{5mm} \text{ if }x=0 \\
                1 & \hspace{5mm} if \hspace{2mm} x\neq 0 \text{ and } \exists k\in \N^*\hspace{2mm} and \hspace{2mm}a\in H_i^{neg}: a+kx\geq 0\\
                -1 & \hspace{5mm}  if \hspace{2mm} x\neq 0 \text{ and } \exists k\in \Z_{<0}\hspace{2mm} and \hspace{2mm}a\in H_i^{neg}: a+kx\geq 0\\
                \end{array} \right.
\end{align}
As expected, the elements that take value 1, will be our positive elements (on the totally ordered groups), while the elements that take value -1 will be our negative elements. In Claims 3-5 we will show the properties of $\Phi_i$ needed to make sure that we will end up with total orders. \vspace{2mm}

\textbf{Claim 3:}\hspace{2mm}$\Phi_i$ is well-defined. \par
\textbf{Proof of Claim 3:} First of all, (5.11) and (5.12) yield that $\forall x\in H_i, \Phi_i(x)$ can take at least one value, according to the definition. Assume that $\exists x\in H_i\backslash\{0\}, n_1>0,n_2<0 (n_i\in \Z), a,b\in H_i^{neg}$, such that $ a+n_1x\geq 0$ and $b+n_2x\geq 0$. Notice that if $a=0$ and $b=0$, we get that $x=0$, contradiction. So at least one of $a,b$ is nonzero. In addition, we have\vspace{2mm}

$\left\{
\begin{array}{rr}
     & a+n_1 x\geq 0\\
     &b+n_2 x\geq 0
\end{array}
\right.
\Rightarrow
\left\{
\begin{array}{rr}
     &  |n_2|a+n_1|n_2|x\geq 0 \\
     & n_1b+n_1n_2x\geq 0
\end{array}
\right.
\xRightarrow[\text{add together}]{n_2<0}
\vert n_2|a+n_1b\geq 0$. \vspace{3mm}

Because $a,b\in H_i^{neg}, n_1,n_2\neq 0 \text{ and by }(5.10)$, we deduce that $a=b=0$, contradiction.
So $\Phi_i$ is well-defined. \qed \vspace{2mm}

\textbf{Claim 4:} \hspace{2mm} \begin{enumerate}[label=\roman*.]
    \item If $\Phi_i(x)=\Phi_i(y)=1$, then $\Phi_i(x+y)=1$.
    \item If $\Phi_i(x)=\Phi_i(y)=-1$, then $\Phi_i(x+y)=-1$.
\end{enumerate}
\textbf{Proof of Claim 4:} \hspace{2mm} We will only prove the first statement. The second one can be proved in an identical way. \par
Assume that $\Phi_i(x)=\Phi_i(y)=1$. Then $x,y\neq 0$ and $\exists a,b\in H_i^{neg}, k,m\in \N^*$, such that \vspace{3mm}

$\left\{
\begin{array}{rr}
     & a+kx\geq 0\\
     & b+my\geq 0
\end{array}
\right.
\Rightarrow
\left\{
\begin{array}{rr}
     &  mkx+ma\geq 0\\
     & mky+kb\geq 0
\end{array}
\right.
\Rightarrow
 ma+kb+mk(x+y)\geq 0 (*).$

 If $x+y=0$(**), then it has to be $ma+kb\geq 0.$ Because $a,b\in H_i^{neg}, m,k\in \N^* \text{ and by }(5.10)$ we deduce that $a=b=0$. But this yields that $x,y\geq 0$. By (**) it follows that $x=y=0$, contradiction. So $x+y\neq 0$ and (*) gives $\Phi_i(x+y)=1$. \qed \vspace{2mm}

\textbf{Claim 5:}\hspace{2mm} $\Phi_i(-x)=-\Phi_i(x)$. \par
\textbf{Proof: }\hspace{2mm} This is obvious. \qed \par
\vspace{5mm}
Claim 6 will allow us to guarantee that the image on $G$ under the $K_0$ map of the AF-embedding will be singular.

\textbf{Claim 6:} There is no $ (x_1,x_2,...,x_n)\in \pi(G)\backslash \{0\}$ such that for every $i=1,2,...,n$, we have $\Phi_i(x_i)\geq 0$. (hence the same statement with negatives is true) \par
\textbf{Proof of Claim 6:} \hspace{2mm} Assume the contrary. Then there exists \begin{equation}
    \label{7}
    (x_1,x_2,...,x_n)\in \pi(G)\backslash\{0\} \text { such that } \forall i, \Phi_i(x_i)\geq 0.
    \end{equation}
Let $$\J:=\{i \hspace{2mm} \vert \hspace{2mm} x_i\neq 0\}.$$
 By assumption $\J\neq \emptyset$. By (\ref{7}), for each $i\in \J$, \begin{equation}
\label{**}
(a_1^{(i)},a_2^{(i)},...,,a_{i-1}^{(i)},m_ix_i+a_i^{(i)},a_{i+1}^{(i)},...,a_n^{(i)})> 0
\end{equation}
for some $m_i\in \N^*, a_i^{(i)}\in H_i^{neg}, a_j^{(i)}\geq 0 \hspace{2mm}\forall j\neq i$ and $(a_1^{(i)},... ,a_n^{(i)})\in \pi(G) $. It follows that \begin{equation}
    \label{8}
    l_i:=(\frac{N}{m_i}a_1^{(i)},...,\frac{N}{m_i}a_{i-1}^{(i)},\frac{N}{m_i}a_i^{(i)}+Nx_i,...,\frac{N}{m_i}a_n^{(i)})>0,\text{ where }N=\prod_{i\in \J} m_i.
\end{equation}
But $\frac{N}{m_i}(a_1^{(i)},... ,a_n^{(i)}) \in \pi(G)$ \hspace{2mm} and $N(x_1,...,x_n)\in \pi(G)$, so after taking the sum, we have that
\begin{equation}
    \label{?}
    l:=(Nx_1+\sum_{i\in \J}  \frac{N}{m_i} a_1^{(i)},...,Nx_n+\sum_{i\in \J} \frac{N}{m_i} a_n^{(i)})\in \pi(G).
\end{equation}

But $l=\sum_{i \in \J}l_i>0$, contradicting the fact that $\pi(G)$ is singular.\qed \par

 \vspace{2mm}
 Our next Claim, allows us to make sure that the group homomorphism (on the $K_0$-level) will be positive. \par

 \textbf{Claim 7:}\hspace{2mm} For $x\in H_i$, the following holds: \par
 \begin{enumerate}[label=\roman*.]
     \item If $\Phi_i(x)\geq 0$, then $\bar \tau_i(x)\geq 0$.
     \item If $\Phi_i(x)\leq 0$, then $\bar \tau_i(x)\leq 0$.
 \end{enumerate}
 \textbf{Proof of Claim 7:} \hspace{2mm} It is enough to prove the first statement for $x\neq 0$. Assume that $\Phi_i(x)=1$. Then there is $k\in \N^*$ and $a\in H_i^{neg}$ such that $kx+a> 0. \text{ Thus } \bar \tau_i(kx)+\bar \tau_i(a)\geq 0$. But, because $a\in H_i^{neg}$, it follows that $\bar \tau_i(a)\leq 0$ by Claim 2, hence $\bar \tau_i(kx)\geq 0$ which implies $\bar \tau_i(x)\geq 0$. \qed \vspace{2mm}

 Define $P_i=\{x\in H_i,\hspace{2mm} \Phi_i(x)\geq 0\} $. Notice that $(H_i,P_i)$ is a countable, totally ordered group (by Claims 4 and 5), hence it is a dimension group. Claim 7 verifies that
 \begin{equation}
 \label{10}
 \{x\in H_i,\hspace{2mm} \bar \tau_i(x)>0\}\subset P_i  \text{ and }\{x\in H_i,\hspace{2mm} \bar \tau_i(x)<0\}\subset - P_i.
 \end{equation}
 Because $\bar \tau_i(\overline{[1_{A_i}]}_0)=\hat{\tau_i}({[1_{A_i}]}_0)=1, \hspace{2mm}\overline{[1_{A_i}]}_0$ is an order unit on this new ordered group. \par
 \vspace{2mm}
 Claim 8 guarantees that each of the totally ordered groups has a unique state, and actually the one needed so that the assumptions of Theorem \ref{lifting_theorem} are satisfied. \par

 \textbf{Claim 8:}\hspace{2mm} $S(H_i,P_i,\overline{[1_{A_i}]}_0)=\{\bar \tau_i\}$. \par
 \textbf{Proof of Claim 8:} \hspace{2mm} Let $\rho \in S(H_i,P_i,\overline{[1_{A_i}]}_0)$ and $x\in H_i$. For better notation set also $u=\overline{[1_{A_i}]}_0$. Assume that $\rho(x)\neq \bar \tau_i(x)$. Assume first that $\rho(x) < \bar \tau_i(x)$. Then $\exists q\in \Q: \rho(x)<q<\bar \tau_i(x)$. Hence we have $$\bar \tau_i(x-qu)>0 \xRightarrow{(\ref{10})} x-qu\geq 0 \Rightarrow x\geq qu \xRightarrow{\rho\in S(H_i)}\rho(x)\geq \rho(qu) \Rightarrow \rho(x)\geq q$$ contradiction. Similarly, we can contradict  the case $\rho(x) > \bar \tau_i(x)$. So $\rho(x)=\bar \tau_i(x)$. Because $x$ is arbitrary, $\rho=\bar \tau_i$. \qed \vspace{2mm}

 By the Effros-Handelman-Shen Theorem \cite[Thm 7.2.6]{Rrdam2000AnIT}, there are $B_i$ unital, AF algebras, such that
 \begin{equation}
     \label{6666}
     (K_0(B_i),K_0(B_i)^+,[1_{B_i}]_0)\cong (H_i,P_i,\overline{[1_{A_i}}]_0)
 \end{equation}
 Because of Claim 8, $B_i$ has a unique trace, $\tau_{B_i}$ and $\hat{\tau}_{B_i}=\bar \tau_i$. Also $B_i$ is $\U$-stable because $H_i\cong H_i\otimes \Q$. Furthermore, we have the following commutative diagram:
 \begin{center}
 \begin{tikzcd}
 K_0(A_i) \arrow {r}{\bar \pi_i} \arrow[swap]{dr}{\hat{\tau_i}} & K_0(B_i) \arrow{d}{\hat{\tau_{B_i}}} \\
 & \R
 \end{tikzcd}
 \end{center}
 where $\bar \pi_i$ is the group homomorphism arising from $\pi_i$ after composing with the order isomorphism implementing \ref{6666}. By \ref{10}, it is positive. Moreover, we have $\bar \pi_i[1_{A_i}]_0=[1_{B_i}]_0$ \par
 Pick $i\in \{1,2,3,...,n\}$. If $A_i \in \mathcal{G}$, $\tau_i$ is faithful, so Theorem \ref{lifting_theorem} applies and hence there is a faithful *-homomorphism $\phi_i:A_i\hookrightarrow B_i$ such that $(\phi_i)_*=\pi_i$. \par
 If $A_i$ is an AF algebra, then by Elliott's Classification Theorem for AF algebras \cite[Thm. 7.3.4]{Rrdam2000AnIT}, $\pi_i$ lifts to a *-homomorphism $\phi_i:A_i\rightarrow B_i$. Note that if $\ker\phi_i$ is non-zero, then it should contain a non-zero projection $p$ (recall that because $A_i$ is an AF-algebra, it has property (SP) which means that every non-zero hereditary $C^*$-subalgebra has a non-zero projection). So $\phi_i(p)=0$, which implies $\pi_i[p]_0=0.$ But this contradicts the fact that $\ker\phi_i \cap K_0(A_i)^+=\{0\}$, so $\phi_i$ is injective for every $i$.
 If we set $B=\oplus_{i=1}^n B_i, \hspace{2mm} \phi=\oplus_{i=1}^n \phi_i,$ then $\phi:A\hookrightarrow B$ is faithful. Also B is an AF algebra and $\phi_*=\pi$. Finally, by Claim 6, $\phi_*(G)$ is singular so there exists $D$ quasidiagonal and $\psi:B\hookrightarrow D$ such that $\psi_*(\phi_*(G))=0$. By composing the two maps, we have that $\psi \circ \phi:A\hookrightarrow D$ satisfies $(\psi \circ \phi)_*(G)=0$. So $A$ has the $K_0$-embedding property.
\end{proof}
\end{proposition}

Actually, Proposition \ref{direct_sums} still holds if, when defining $\mathcal{G}$, we replace the condition that all states should be induced by faithful traces, with the following weaker K-theoretic condition: \par

\emph{For every $G_1\subset G_2\subset K_0(A\otimes \U)$ with $G_1\leq K_0(A\otimes \U)$ singular subgroup and $G_2\subset K_0(A\otimes \U)$ subsemigroup with $G_2\cap -K_0(A\otimes \U)^+=\{0\}$ and $G_2\cap -G_2=G_1$, there exists $\tau\in T(A\otimes \U)$ faithful such that $\hat{\tau}(G_1)=0$ and $\hat{\tau}(z)\geq 0$ for every $z\in G_2$.} \par
However, this condition is very technical and we have not managed to find any interesting $C^*$-algebras that have states that are not induced by any faithful trace, but still satisfy the condition. \par
\vspace{5mm}

Moreover, notice that Proposition \ref{simple} and Proposition \ref{direct_sums} yield another proof of the following part of Theorem \ref{main-theorem}: If $A=\oplus_{i=1}^n A_i$, where each $A_i$ is unital, separable, simple, nuclear, quasidiagonal and satisfies the UCT, $B$ is separable, nuclear, quasidiagonal and satisfies the UCT and
$$\begin{tikzcd}
0 \arrow[r] & A \arrow[r, "\iota"] & E \arrow[r, "\pi"] & B \arrow[r] & 0
\end{tikzcd}$$
is a short exact sequence, then $E$ is quasidiagonal iff stably finite. This proof doesn't use either the results of Section 6, or any of the classification results in \cite{tikuisis2017quasidiagonality} or \cite{MR4228503}. \par

Now we will use the aforementioned results to get new examples of $C^*$-algebras with the $K_0$-embedding property.

\begin{corollary}\label{commutative}
If A is a separable and commutative $C^*$-algebra, then $A$ has the $K_0$-embedding property.
\begin{proof}
First of all, by Proposition \ref{unitization} we may assume that $A$ is unital. Observe that $A$ can be written as an inductive limit $A=\varinjlim C(X_n)$, where the connecting maps are injective and all $X_n$ are separable, compact, Hausdorff with $\dim (X_n)<\infty$. Indeed, let $\{x_n, \hspace{2mm} n\in \N \}$ be a dense subset of $A$, with $x_0=1$ and consider $A_n=C^*(1,x_1,...,x_n)$. For every $n\geq 1$, $A_n$ is unital and commutative so $A_n=C(X_n)$, where $X_n$ is separable, compact and Hausdorff. Moreover, $A=\varinjlim A_n$ where the connecting maps are the inclusions. Finally, by \cite[Prop 1.4]{ng2006note}, $\dim (X_n)<\infty.$ So Remark \ref{inductive_limit} allows us to assume that $A=C(X)$ , where $n=\dim (X)<\infty$. But by \cite[Thm 1.10.16]{engelking1978dimension}, $A$ is locally approximated by algebras $C(X_i)$, where $X_i$ are finite CW complexes with finitely many connected components. By Proposition \ref{direct_sums}, $C(X_i)$ has the $K_0$-embedding Property for every $i$. So, $A$ has the $K_0$-embedding property by Proposition \ref{local_approximations}.
\end{proof}
\end{corollary}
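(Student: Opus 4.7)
The plan is to reduce, via the local-approximation machinery established in Section 3, to the case of $C(X)$ with $X$ a finite CW complex having finitely many connected components, and then apply Proposition \ref{direct_sums} after splitting into components.

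First I would invoke Proposition \ref{unitization} to pass to the unital case, so $A = C(X)$ with $X$ compact Hausdorff, and second countable (hence metrizable) because $A$ is separable. Then I would write $A$ as an inductive limit of finitely generated unital commutative sub-$C^*$-algebras: fixing a dense sequence $\{x_n\}_{n\geq 1}$ in $A$ and setting $A_n = C^*(1, x_1, \ldots, x_n)$, one has $A = \varinjlim A_n$ with injective (in fact unital inclusion) connecting maps. Each $A_n \cong C(X_n)$ where $X_n$ is a compact metrizable space, and because $A_n$ is finitely generated, $X_n$ is finite-dimensional (this is the content of the Ng citation). By Remark \ref{inductive_limit}, it suffices to handle each $C(X_n)$, so I reduce to the case $A = C(X)$ with $X$ compact metrizable and $\dim X < \infty$.

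Next, the key classical fact from dimension theory (\cite[Thm 1.10.16]{engelking1978dimension}) is that a finite-dimensional compact metric space is an inverse limit of finite polyhedra; dually this means $C(X)$ is locally approximated by algebras of the form $C(Y)$, where $Y$ is a finite CW complex. Such a $Y$ has finitely many connected components $Y = \bigsqcup_{j=1}^k Y_j$, whence $C(Y) = \bigoplus_{j=1}^k C(Y_j)$. By Proposition \ref{connected}, each $C(Y_j)$ lies in $\E$ (since $Y_j$ is connected, compact, Hausdorff, and separable), so the direct sum lies in $\widetilde{\E}$. Proposition \ref{direct_sums} then gives that $C(Y)$ has the $K_0$-embedding property.

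Finally, the class of separable, nuclear, quasidiagonal $C^*$-algebras with the $K_0$-embedding property thus contains all $C(Y)$ as above, and by Proposition \ref{local_approximations}, any $C^*$-algebra locally approximated by members of this class has the property. Applying this to our reduced $A = C(X)$ finishes the proof. The main subtlety I would be careful about is the local-approximation step in dimension theory: the approximating polyhedra come from nerves of finite open covers and are not literally subalgebras of $C(X)$, but composing with a partition-of-unity map $C(Y) \to C(X)$ gives the required subalgebras up to arbitrarily small error on any prescribed finite set, which is exactly what Definition 3.4 requires. Everything else is an application of the already-developed machinery.
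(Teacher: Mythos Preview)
Your proposal is correct and follows essentially the same route as the paper's own proof: reduce to the unital case via Proposition~\ref{unitization}, write $A$ as an inductive limit of finitely generated commutative subalgebras $C(X_n)$ with $\dim X_n<\infty$ (citing Ng), use Remark~\ref{inductive_limit} to reduce to finite-dimensional $X$, invoke Engelking to locally approximate by $C(Y)$ with $Y$ a finite CW complex, and then apply Proposition~\ref{connected} plus Proposition~\ref{direct_sums} to the connected components before concluding with Proposition~\ref{local_approximations}. Your added remark about the local-approximation subtlety (that one needs the approximating $C(Y)$ to sit as a genuine subalgebra of $C(X)$) is a point the paper passes over silently; note that the inverse-limit presentation from Engelking/Freudenthal can be arranged with surjective projections $X\to Y_k$, which makes $C(Y_k)\hookrightarrow C(X)$ an honest embedding and dispenses with any worry about the image being only a quotient of $C(Y)$.
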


\begin{corollary}\label{direct_sum_crossed_products}
Let $A=\oplus_{i=1}^n A_i$ be a direct sum of $C^*$-algebras and $\sigma:\Z\rightarrow Aut(A)$ be an action such that $\sigma=\sigma_1 \oplus...\oplus \sigma_n$ for some $n\in \N$ and minimal actions $\sigma_i : \Z \rightarrow Aut(A_i)$. Assume that each $A_i$ is separable, nuclear, unital, satisfies the UCT and has a $\sigma_i$-invariant trace. Then $A\rtimes_{\sigma} \Z$ has the $K_0$-embedding property.
\begin{proof}
By \cite[Lemma 2.8.2]{MR911880}, $A\rtimes_{\sigma} \Z= \bigoplus_{i=1}^n A_i \rtimes_{\sigma_i} \Z$. The result follows from Proposition \ref{crossed_products} and Proposition \ref{direct_sums}.
\end{proof}
\end{corollary}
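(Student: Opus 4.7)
The plan is to reduce this corollary to the two previously established results, Proposition \ref{crossed_products} and Proposition \ref{direct_sums}, via the standard fact that crossed products distribute over direct sums of invariant summands.

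First I would observe that since $\sigma = \sigma_1 \oplus \cdots \oplus \sigma_n$ acts block-diagonally on $A = A_1 \oplus \cdots \oplus A_n$, each summand $A_i$ is a $\sigma$-invariant ideal in $A$. By \cite[Lemma 2.8.2]{MR911880} (or a direct verification using the universal property of the crossed product, since each $A_i \rtimes_{\sigma_i} \Z$ receives a covariant pair compatible with the inclusion $A_i \hookrightarrow A$), this yields a natural isomorphism
\begin{equation*}
A \rtimes_{\sigma} \Z \;\cong\; \bigoplus_{i=1}^{n} A_i \rtimes_{\sigma_i} \Z.
\end{equation*}

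Next, for each $i$ the hypotheses of Proposition \ref{crossed_products} are satisfied: $A_i$ is separable, nuclear, unital, satisfies the UCT, and admits a $\sigma_i$-invariant trace, and $\sigma_i$ is a minimal action. Therefore each summand $A_i \rtimes_{\sigma_i} \Z$ lies in the class $\E$.

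Finally, a finite direct sum of algebras in $\E$ belongs by definition to $\widetilde{\E}$, so Proposition \ref{direct_sums} applies to yield that $\bigoplus_{i=1}^{n} A_i \rtimes_{\sigma_i} \Z$ has the $K_0$-embedding property, and hence so does $A \rtimes_{\sigma} \Z$. There is no real obstacle here; the only point deserving of care is the direct-sum decomposition of the crossed product, but this is a standard and well-known fact for actions that respect an ideal decomposition of the underlying algebra.
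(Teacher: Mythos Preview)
Your proposal is correct and follows exactly the same approach as the paper's proof: decompose the crossed product as $\bigoplus_{i=1}^n A_i\rtimes_{\sigma_i}\Z$ via \cite[Lemma 2.8.2]{MR911880}, apply Proposition \ref{crossed_products} to put each summand in $\E$, and conclude with Proposition \ref{direct_sums}. Your write-up simply makes explicit the intermediate step that each summand lies in $\E$ (and hence the direct sum in $\widetilde{\E}$), which the paper leaves implicit.
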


\section{ASH algebras}
Recall that a $C^*$-algebra is called \emph{subhomogeneous} if there exists a positive integer $n$, such that every irreducible representation of $A$ is on a Hilbert space of dimension less or equal to $n$. A $C^*$-algebra is called \emph{approximately subhomogeneous (ASH)} if it is an inductive limit of subhomogeneous $C^*$-algebras. We know that ASH algebras are nuclear, quasidiagonal \cite[Chapter 3.4]{rordam2002classification} and satisfy the UCT (every ASH algebra is an inductive limit of type I $C^*$-algebras, but these algebras satisfy the UCT by \cite[22.3.5]{blackadar1998k}). In \cite{elliott2020decomposition} Elliott, Niu, Santiago and Tikuisis defined an important subclass of ASH algebras.

 \begin{definition}[Def. 2.1, \cite{elliott2020decomposition}]\label{NCCC_1} A \textbf{non-commutative cell complex (NCCC)} is a $C^*$-algebra given by the following recursive definition.
 \begin{enumerate}[label=\roman*.]
     \item A finite dimensional $C^*$-algebra is a NCCC
     \item If $A$ is a NCCC, $n,k\in \N$, $\phi:A\rightarrow C(S^{n-1}, M_k)$ is any unital *-homomorphism, and $\psi: C(D^n, M_k)\rightarrow C(S^{n-1}, M_k)$ is the restriction homomorphism, then the pullback \par
     $A\oplus_{C(S^{n-1}, M_k)} C(D^n, M_k)=\{(a,f)\in A\oplus C(D^n,M_k): \phi(a)=\psi(f)\}$ is also a NCCC.
 \end{enumerate}
 \end{definition}

 We will allow $n=0$. In this case we will use the conventions $D^0=\{pt\}$ and $S^{-1}=\emptyset$. So $\phi$ will be the zero map in this case.\par

 Note that every NCCC is unital. NCCC are a subclass of recursive subhomogeneous (RSH) algebras, which were introduced by C.Phillips in \cite{phillips2007recursive}. The name "non-commutative cell complex" comes from the fact that in the commutative case, this is equivalent to $A$ being isomorphic to $C(X)$, where $X$ is a (finite) cell complex. \par
 The following definition is on the style of \cite[Def 1.2]{phillips2007recursive}, but for NCCC:

 \begin{definition}\label{NCCC_2}
 From the previous definition it is clear that every NCCC is of the form \par
\begin{equation}
     \label{6.1}
     A=[...[F_0\oplus_{C(S^{{n_1}-1},M_{r_1})} C(D^{n_1},M_{r_1})]\oplus_{C(S^{{n_2}-1},M_{r_2})}C(D^{n_2},M_{r_2})...]\oplus_{C(S^{{n_l}-1},M_{r_l})}C(D^{n_l},M_{r_l})
 \end{equation}
 with $F_0$ finite dimensional. Note that by definition it can be $F_0=0$. Set $A_0=F_0$ and $A_i$ to be the $i-th$ pullback, for every $i>0$. We will denote with $\phi_i=\phi_i^{(A)}:A_i\rightarrow C(S^{n_{i+1}-1},M_{r_i}), \psi_i=\psi_i^{(A)}:C(D^{n_i},M_{r_i})\rightarrow C(S^{n_i-1},M_{r_i}) $  the *-homomorphisms defining the pullback. Recall that $\phi_i(a)=\psi_{i+1}(f)$ for every $(a,f)\in A_{i+1}$.\par An expression of this type will be called a \textbf{decomposition} of $A$. Note that such a decomposition is not unique. \par
 Associated with this decomposition are:
 \begin{enumerate}[label=\roman*.]
     \item its \textbf{length} $l$.
     \item its \textbf{base spaces} $X_0$,$X_1,...,X_l$ (where $X_0$ is a disjoint union of singletons and $X_i=D^{n_i}$, for $i\geq 1$) and \textbf{total space} $X:=\coprod X_k$.
     \item its \textbf{$i-th$ stage algebra} $A_i (i=0,1,2,...l)$.
     \item its \textbf{(topological) dimension} $\dim (A)=\max_k\dim(X_k)$.
     \item its \textbf{standard representation} $\sigma=\sigma^{(A)}:A\rightarrow F_0\oplus (\oplus_{i=1}^l C(D^{n_i}, M_{r_i}))$ defined by forgetting the restriction to a subalgebra in each of the fibered products in the decomposition.
     \item the associated \textbf{evaluation maps} $ev_x:A\rightarrow M_{r_i}$ for $x\in X$.
     \item the \textbf{rank function} $\rank:P_{\infty}(A)\rightarrow C(X,\N)$ via the natural definition (recall that $X$ is the total space).
 \end{enumerate}
 \end{definition}
Before going forward, let's clarify some notation. For the rest of the section, unless clearly stated otherwise, $A$ will be a NCCC of length $l$ with decomposition as in (6.1). Let $p\in P_{\infty}(A)$. Because of the standard representation, we can view $p$ inside $P_{\infty}(F_0\oplus (\oplus_{i=1}^l C(D^{n_i}, M_{r_i})))$, so we may write $p=(p_0,...,p_l)$, where $p_0\in P_{\infty}(F_0)$ and $p_i\in P_{\infty}(C(D^{n_i}))$ for every $i>0$. Moreover, for every $i>0$ we will write $\rank p_i$ instead of $\rank p_i(x)$, because $D^{n_i}$ is connected, so rank is constant on each $D^{n_i}$. \par  When denoting $y=[p]_0-[q]_0$ we will mean that $p=(p_0,...,p_l)$,$q=(q_0,...,q_l)\in P_{\infty}(A)$, and we will also write $\bar p_s=(p_0,...,p_s), \bar q_s=(q_0,...,q_s), y_s=[\bar p_s]_0-[\bar q_s]_0$, for $s=0,1,2,...,l$. \par

 The reason why we chose to work with NCCC is the fact that every unital separable, subhomogeneous algebra is locally approximated by NCCC.
 \begin{proposition} [Thm. C.,  \cite{elliott2020decomposition}]\label{ASH_approximation} Let $A$ be a unital separable subhomogeneous algebra. Then $A$ is locally approximated by NCCC.
 \end{proposition}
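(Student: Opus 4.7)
The plan is to reduce the statement in two stages. Since $A$ is separable unital ASH, write $A=\varinjlim A_n$ with each $A_n$ unital subhomogeneous and unital connecting maps; any finite $F\subset\subset A$ and $\varepsilon>0$ is $\varepsilon/2$-approximated inside some $A_n$, so it suffices to show that every separable unital subhomogeneous $C^*$-algebra $B$ is locally approximated by NCCC. By Phillips's structure theorem \cite[Thm.~2.16]{phillips2007recursive}, $B$ is isomorphic to a (possibly infinite-length) recursive subhomogeneous (RSH) algebra with compact metrizable base spaces of bounded topological dimension; truncating the recursion at a large enough stage $\varepsilon/2$-approximates the finite set from $B$, so it is enough to locally approximate a finite-length RSH algebra by NCCC. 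An NCCC is precisely an RSH algebra whose base spaces are closed disks $D^{n_i}$ glued via the sphere restrictions $C(D^{n_i},M_{r_i})\to C(S^{n_i-1},M_{r_i})$, so the remaining task is to approximate arbitrary compact metric base spaces by finite CW complexes built out of disks.

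I would then proceed by induction on the length $l$ of the finite RSH decomposition. The base case ($A_0=F_0$ finite dimensional) is itself a NCCC. For the inductive step, assume the $i$-th stage has the form $A_i=A_{i-1}\oplus_{C(Y_i,M_{r_i})}C(X_i,M_{r_i})$ for compact metric spaces $Y_i\subset X_i$, and that an NCCC approximation of $A_{i-1}$ on the relevant finite set, compatible with the attaching map $\phi_i$, has already been produced. I would approximate $X_i$ (of fixed finite dimension) by a finite simplicial / CW complex $K$ together with a map $K\to X_i$ of small distortion, invoking the classical theorem that finite-dimensional compact metric spaces are inverse limits of finite polyhedra, and similarly for a sub-complex of $K$ playing the role of $Y_i$. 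Every finite CW complex admits a cellular decomposition: attaching cells in order of increasing dimension realizes $C(K,M_{r_i})$ as an iterated pullback along $C(D^{n_j},M_{r_i})\to C(S^{n_j-1},M_{r_i})$ on top of $C(K^{(0)},M_{r_i})$, which is precisely a NCCC presentation. Glueing this NCCC decomposition on top of the NCCC approximation of $A_{i-1}$ via a suitable lift of $\phi_i$ to a *-homomorphism into the boundary algebra over the subcomplex then produces a NCCC approximating $A_i$.

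The main technical obstacle is carrying out these approximations coherently. The *-homomorphism $\phi_i:A_{i-1}\to C(Y_i,M_{r_i})$ must be $\varepsilon$-approximated (on the image of the finite set under the standard representation) by one landing inside the NCCC built over the approximating sub-CW-complex of $K$, and the cellular attaching maps in $K$ must be chosen so that the new fibered product is norm-close to $A_i$ on the given finite subset. Controlling the accumulated error through all $l$ recursion stages, while keeping the matrix-bundle structure (rather than the purely scalar CW argument) intact and honestly realizing the pullbacks as unital ones, is the delicate bookkeeping that drives the technical work in \cite{elliott2020decomposition}. Up to these matters, the argument reduces to two essentially classical ingredients: finite-dimensional compact metric spaces are approximable by finite CW complexes (in the inverse-limit sense), and every finite CW complex admits a cellular NCCC presentation obtained by attaching cells in order of dimension.
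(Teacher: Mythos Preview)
The paper does not prove this proposition at all: it is simply quoted, with attribution to \cite[Thm.~C]{elliott2020decomposition}, and then used as a black-box input for the remainder of Section~6. There is therefore no ``paper's own proof'' to compare your proposal against.

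For what it is worth, your outline is a reasonable summary of the strategy behind the cited result---reduce to a single subhomogeneous algebra, invoke Phillips's RSH structure theorem, and then replace the compact metric base spaces by finite CW complexes, whose cellular structure yields an NCCC presentation. Two minor inaccuracies: Phillips's theorem \cite[Thm.~2.16]{phillips2007recursive} already produces a \emph{finite}-length RSH decomposition for a separable subhomogeneous algebra, so no truncation step is needed; and writing a unital ASH algebra as an inductive limit with unital subhomogeneous building blocks and unital connecting maps is not automatic from the bare definition and itself requires justification. But none of this is relevant to the present paper, which treats the proposition purely as an external input.
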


Let $A$ be a NCCC. Our goal is to show that $A$ has the $K_0$-embedding Property (see Proposition \ref{NCCC_K_0_embedding_property}). We first present a sketch of the proof. Fix $y=[p]_0-[q]_0$, where $p=(p_0,...,p_l),q=(q_0,...,q_l)\in P_{\infty}(A)$. If there exist $x_1,x_2\in X$ with $\rank p(x_1)>\rank q(x_1)$ and $\rank p(x_2)<\rank q(x_2)$, then $\sigma_*(y)$ is non-zero singular. \par
If $\rank p(x)>\rank q(x)$ for every $x$, then by \cite[Prop 4.3]{phillips2007recursive} we have that there exists $M>0$ such that $My>0$ in $K_0(A)$. \par
Assume that $\rank p(x)\geq \rank q(x)$ for every $x$, but there exists $x_0$ such that $\rank p(x_0)=\rank q(x_0)$. Then it follows that $\sigma_*(x)\geq 0$, but the previous conclusion is not true anymore, as we can't use \cite[Prop 4.3]{phillips2007recursive}. For example, consider $A=A_1\oplus A_2$, where $A_i$ is a NCCC for $i=1,2$ (it is not difficult to see that $A$ is a NCCC), $p=(p_1,p_2)$ and $ q=(q_1,q_2)\in P_{\infty}(A)$, where $\rank p_1(x)>\rank q_1(x)$ for every $x$, $[p_2]_0-[q_2]_0\in K_0(A_2)$ is not a torsion element and $[p_2]_0-[q_2]_0\in \ker \sigma_*^{(A_2)}.$ Then $y=[p]_0-[q]_0\in K_0(A)$ is singular but $\sigma_*^{(A)}(y)>0$.
In order to bypass this problem, for every $y=[p]_0-[q]_0\in K_0(A)$, with $\sigma_*^{(A)}(y)\geq 0$, we will construct a *-homomorphism $h_y:A\rightarrow D_y(A)$, where $D_y(A)$ is an AF algebra with the property that if $(h_y)_*(y)\geq 0$, then there exists $M>0$ such that $My\geq 0$. This map will be constructed in two steps. First, we will show that we can construct a NCCC $R_y(A)$, which can be obtained from $A$ after deleting all the coordinates $j>0$ such that $\rank p_j>\rank q_j$. To show this, our main tool is Lemma \ref{main_lemma} while the construction is made right after it. Then we will construct (in Proposition \ref{main_technical_result}) a *-homomorphism $\Psi_y:A\rightarrow R_y(A)$, with the property that if $(\Psi_y)_*(y)\geq 0$, then there exists $M>0$ with $My\geq 0$. The key why the latter property will hold is Lemma \ref{lemma_2_gen}. The second step is to construct the *-homomorphism to the AF algebra. This will be achieved by showing the existence of a decomposition $R_y(A)=L_1\oplus L_2 $, such that the first coordinate of $(\Psi_y)_*(y)\in K_0(R_y(A))\cong K_0(L_1)\oplus K_0(L_2)$ is positive while the second one is an infinitesimal (Lemma \ref{empty_case} and Proposition \ref{main_technical_result}). Then the desired *-homomorphism will be the composition of the projection to the second coordinate with the embedding from Corollary \ref{AF-embedding}. Moreover, we will make all such $\Psi_y$ (recall that $y$ runs through the elements of $K_0(A)$ such that $\rank p(x)\geq \rank q(x)$ for every $x$) to belong to a finite set of *-homomorphisms. So, if we take their (finite) direct sum and then add the standard representation, we get a faithful *-homomorphism $h:A\rightarrow E(A)$. It won't be difficult to show that $h$ sends singular elements to singular elements. Notice that $E(A)\in \mathcal{O}$. So Proposition \ref{direct_sums} will give us that $A$ has the $K_0$-embedding Property.
  \vspace{5mm}

 \vspace{5mm}

Before starting the detailed proof, we need to introduce some more notation.
 \begin{definition}
 Let $A$ be a NCCC of length $l$ and $y=[p]_0-[q]_0\in K_0(A)$, where $p=(p_0,...,p_l)$,$q=(q_0,...,q_l)\in P_{\infty}(A)$. We will say that $y$ is \textbf{almost positive} if $\rank p(x)\geq \rank q(x)$ for every $x$ on the total space of $A$.
 \end{definition}
It is known what the trace simplex $T(A)$ looks like.
 \begin{lemma}[Cor. 2.5, \cite{elliott2017classification}]\label{trace_simplex}
 Any trace $\tau\in T(A)$ is of the form
 $$\tau(f_0,...,f_l)=a_0 \tau_0(f_0)+\sum_{i=1}^l a_i \int_{D^{n_i}\backslash S^{n_i-1}}tr(f_i(x))d\mu_i(x),$$
 where $\tau_0$ is a trace in $F_0$, $\mu_i$ is a probability measure in $D^{n_i}\backslash S^{n_i-1}, a_i\in [0,1]$ and $a_0+a_1+...+a_l=1$.

 \end{lemma} 

Recall that if $X$ is a contractible topological space, then $(K_0(C(X)),K_0(C(X))^+)\cong (\Z,\Z^+)$. More specifically the order isomorphism is $f\mapsto \rank f$ (see for instance \cite[Example 3.3.6]{Rrdam2000AnIT}. But $D^{n_i}$ is contractible so every $i>0$. So, $K_0(F_0\oplus (\oplus_{i=1}^l C(D^{n_i}, M_{r_i})))$ has no non-zero infinitesimals and equality on the $K_0$-group coincides with equality of the ranks. This observation allows us to deduce the following Lemma.

 \begin{lemma}\label{infinitesimals_NCCC}
 The group of infinitesimals of $A$ is
 $$\Inf(K_0(A))=\ker(\sigma_*)=\{[f]_0-[g]_0 \hspace{2mm} \vert \hspace{2mm} \rank f(x)=\rank g(x) \text{ for every x in the total space }\}.$$
 \begin{proof}
 Let $y=[p]_0-[q]_0\in \Inf(K_0(A))$. By Lemma \ref{infinitesimals_after_homomorphisms}, $\sigma_*(y)\in \Inf K_0(F_0\oplus (\oplus_{i=1}^l C(D^{n_i}, M_{r_i})))$. Hence $\sigma_*(y)=0$, by the comments above. If $y\in \ker(\sigma_*)$, then, again by the comments above, we have $\rank p(x)=\rank q(x)$ for every $x$ in the total space. Finally, assume that $\rank p(x)=\rank q(x)$ for every $x$. Then by Lemma \ref{trace_simplex}, $\hat{\tau}(y)=0$ for every $\tau \in T(A)$. Hence $y\in \Inf(K_0(A))$.
 \end{proof}
\end{lemma}
\vspace{5mm}
 Let $y=[p]_0-[q]_0\in K_0(A)$. Define  $$\W=\W_A(y):=\{i\geq 1: \rank p_i>\rank q_i\}.$$

 \begin{lemma}\label{main_lemma}
 Let $y=[p]_0-[q]_0$ be almost positive with $\rank p_l=\rank q_l$. Assume that $\W_A(y)\neq \emptyset$ and set $j:=\max\W_A(y)$. Then
 $\phi_i(0,...,0,a,0,...,0)=0 \text{ for every }i\geq j \text{ and }a\in \ker\psi_j$.

 \begin{proof}
 Let $y=[p]_0-[q]_0$ be almost positive with $\rank p_l=\rank q_l$. Assume that $\W_A(y)\neq \emptyset$ and set $j:=\max\W_A(y)$. By assumption, we have that $0<j<l$. We have $A_{j+1}=[A_{j-1}\oplus_{C(S^{{n_j}-1},M_{r_j})} C(D^{n_j},M_{r_j})]\oplus_{C(S^{{n_{j+1}}-1},M_{r_{j+1}})} C(D^{n_{j+1}},M_{r_{j+1}})$. Pick $s\in S^{n_{j+1}-1}$ and denote $\widetilde{\phi_j}:=ev_s \circ \phi_j: A_j\rightarrow M_{r_{j+1}}$. Then by \cite[Remark 4.7]{elliott2017classification}, we get that $\widetilde{\phi_j}$ factors (up to unitary equivalence which we may ignore because it doesn't affect the $K_0$ map) through the direct sum via the following commutative diagram: \par
 \begin{tikzcd}
A_j \arrow[d, "\sigma"] \arrow[r, "\widetilde{\phi_j}"]         & M_{r_{j+1}}                                  \\
{A_{j-1}\oplus C(D^{n_j},M_{r_j})} \arrow[r, "\phi_B \oplus \phi_D"] & M_{q}\oplus M_{w} \arrow[u, "\iota"]
\end{tikzcd}
 \par
 where $\sigma$ is the standard representation, $\phi_B,\phi_D$ are *-homomorphisms (not necessarily unital; they could be even zero), $q+w=r_{j+1}$ and $\iota$ is the diagonal embedding. By maximality of $j$, we have that $\rank p_{j+1}=\rank q_{j+1}$. But $D^{n_{j+1}}$ is contractible, so $[p_{j+1}]_0=[q_{j+1}]_0$ in $K_0(C(D^{n_{j+1}}))$. Notice that $(\phi_j)_*(y_j)=(\psi_{j+1})_*([p_{j+1}]_0-[q_{j+1}]_0)=0$. So,
 \begin{equation}
     \label{***}
     (\widetilde{\phi_j})_*(y_j)=0.
 \end{equation}
  Let $w=\rank p_j-\rank q_j>0$ and consider $(\phi_D)_*:\Z\rightarrow \Z$. By the commutativity of the diagram, as well as (\ref{***}), we can see that $(\phi_D)_*(w)=0$. Hence $(\phi_D)_*=0$. Thus it has to be zero. This means that $\phi_D=0$. If now $a\in \ker\psi_j$, then from all the aforementioned we deduce that $$\widetilde{\phi_j}(0,a)=0\Rightarrow \phi_j(0,a)(s)=0.$$
 But $s$ is arbitrary, so $\phi(0,a)=0$. \par

 Let now $l>i>j$ and assume that the result holds for up to $i-1\geq j$. We will show it for $i$. By repeating the arguments for the base case we get the following commutative diagram (after fixing $s\in S^{n_{i+1}-1}$) \par
  \begin{tikzcd}
A_i \arrow[d, "\sigma"] \arrow[r, "\widetilde{\phi_i}"]         & M_{r_{i+1}}                                  \\
{A_{j-1}\oplus C(D^{n_j},M_{r_j})}\oplus...\oplus C(D^{n_i}, M_{r_i}) \arrow[r, "h_{j-1} \oplus...\oplus h_{i}"] & M_{q(j-1)}\oplus...\oplus M_{q(i)} \arrow[u, "\iota"]
\end{tikzcd}
\par
where $\widetilde{\phi_i}=ev_y \circ \phi_i$. Once again $\sigma$ is the standard representation, $\iota$ the diagonal embedding and $h_{j-1},...,h_{i}$ are *-homomorphisms. \par
By the same arguments we get $\widetilde{\phi_i}(y_i)=0$ and $h_j=0$. By the commutativity of the diagram and the fact that $y$ is arbitrary, $\phi_i(0,...,0,a,0,...,0)=0$, for every $a\in \ker(\psi_j)$ as desired. Note that the fact that $(0,0,...,0,a,0,...,0)$ is a well-defined element in $A_i$ is guaranteed by the inductive hypothesis.
 \end{proof}
 \end{lemma}

 Let $y=[p]_0-[q]_0\in K_0(A)$ be almost positive and $l>j=\max\W_A(y)$. Note that we still assume $\W_A(y)\neq \emptyset$. Assume that $j<l$.
 Define $$\bar \phi_j:A_{j-1}\rightarrow C(S^{n_{j+1}-1}, M_{r_{j+1}})$$
 via  $(\bar \phi_j)(a)=\phi_j(a,b)$ for every $a\in A_{j-1}$ and  some (all) $b\in C(D^{n_j}, M_{r_j}) \text{ such that }(a,b)\in A_j$. \par
 Notice that $\bar \phi_j$ is a well-defined, unital *-homomorphism (or zero if $\phi_j=0$).  Indeed, if $(a,b_1),(a,b_2)\in A_j$, then $$\phi_j(a,b_1)-\phi_j(a,b_2)=\phi_j(0,b_1-b_2)=0$$ by Lemma \ref{main_lemma}. Hence, we can define the pullback $$\overline{A_{j+1}}:=A_{j-1}\oplus_{ C(S^{n_{j+1}-1}, M_{r_{j+1}})} C(D^{n_{j+1}}, M_{r_{j+1}})$$
 Note that the maps defining the pullback are $\bar \phi_j$ and $\psi_{j+1}.$
 Set $$\Phi_{j+1}: A_{j+1}\rightarrow \overline{A_{j+1}}$$ via $\Phi_{j+1}(f,g,h)=(f,h),$ where $f\in A_{j-1}, g\in C(D^{n_j}, M_{r_j}), h\in C(D^{n_{j+1}}, M_{r_{j+1}}).$\par Note that $\bar \phi_j(f)=\phi_j(f,g)=\psi_{j+1}(h).$ Thus $(f,h)\in \overline{A_{j+1}}.$
 So, $\Phi_{j+1}$ is a well-defined unital *-homomorphism. \par
Actually, we can generalize this construction: \par
If $i>j$, then we can inductively define the pullback
\begin{equation}
\label{6.3}
\overline{A_{ij}}:=[A_{j-1}\oplus_{ C(S^{n_{j+1}-1}, M_{r_{j+1}})} C(D^{n_{j+1}}, M_{r_{j+1}})]\oplus... \oplus_{ C(S^{n_i-1}, M_{r_i)}} C(D^{n_i}, M_{r_i})
\end{equation}
by using the map
$$\bar \phi_{ij}:[A_{j-1}\oplus_{ C(S^{n_{j+1}-1}, M_{r_{j+1}})} C(D^{n_{j+1}}, M_{r_{j+1}})]\oplus... \oplus_{ C(S^{n_{i-1}-1}, M_{r_{i-1})}} C(D^{n_{i-1}}, M_{r_{i-1}})\rightarrow C(S^{{n_i}-1},M_{r_i})$$
with $$\bar \phi_{ij}(\bar f_{j-1},f_{j+1},...,f_{i-1})=\phi_i(\bar f_{j-1},f_j,f_{j+1},...,f_{i-1})$$
for some(any) $f_j\in C(D^{n_j, M_{r_j}})$ such that the right hand side is well-defined. Notice that Lemma \ref{main_lemma} guarantees that the map is well defined. Furthermore set
\begin{equation}
    \label{6.4}
     \Phi_{ij}: A_i\rightarrow \overline{A_{ij}}
\end{equation}

where the map takes an element of $A_i$ and "removes" it's j-th component. \par
\begin{lemma}\label{lemma_2_gen}
If $(\Phi_{ij})_*(y_i)\geq 0$, then there is $M\in \N^* \text{ such that } My_i>0$ in $K_0(A_i)$.
 \begin{proof}
Suppose that $(\Phi_{ij})_*(y_{i})\geq 0$. Also, (after replacing $p,q$ with $p\oplus 1_s$ and $q\oplus 1_s$ for large enough $s$; note that we can do this because $\rank ((p\oplus 1_s)(x))-\rank ((q\oplus 1_s)(x))=\rank p(x)-\rank q(x)$ for every $x$), we may assume that there exists a partial isometry $v=(\bar v_{j-1},v_{j+1},...,v_{i})\in M_{\infty}(\overline{A_{ij}})$ such that $v^*v=(\bar q_{j-1},q_{j+1},...,q_{i})$ and $vv^*\leq (\bar p_{j-1},p_{j+1},...,p_{i})$. Recall that $\bar v_{j-1}\in M_{\infty}(A_{j-1})$ and $v_{k}\in M_{\infty}(C(D^{n_{k}}))$ for every $k=j+1,...,i$. Because $j=\max \W_A(y)$, we have $\rank p_j>\rank q_j$. Again, we may assume that $\rank p_j-\rank q_j>\frac{\dim (A)-1}{2}$ (if this is not true, then we can take direct sums $p_j\oplus p_j\oplus...\oplus p_j$ and $q_j\oplus q_j\oplus...\oplus q_j$ as large needed to achieve this). By using \cite[Prop 4.2]{phillips2007recursive} for $p_j,q_j, S^{n_j-1}\subset D^{n_j}$ and the partial isometry $\phi_{j-1}(\bar v_{j-1})\in M_{\infty}(C(S^{n_j-1}))$, we get that there exists a partial isometry $v_j\in M_{\infty}(C(D^{n_j}))$, such that $\widetilde{v}:=(\bar v_{j-1},v_j)\in M_{\infty}(A_j)$ and also satisfies
$$\widetilde{v}\widetilde{v}^*<\bar p_j \text{ and } \widetilde{v}^*\widetilde{v}=\bar q_j.$$
Let $w:=(\bar v_{j-1},v_j,v_{j+1},...,v_i)\in M_{\infty}(A_{i})$. Then
$$ww^*<\bar p_{i} \text{ and } w^*w=\bar q_{i}$$
so $y_{i}>0$.

\end{proof}
\end{lemma}

We are left to deal with the case $\W_A(y)=\emptyset$. On our next lemma, we will show that this case (under mild extra assumptions) leads to a direct sum decomposition with nice properties. But first we need to recall some notation from \cite[Ch. 1.5]{rordam2002classification}. \par
Let $E$ be a unital and stably finite $C^*$-algebra. Let $I\leq K_0(E)$ and $I^+=K_0(E)^+\cap I$. We say that $(I,I^+)$ is an \textbf{ideal} in $(K_0(E),K_0(E)^+)$ if:
\begin{enumerate}[label=\roman*.]
    \item $I=I^+-I^+$ and
    \item for all $x,y\in K_0(E)$, if $0\leq x\leq y$ and $y\in I^+$, then $x\in I^+$.
\end{enumerate}
If $S\subset K_0(E)^+$ is a subsemigroup and $(I,I^+)$ is an ideal, we say that $(I,I^+)$ is \textbf{generated by S} if $I^+=\{a\in K_0(E)^+ \hspace{3mm} | \hspace{3mm} 0 \leq a\leq b \text{ for some }b\in S\}.$ \par

\begin{lemma} \label{basic_order_ideal_lemma}
Let $F$ be a finite dimensional $C^*$-algebra and $\Gamma \subset K_0(F)^+$ be a subsemigroup. Assume that $(I,I^+)$ is the ideal of $(K_0(F),K_0(F)^+)$ generated by $\Gamma$. Then there exist $a\in \Gamma$ such that $(I,I^+)$ is generated by $a$.
\begin{proof}
First of all, $F$ is a direct sum of matrix algebras, so $(K_0(F),K_0(F)^+)\cong (\Z^n,\Z_+^n)$ for some $n$. Note that $I$ must be of the form $$I=\{(b_1,...,b_n)\vert b_j=0 \text{ for every }j\in J\}$$ with $J$ being some subset of $\{1,2,...,n\}$. Moreover, for every $j\notin J$, there is $a^{(j)}=(a^{(j)}_1,...,a^{(j)}_n)\in \Gamma$ with the property that $a_j^{(j)}>0.$ Take $a=\sum_{j\notin J} a^{(j)}$ and observe that this is our generator.
\end{proof}
\end{lemma}

\begin{lemma}\label{empty_case}
Let $A$ be a NCCC with length $l>0$ and decomposition as in (6.1). Let also $y=[p]_0-[q]_0$ almost positive, such that $\W_A(y)=\emptyset$ and assume that there exists $x_0\in X_0$ such that $\rank p_0(x)>\rank q_0(x)$. Then there exist $F_1, F_2$ finite dimensional $C^*$-algebras with $F_0=F_1\oplus F_2$  that satisfy the following properties:
\begin{enumerate}[label=\roman*.]
    \item $A=F_1\oplus B$, where $B=[...[F_2\oplus_{C(S^{{n_1}-1},M_{r_1})} C(D^{n_1},M_{r_1})]\oplus_{C(S^{{n_2}-1},M_{r_2})}C(D^{n_2},M_{r_1})...]\oplus_{C(S^{{n_l}-1},M_{r_l})}C(D^{n_l},M_{r_l})$ is a NCCC.
    \item  $\Gamma_2:=\{a\in K_0(F_2)^+: (a,0,...0)\in \sigma_*(K_0(B))\}=\{0\}.$

\end{enumerate}
\begin{proof}
Define
\begin{equation}
\label{*******}
\Gamma:=\{a\in K_0(F_0)^+: (a,0,...0)\in \sigma_*(K_0(A))\}
\end{equation}
Note that $\Gamma$ is a semigroup and $\Gamma\neq \{0\}$ by assumption.
Let $(I,I^+)$ be the ideal generated by $\Gamma$. By Lemma \ref{basic_order_ideal_lemma}, there exists $a\in \Gamma$ such that $(I,I^+)$ is generated (as an ideal) by $a$. 
\par
It is known (see \cite[Prop. 1.5.3]{rordam2002classification}) that there exists $F_1$ ideal of $F_0 \text{ such that } (K_0(F_1),K_0(F_1)^+)\cong (I,I^+)$. But in finite dimensional $C^*$-algebras, ideals are always summands, so there exists $F_2$ such that
\begin{equation}
\label{6.5}
F_0=F_1\oplus F_2.
\end{equation}
Let $z=[f]_0-[g]_0$, where $f=(f_0,...,f_l)$ and $g=(g_0,...,g_l)\in P_{\infty}(A)$ such that $\sigma_*(z)=(a,0,...,0)$ (such $z$ exists because $a\in \Gamma)$. Then
\begin{equation}
    \label{6.6}
    \rank (f_i)=\rank (g_i) \text{ for every }i\geq 1.
\end{equation}
We have that $[f_0]_0-[g_0]_0=a=[h]_0$ for some $h\in P_{\infty}(F_0)$. It follows that

$$[f_0]_0=[h\oplus g_0]_0\Rightarrow[\phi_0(f_0)]_0=[\phi_0(h\oplus g_0)]_0\Rightarrow \rank  \phi_0(f_0)=\rank  \phi_0(h\oplus g_0)\Rightarrow$$ $$\Rightarrow \rank (f_1)=\rank (\phi_0(h))+\rank (g_1)\xRightarrow{(6.7)}\phi_0(h)=0\Rightarrow \phi_0(1_{F_1})=0\Rightarrow (1_{F_1},0)\in A_1$$
${\text{ where we used the facts that } [h]=a \text{ and }(I,I^+) \text{ is generated by }a}$ to get the second to last equation. \par
We will use induction to show that $\phi_i(1_{F_1},0,...,0)=0$ for every $i$. We have already shown it for $i=0$. Assume that it holds for $i-1$. We will show it for $i$. \par
We have:
$$[(f_0,...,f_i)]_0-[(h\oplus g_0,g_1,...,g_i)]_0\in \Inf K_0(A) \Rightarrow$$
$$[\phi_i(f_0,...,f_i)]_0-[\phi_i(h\oplus g_0,g_1,...,g_i)]_0\in \Inf K_0(C(S^{n_{i+1}-1}, M_{r_{i+1}})) \Rightarrow$$
$$ \rank  \phi_i(f_0,f_1,..,f_i)=\rank  \phi_i(h\oplus g_0,g_1,...,g_i)\Rightarrow$$ $$\rank (f_{i+1})=\rank (\phi_0(h,0,...,0))+\rank (g_{i+1})\Rightarrow \phi_i(h,0,...,0)=0\Rightarrow$$ $$ \phi_i(1_{F_1},0,...,0)=0.$$
We used Lemma \ref{infinitesimals_after_homomorphisms} and the fact that if $x=[p]-[q]\in \Inf K_0(C(X))$, then $\rank(p)=\rank(q)$. This follows immediately from (2.6). Finally, note that because of the induction hypothesis everything is well-defined. \par
So, we have
\begin{equation}
    \label{6.7}
    A \cong F_1\oplus B
\end{equation}
where $B=[...[F_2\oplus_{C(S^{{n_1}-1},M_{r_1})} C(D^{n_1},M_{r_1})]\oplus_{C(S^{{n_2}-1},M_{r_2})}C(D^{n_2},M_{r_1})...]\oplus_{C(S^{{n_l}-1},M_{r_l})}C(D^{n_l},M_{r_l})$ is a NCCC. The isomorphism is via the natural map and the relation $\phi_i(1_{F_1},0,...,0)=0$ for every $i$ guarantees that everything is well-defined. Moreover, let
$$\Gamma_2=\{a\in K_0(F_2)^+: (a,0,...0)\in \sigma_*(K_0(B))\}.$$
We will show that $\Gamma_2=\{0\}.$ Indeed if $a\in K_0(F_2)^+\backslash \{0\}$ such that $\sigma_*^{(B)}(x)=(a,0,...,0)$ for some $x\in K_0(B)$, then
$$\sigma_*^{(A)}(0,x)=((0,a),0,...,0)\Rightarrow (0,a)\in \Gamma \subset I^{+} \subset K_0(F_1)$$
which clearly cannot happen.

\end{proof}
\end{lemma}

\begin{proposition}\label{main_technical_result}
Let $A$ be a NCCC and $y=[p]_0-[q]_0\in K_0(A)$ almost positive. There exists $R=R_y(A)$ a NCCC and $\Psi=\Psi_y^{(A)}:A\rightarrow R_y(A)$ unital *-homomorphism that has the following properties:
\begin{enumerate}[label=\roman*.]
    \item If $(\Psi_y^{(A)})_*(y)\geq 0$, then there exists $M>0$ such that $My\geq 0.$
    \item $R_y(A)=F_1^{y,A}\oplus B_y(A)$, where $F_1^{y,A}$ is a finite dimensional $C^*$-algebra and $B_y(A)$ is a NCCC. Moreover, if $(\Psi_y^{(A)})_*(y)=(y^{(1)},y^{(2)})$, then $y^{(1)}\geq 0$ and $y^{(2)}\in \Inf K_0(B_y(A))$. (It has to be noted that we allow any of the summands to be 0)
    \item For given $A$,
    $$ \vert \{B_y(A) \hspace{2mm} \vert \hspace{2mm} y \text{ is almost positive }\} \vert\leq 2^n \vert \{R_y(A) \hspace{2mm} \vert \hspace{2mm} y \text{ is almost positive }\} \vert =$$
    $$=2^n\vert \{\Psi_y^{(A)} \hspace{2mm} \vert \hspace{2mm} y \text{ is almost positive }\} \vert  <\infty,$$ where $n$ is the number of disjoint singletons in $X_0$. 
\end{enumerate}
\begin{proof}
We will define $R_y(A),\Psi_y^{(A)}$ with induction on the length of $A$. \par
If the length of $A$ is zero, then define $R_y(A)=A$ and $\Psi_y^{(A)}=id$ for every $y$. Properties (i)-(iii) hold trivially. \par
Suppose that we have defined $R_y(L), \Psi_y^{(L)}$ for every $L$ NCCC with length less or equal than $l-1$ and for every $y\in K_0(L)$ almost positive. Let $A$ be a NCCC with length $l$ and $y=[p]_0-[q]_0\in K_0(A)$ almost positive. We have four cases:\par

\textbf{Case 1:} $\rank p_l>\rank q_l$. \par
Denote $\pi:A\rightarrow A_{l-1}$ to be the projection to the $l-1$ th stage algebra. Set $$R_y(A)=R_{\pi_*(y)}(A_{l-1}) \text{ and } \Psi_y^{(A)}=\Psi_{\pi_*(y)}^{A_{l-1}}\circ \pi$$
Note that the length of $A_{l-1}$ is $l-1$, so everything is well-defined by induction hypothesis. \par
Assume that $(\Psi_y^{(A)})_*(y)\geq 0$. This means that $$(\Psi_{\pi_*(y)}^{A_{l-1}}\circ \pi)_*(y)\geq 0.$$
By induction hypothesis there exists $N\in \N$ such that $$N\pi_*(y)\geq 0.$$

Let now $M\in\N: N\vert M$ and $\dim (A)<M$. After replacing $p,q$ with $p\oplus 1_s$ and $q\oplus 1_s$ for large enough $s$, we may assume that $$ \bar p_{l-1}^M:=\bar p_{l-1}\oplus... \oplus \bar p_{l-1} \succeq \bar q_{l-1}\oplus... \oplus \bar q_{l-1}:=\bar q_{l-1}^M \text{ (each summand is taken M times) }$$
So there is a partial isometry $v\in M_{\infty}(A_{l-1})$ such that
$$vv^*=\bar q_{l-1}^M \text { and } v^*v\leq \bar p_{l-1}^M$$
Similarly define $$p_l^M:=p_l\oplus... \oplus p_l \text{ and } q_l\oplus... \oplus q_l:=q_l^M$$
Because $\rank p_l>\rank q_l$, $\rank p_l^M-\rank q_l^M\geq M>\frac{\dim (A)-1}{2}$. So the hypothesis of \cite[Prop 4.2]{phillips2007recursive} is satisfied for $p_l^M, q_l^M, S^{n_l-1}\subset D^{n_l}$ and the partial isometry $\phi_{l-1}(v)$. Thus, $\phi_{l-1}(v)$ can be extended to a partial isometry $w$ on $M_{\infty}(C(D^{n_l}))$ such that
$$ww^*=q_l\oplus...\oplus q_l \text{ and } w^*w<p_l\oplus...\oplus p_l$$ Hence, by considering the partial isometry $(v,w)\in M_{\infty}(A)$, we get that $My> 0$, so (i) is satisfied. \par
\vspace{5mm}
\textbf{Case 2:} $\rank p(x)=\rank q(x)$ for every $x$ in the total space. \par
Set $$R_y(A)=A \text{ and } \Psi_y^{(A)}=id.$$
Property (i) holds trivially and Property (ii) holds for $R_y(A)=0\oplus A$ because $y\in \Inf(K_0(A))$ by Lemma \ref{infinitesimals_NCCC}. \par
\vspace{5mm}
\textbf{Case 3:} $\W_A(y)=\emptyset$ and there is $x_0\in X_0$ such that $\rank p_0(x_0)>\rank q_0(x_0)$.\par
Set $$R_y(A)=A \text{ and } \Psi_y^{(A)}=id.$$
By Lemma \ref{empty_case}, $A=F_1\oplus B$, where $F_1$ is a finite dimensional $C^*$-algebra and $B$ is a NCCC. Let $y=(y^{(1)},y^{(2)})$. Because $y$ is almost positive, $y^{(1)}\geq 0.$ Because $\Gamma_2=\{0\}$ ($\Gamma_2$ is as defined in Lemma \ref{empty_case}), it follows that $\sigma_*^{(B)}(y^{(2)})=0$. Thus, Lemma \ref{infinitesimals_NCCC} yields that $y^{(2)}\in \Inf(K_0(B))$. So, (ii) holds. Moreover, (i) holds trivially. \par
\textbf{Case 4:} $\W_A(y)\neq \emptyset$ and $\rank p_l=\rank q_l$. \par
Let $j=\max\W_A(y)$. By assumption, $j<l$. Recall that in (6.4) we defined a map
$$\Phi_{lj}:A\rightarrow \overline{A_{lj}}$$
where $\overline{A_{lj}}$, is as defined in (6.3) and has length $l-1$. Set
$$R_y(A)=\overline{A_{lj}} \text{ and } \Psi_y^{(A)}=\Psi_{(\Phi_{lj})_*(y)}^{(\overline{A_{lj}})}\circ \Phi_{lj}.$$
By induction hypothesis, everything is well-defined. Assume that $(\Psi_y^{(A)})_*(y)\geq 0$. This means that
$$(\Psi_{(\Phi_{lj})_*(y)}^{(\overline{A_{lj}})}\circ \Phi_{lj})_*(y)\geq 0.$$
By induction hypothesis there is $N \in \N^*$ such that $$M(\Phi_{lj})_*(y)\geq 0.$$
Thus by Lemma \ref{lemma_2_gen}, there exists $M'>0$ such that $M'y>0$, as desired. So (i) holds. \par
We will now show that (ii) holds for Cases 1 and 4. \par
In both cases, notice that on the inductive step the cardinality of $\W_A(y)$ decreases by 1. Moreover, the inductive step preserves almost positivity, and as long as the cardinality remains non-zero, the presence in one of these two cases. So, when we reach $R_y(A)$, the cardinality should become zero, which means that we now lie on one of the other two cases. But Property (ii) is about $R_y(A)$ and we have already showed it for cases  2 and 3. So, it holds for Cases 1 and 4 as well.\par

We are left to show Property (iii). \par
Note that for every $y\in K_0(A)$ almost positive, $R_y(A)$ is formed from $A$ after "deleting" the coordinates of the set $\W_A(y)$. But this is done uniquely, so $R_y(A)$ depends only on the elements of $\W_A(y)$. Observe that $\Psi_y^{(A)}$ is completely determined by $R_y(A)$. Note that $R_y(A)=F_1^{y,A}\oplus B_y(A)$ as in Lemma \ref{empty_case}, if $y_0>0$, while $R_y(A)=0\oplus R_y(A)$ if $y_0=0$. By looking at the proof of Lemma \ref{empty_case}, $F_1$ depends only on the ideal $(I,I^+)$. So, $B_y(A)$ is completely determined by the elements of $\W_A(y)$ plus what the ideal $(I,I^+)$ is. But $(K_0(F_0),K_0(F_0)^+)\cong (\Z^n, \Z^n_+)$, where $n$ is the number of disjoint singletons in $X_0$. So, we have $2^n$ choices for $(I,I^+)$ (see the proof of Lemma \ref{basic_order_ideal_lemma}). Hence
$$ \vert \{B_y(A) \hspace{2mm} \vert \hspace{2mm} y \text{ is almost positive }\} \vert\leq 2^n\vert \{R_y(A) \hspace{2mm} \vert \hspace{2mm} y \text{ is almost positive }\} \vert =$$
    $$=2^n\vert \{\Psi_y^{(A)} \hspace{2mm} \vert \hspace{2mm} y \text{ is almost positive }\} \vert\leq 2^{l+n} <\infty.$$
\end{proof}
\end{proposition}
Now we are ready to show that NCCC have the $K_0$-embedding Property.

\begin{proposition}\label{NCCC_K_0_embedding_property}
Let $A$ be a NCCC. Then $A$ has the $K_0$-embedding Property.
\begin{proof}
Let $A$ be a NCCC. For given $y=[p]_0-[q]_0\in K_0(A)$ almost positive, consider the following sequence of maps:
$$\begin{tikzcd}
A \arrow[r, "\Psi_y^{(A)}"] & R_y(A) \arrow[r, "\pi"] & B_y(A) \arrow[r, "\rho"] & D_y(A)
\end{tikzcd}$$
where $\pi:R_y(A)=F_1^{y,A}\oplus B_y(A)\rightarrow B_y(A)$ is the projection to the second coordinate, $\rho$ is the embedding from Corollary \ref{AF-embedding} with respect to (any) $\tau\in T(B_y(A))$ faithful and $D_y(A)$ is an AF algebra. Set
$$h_y^{(A)}:=\rho \circ \pi \circ \Psi_y^{(A)}:A\rightarrow D_y(A).$$

By Proposition \ref{main_technical_result} (Property iii),
\begin{equation}
    \label{6.9}
    \vert \{D_y(A) \hspace{2mm} \vert \hspace{2mm} y \text{ is almost positive }\} \vert = \vert \{h_y^{(A)} \hspace{2mm} \vert \hspace{2mm} y \text{ is almost positive }\} \vert <\infty.
\end{equation}
Denote
$$h_0^{(A)}:=\bigoplus_y h_y^{(A)} \text{ and } D(A)=\bigoplus_y D_y(A)$$
where the sum is over all $y$ that are almost positive. By (6.9) the sums can be taken to be finite if we never count the same summand twice, so we may assume that $D(A)$ is AF. Moreover, set
$$h^{(A)}=\sigma^{(A)}\oplus h_0^{(A)}:A\rightarrow F_0\oplus (\oplus_{i=1}^l C(D^{n_i}, M_{r_i}))\oplus D(A):=E(A).$$
We will show that  $h^{(A)}$ sends singular elements to singular elements. Indeed, suppose that
\begin{equation}
    \label{6.10}
    (h^{(A)})_*(y)>0
\end{equation}
for some $y\in K_0(A)$. Obviously, $y$ has to be almost positive. Moreover,
\begin{equation}
    \label{6.11}
    (h_y^{(A)})_*(y)\geq 0.
\end{equation}
Because of the construction of the embedding $\rho$ and (6.11),
it follows that $$y^{(2)}\notin \Inf K_0(B_y(A))\backslash \{\text{ torsion elements }\}.$$
where $y^{(2)}$ is as in statement of Proposition \ref{main_technical_result}.
But by Proposition \ref{main_technical_result} (Property $ii$),
$$y^{(2)}\in \Inf K_0(B_y(A)).$$
Hence, there is $N\in \N^*$ such that $Ny^{(2)}=0$. Thus
$$N(\Psi_y^{(A)})_*(y)\geq 0.$$
Finally, by Proposition \ref{main_technical_result} (Property $i$), there is $M>0$ such that $My\geq 0$. But, because of (6.10), it can't be $My=0$. Thus, it follows that $My>0$ as desired. \par
The result follows from the fact that $E(A)\in \mathcal{O}$ so it has the $K_0$-embedding Property by Proposition \ref{direct_sums}.
\end{proof}
\end{proposition}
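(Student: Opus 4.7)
The plan is to exploit the machinery of Proposition \ref{main_technical_result} to build, for each almost positive $y\in K_0(A)$, a unital $*$-homomorphism $h_y^{(A)}:A\to D_y(A)$ into an AF algebra $D_y(A)$, with the property that positivity of $(h_y^{(A)})_*(y)$ forces some integer multiple of $y$ to be positive in $K_0(A)$ itself. Assembling these with the standard representation $\sigma^{(A)}$ gives a faithful $*$-homomorphism $h^{(A)}:A\to E(A)$ that sends singular elements to singular elements, at which point Proposition \ref{direct_sums} applied to $E(A)$ finishes the job.

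The auxiliary map $h_y^{(A)}$ is the composition $\rho\circ\pi\circ\Psi_y^{(A)}$, where $\Psi_y^{(A)}:A\to R_y(A)=F_1^{y,A}\oplus B_y(A)$ is supplied by Proposition \ref{main_technical_result}, $\pi$ projects onto the second summand, and $\rho:B_y(A)\hookrightarrow D_y(A)$ is the AF-embedding from Corollary \ref{AF-embedding} associated to any faithful trace on $B_y(A)$. The two essential features to keep in mind are that $(\Psi_y^{(A)})_*(y)=(y^{(1)},y^{(2)})$ with $y^{(1)}\geq 0$ and $y^{(2)}\in\mathrm{Inf}(K_0(B_y(A)))$ (property (2)), and that $\rho_*$ turns every non-torsion infinitesimal into a non-zero singular element of $K_0(D_y(A))$. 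Property (3) of Proposition \ref{main_technical_result} guarantees that, as $y$ ranges over almost positive elements, only finitely many distinct $h_y^{(A)}$ arise, so the construction $h^{(A)}:=\sigma^{(A)}\oplus\bigoplus_y h_y^{(A)}$ lands in a bona fide $C^*$-algebra $E(A)$.

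To verify that $h^{(A)}$ sends singular elements to singular ones, I will assume for contradiction that $(h^{(A)})_*(y)>0$ for some singular $y$ and derive $My>0$ for some $M>0$. Positivity of the $\sigma^{(A)}$-component immediately forces $y$ to be almost positive, so the component $h_y^{(A)}$ is defined and also positive. This yields $\rho_*(y^{(2)})\geq 0$, and combined with the singular property of $\rho_*$ this rules out $y^{(2)}$ being a non-torsion infinitesimal. Since we already know $y^{(2)}$ is infinitesimal, $y^{(2)}$ must be torsion, say $Ny^{(2)}=0$; together with $y^{(1)}\geq 0$ this gives $N(\Psi_y^{(A)})_*(y)\geq 0$, whereupon property (1) of Proposition \ref{main_technical_result} produces some $M>0$ with $My\geq 0$. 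Positivity of $(h^{(A)})_*(y)$ rules out $My=0$, contradicting singularity of $y$.

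Finally, $E(A)=F_0\oplus\bigoplus_{i=1}^l C(D^{n_i},M_{r_i})\oplus D(A)$ is a direct sum of a finite-dimensional algebra, matrix algebras over contractible spaces (which lie in $\mathcal{E}$ by Proposition \ref{connected} and the observation that $\mathcal{E}$ is stable under matrix amplifications), and an AF algebra, so Proposition \ref{direct_sums} applies and gives $E(A)$ the $K_0$-embedding property. Composing $h^{(A)}$ with the resulting embedding of $E(A)$ into a quasidiagonal algebra yields the map needed in Definition 2.2. The main conceptual obstacle, already resolved by Propositions \ref{main_technical_result} and \ref{empty_case}, is handling those almost positive $y$ for which $\sigma^{(A)}_*(y)$ is positive while $y$ itself is singular; everything in the argument above is arranged precisely so that the carefully dismantled structure of the NCCC takes care of this case automatically.
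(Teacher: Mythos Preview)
Your proposal is correct and follows essentially the same route as the paper: build $h_y^{(A)}=\rho\circ\pi\circ\Psi_y^{(A)}$ using Proposition \ref{main_technical_result} and Corollary \ref{AF-embedding}, assemble finitely many of these together with $\sigma^{(A)}$ into $h^{(A)}:A\to E(A)$, and then run the contrapositive argument ($\sigma^{(A)}_*(y)\geq 0\Rightarrow y$ almost positive, $\rho_*(y^{(2)})\geq 0\Rightarrow y^{(2)}$ torsion, hence $My>0$) to see that $h^{(A)}$ preserves singularity, finishing with Proposition \ref{direct_sums}. Your explicit justification that $E(A)\in\widetilde{\mathcal{E}}$ via Proposition \ref{connected} is a nice touch the paper leaves implicit.
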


Assume that there exists $y\in \ker((h^{(A)})_*)$ that is non-torsion and singular. Then $(\sigma_y^{(A)})_*(y)=0$, so $y$ is almost positive. Notice that we are in Case 2 of the proof of Proposition \ref{main_technical_result} so $\Psi_y^{(A)}=id$ and $y$ is a non-torsion infinitesimal. Thus $(h_y^{(A)})_*(y)=\rho_*(y)\neq 0$ because $\rho$ is the map from Corollary \ref{AF-embedding}. This contradicts $y\in \ker((h^{(A)})_*)$. It follows that $h$ sends non-torsion singular elements to non-torsion singular elements. \par 
Note that because the class of subhomogeneous $C^*$-algebras is closed under taking quotients, every ASH algebra can be written as an inductive limit of subhomogeneous $C^*$-algebras with injective connecting maps. So, if we combine Proposition \ref{unitization}, Proposition \ref{NCCC_K_0_embedding_property}, Proposition \ref{ASH_approximation}, Proposition \ref{local_approximations} and Remark \ref{inductive_limit}, we get the result we are aiming for:

 \begin{proposition}\label{ASH_K_0_embedding_property}
 Let $A$ be a separable ASH algebra. Then $A$ has the $K_0$-embedding property.
 \end{proposition}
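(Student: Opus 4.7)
The plan is to string together the four propositions indicated in the sentence preceding the statement: Proposition \ref{unitization}, Proposition \ref{ASH_approximation}, Proposition \ref{NCCC_K_0_embedding_property} and Proposition \ref{local_approximations}. There is essentially no new content; the substantive work has already been carried out.

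First I would reduce to the unital case via Proposition \ref{unitization}: since the unitization of a separable ASH algebra is again a separable, unital ASH algebra (one can unitize each subhomogeneous building block, and a sequential inductive limit with unitized connecting maps remains ASH), it suffices to prove the statement for unital $A$. Next, I would apply Proposition \ref{ASH_approximation} to conclude that such an $A$ is locally approximated by NCCC. The natural class $\CC$ to feed into Proposition \ref{local_approximations} is then the class of all NCCC.

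To verify the hypotheses of Proposition \ref{local_approximations}, I would observe that every NCCC is separable, unital, and subhomogeneous; in particular it is of type $I$ and hence nuclear, and subhomogeneous $C^*$-algebras are residually finite dimensional and therefore quasidiagonal. By Proposition \ref{NCCC_K_0_embedding_property}, every NCCC has the $K_0$-embedding property. Thus $\CC$ is a class of separable, nuclear, quasidiagonal $C^*$-algebras with the $K_0$-embedding property that locally approximates $A$, so Proposition \ref{local_approximations} delivers the $K_0$-embedding property for $A$.

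There is no real obstacle at this stage: the hard part of the argument lies in the preceding construction of the *-homomorphisms $h^{(A)}$ for NCCC $A$ (namely Proposition \ref{main_technical_result} and Proposition \ref{NCCC_K_0_embedding_property}), and in the general local-approximation mechanism (Proposition \ref{local_approximations}). The present proposition is simply the assembly of these pieces together with the ASH-to-NCCC approximation of Elliott--Niu--Santiago--Tikuisis.
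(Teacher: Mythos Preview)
Your proposal is correct and follows exactly the same route as the paper: reduce to the unital case via Proposition~\ref{unitization}, use Proposition~\ref{ASH_approximation} to locally approximate by NCCC, and then apply Proposition~\ref{local_approximations} together with Proposition~\ref{NCCC_K_0_embedding_property}. The only addition you make is to spell out why NCCC are separable, nuclear and quasidiagonal, which the paper leaves implicit.
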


Now Theorem \ref{main-theorem} can be deduced from all the aforementioned. \par
\vspace{10mm}
\textbf{Proof of Theorem \ref{main-theorem}:} \par
Let $A$ and $\mathcal{Y}$ as in the hypothesis. Because of Remark \ref{drop_compacts}, it is enough to show that $A$ has the $K_0$ embedding property. Because of Proposition \ref{UHF-stability} it is enough to show that every $A$ that can be locally approximated by algebras in $\mathcal{Y}$, has the $K_0$-embedding Property. Note that $\mathcal{Y}$ contains only separable, nuclear and quasidiagonal $C^*$-algebras, so by Proposition \ref{local_approximations} it is enough to show that all $C^*$-algebras in $\mathcal{Y}$ have the $K_0$ embedding property. But if $R$ is a NCCC, then by Proposition \ref{NCCC_K_0_embedding_property} and the observation right after its proof, there exists a faithful *-homomorphism $h:R\rightarrow E(R)$ such that $E(R)\in \mathcal{O}$ and $h_*$ sends non-torsion singular elements to non-torsion singular elements. So, Proposition \ref{local_approximations}, Proposition \ref{crossed_products} and Proposition \ref{direct_sums} yield that every $C^*$-algebra in $\mathcal{Y}$ has the $K_0$-embedding Property. \qed

\vspace{5mm}

\begin{appendix}
\section{}

For the sake of completion, we will prove the following proposition, which is mentioned (without proof) on \cite[p. 84]{blackadar1998k}. This proposition is essential for the proof of Proposition \ref{crossed_products}.
\begin{proposition}\label{appendix_main}
Let $A$ be a unital and separable $C^*$-algebra, $\sigma:\Z\rightarrow Aut(A)$ be an action and $\tau\in T(A)$ a $\sigma$-invariant trace. Then every trace extending $\tau$ induces the same state in $K_0(A\rtimes_{\sigma} \Z)$.
\end{proposition}

Let $E:A\rtimes_{\sigma} \Z \rightarrow A$ be the conditional expectation that sends $\sum_{g\in \Z}a_g g$ to $a_0$. For every $\sigma$-invariant trace $\tau\in T(A)$ , $\tau \circ E\in T(A\rtimes_{\sigma} \Z)$. Thus invariant traces can always be extended to traces in the crossed product, so the statement of the aforementioned proposition makes sense. \par 
For the definition of the functions $\Delta_{\tau}$ and $\underline{\Delta}_{\,\tau}$ that we will use throughout this appendix, we refer the reader to  \cite[p.378]{10.2307/24714069} and \cite[p. 379]{10.2307/24714069} respectively. Our starting point is the following known proposition.
\begin{proposition}[cf. Prop. 2, \cite{10.2307/24714069}] \label{appendix 1}
Let 
$$\begin{tikzcd}
0 \arrow[r] & J \arrow[r, "i"] & B \arrow[r, "\pi"] & A \arrow[r] & 0
\end{tikzcd}$$ be a short exact sequence of $C^*$-algebras and $\tau\in T(A)$.
Then
$$\begin{tikzcd}
0 \arrow[r] & \hat{\tau} \circ \pi_*(K_0(B)) \arrow[r] & \hat{\tau}(K_0(A)) \arrow[r, "q"] & \underline{\Delta}_{\,\tau}(\ker(i_*))    \arrow[r] & 0
\end{tikzcd}$$
where the first map is the inclusion of the two subgroups of $\R$ and $q$ is the restriction of the quotient map
$$q: \R \rightarrow \R\slash \hat{\tau} \circ \pi_*(K_0(B))$$
to  $\hat{\tau}(K_0(A))$, is a short exact sequence.
Moreover, for every $p\in P_{\infty}(A)$, $q(\tau(p))=\underline{\Delta}_{\,\tau}(-\delta_0[p])$, where $\delta_0$ is the boundary map in K-theory.

\end{proposition}
The first part of the statement is contained on the statement of \cite[Prop. 2]{10.2307/24714069}, while the second one is (explicitly) shown in its proof. \par

Because the result we want to show concerns K-theory of crossed products with the integers, we need to recall the Pimsner-Voiculescu 6-term exact sequence (\cite{10.2307/24713853}).
\begin{theorem} (Pimsner-Voiculescu 6-term exact sequence)\label{P-V}
Let $A$ be a unital $C^*$-algebra and $\sigma: \Z \rightarrow Aut(A)$ be an action. Then there exists a short exact sequence
\begin{equation}
    \label{ap_1}
     \begin{tikzcd}
0 \arrow[r] & A\otimes \K \arrow[r, "\phi"] & T_{\sigma} \arrow[r, "\psi"] & A\rtimes \Z \arrow[r] & 0
\end{tikzcd}
\end{equation}
for a $C^*$-algebra $T_{\sigma}$ (see \cite[2.1]{666} for its definition). Moreover, if $\iota: A\hookrightarrow A\rtimes_{\sigma} \Z$ and $j:A\hookrightarrow T_{\sigma}$ are the natural inclusions, they satisfy $\iota= \psi \circ j$. In addition $j$ induces isomorphisms on both $K_0$ and $K_1$. Furthermore, the following diagram
\begin{equation}
    \label{ap_2}
    \begin{tikzcd}
 K_i(A) \arrow[r, "1-\sigma_*"] \arrow[d, "\beta_*"] & K_i(A) \arrow[r, "\iota_*"] \arrow [d,"j_*"] & K_i(A\rtimes_{\sigma} \Z)  \arrow[d, "id"]  \\
K_i(A\otimes \K) \arrow[r, "\phi_*"] & K_i(T_{\sigma}) \arrow[r, "\psi_*"] & K_i(A\rtimes_{\sigma} \Z)
 \end{tikzcd}
\end{equation}
,where i=0,1 and $\beta: A\hookrightarrow A\otimes \K$ is the natural embedding which yields isomorphisms in K-theory by stability, is commutative. \par
Finally the short exact sequence (\ref{ap_1}) induces the following 6-term exact sequence in K-theory
$$\begin{tikzcd}
K_0(A)\arrow{r}{1-\sigma_*} & K_0(A) \arrow{r}{\iota_*} & K_0(A\rtimes_{\sigma}\Z) \arrow{dr}{\delta_0}\\
&K_1(A\rtimes_{\sigma}\Z)\arrow{ul}{\delta_1} & K_1(A) \arrow{l}{\iota_*} & K_1(A) \arrow {l}{1-\sigma_*}
\end{tikzcd}$$
\end{theorem}
\vspace{8mm}
In order to be precise, we need to mention that $\phi$ and $\psi$ are defined in \cite[Lemma 2.3]{666} and \cite[Lemma 2.4]{666} respectively,  (\ref{ap_1}) is \cite[Prop 2.7]{666} and (\ref{ap_2}) is deduced after combining  \cite[Prop 2.14]{666} with $\iota=\psi \circ j$. \par
Let now $\tau\in T(A)$ be a $\sigma$-invariant trace and $\tau_1\in T(A\rtimes_{\sigma} \Z)$ a trace extending $\tau$. Consider the map
$$\underline{\Delta}_{\,\tau}^{\sigma}: \ker(1-\sigma_*)\leq K_1(A) \rightarrow \R\slash \hat{\tau}(K_0(A))$$ via
$$\underline{\Delta}_{\,\tau}^{\sigma}([u]_1)=\Delta_{\tau}(u\sigma(u^{-1})).$$
Moreover, by commutativity of the diagram \ref{ap_2}, $\hat{\tau}_1 \circ \psi_*(K_0(T_{\sigma}))=\hat{\tau} \circ j^{-1}_*(K_0(T_{\sigma}))=\hat{\tau}(K_0(A))$. The latter holds because $j_*$ is an isomorphism. Furthermore, again by commutativity, $\beta_*(\ker(1-\sigma_*))=\ker(\phi_*)$. \par
By the proof of \cite[Theorem 3]{10.2307/24714069}, $\underline{\Delta}_{\,\tau}^{\sigma}=\underline{\Delta}_{\,\tau_1} \circ \beta_*$  (this is essentially what Pimsner shows on the proof; note slightly different notation).

So, by applying Proposition \ref{appendix 1} to the exact sequence (\ref{ap_1}), we get, for every $p\in P_{\infty}(A\rtimes_{\sigma} \Z)$,
\begin{equation}
\label{ap_3}
q(\tau_1(p))=\underline{\Delta}_{\,\tau_1}(\beta_*([u]_1)=\underline{\Delta}_{\,\tau}^{\sigma}([u]_1).
\end{equation}
for some $u\in U_{\infty}(K_1(A))$.
Notice that $q(\tau_1(p))$ is independent of the trace extension $\tau_1$. Moreover, we have the following exact sequence
\begin{equation}
\label{ap_4}
\begin{tikzcd}
0 \arrow[r] & \hat{\tau}(K_0(A)) \arrow[r] & \hat{\tau}_1(K_0(A\rtimes_{\sigma}\Z)) \arrow[r, "q"] & \underline{\Delta}_{\,\tau}^{\sigma}(\ker(1-\sigma_*))    \arrow[r] & 0
\end{tikzcd}
\end{equation}

Now we can prove the result we are aiming for: \par
\emph{Proof of Proposition \ref{appendix_main}:} Let $A$ be a separable and unital $C^*$-algebra, $\sigma: \Z \rightarrow Aut(A)$ an action, $\tau \in T(A)$ a $\sigma$-invariant trace.
For fixed $x\in K_0(A\rtimes_{\sigma} \Z)$, consider the set
$$L_x:=\{\hat{\tau}_1(x)-\hat{\tau}_2(x) \hspace{3mm} | \hspace{3mm} \tau_1,\tau_2 \text{ extend } \tau\}.$$
Of course $0\in L_x$. Assume that for some $x$, $L_x\neq \{0\}$. Then, there exist $\tau_1,\tau_2$ extending $\tau$ such that $\hat{\tau}_1(x)-\hat{\tau}_2(x) \neq 0.$ By considering convex combinations $w\tau_1+(1-w)\tau_2$, we can see that $L_x$ has to contain an interval around zero, so it has to be uncountable. On the other hand, by (\ref{ap_3}), $q(\hat{\tau}_3(x))=q(\hat{\tau}_4(x))$ for every $\tau_3,\tau_4 \in T(A\rtimes_{\sigma} \Z)$ that extend $\tau$. By the exactness of (\ref{ap_4}), $\hat{\tau}_3(x)-\hat{\tau}_4(x)\in \hat{\tau}(K_0(A))$. Thus $L_x\subset \hat{\tau}(K_0(A))$. But $A$ is separable, hence $\hat{\tau}(K_0(A))$ is countable, contradiction. Hence $L_x=\{0\}$ for every $x$. Proof is complete. \qed

\end{appendix}
\vspace{5mm}
\textbf{Acknowledgements:} I would like to thank my advisor Marius Dadarlat for introducing me to the problem and for making useful comments on earlier drafts of this paper. I would like also to thank Jose Carrion and Chris Schafhauser for pointing out to me how Theorem \ref{lifting_theorem} can be deduced from \cite[Cor. 5.4]{Schafhauser2018SubalgebrasOS}. Finally, I would like to thank the referee for a close reading of the paper and for useful comments.

\vspace{10mm}

\begin{center}

\bibliographystyle{abbrv}
\small

\bibliography{iason}
\end{center}
\emph{Iason Moutzouris: Department of Mathematics, Purdue University, West Lafayette, IN 47907, USA }\par

\emph{Email address: imoutzou@purdue.edu} \par

\end{document}